\newtheoremstyle{nonum}{}{}{\itshape}{}{\bfseries}{.}{ }{\thmnote{#3}}
\newtheorem{thm}{Theorem}[section]
\newtheorem*{thm*}{Theorem}
\newtheorem{cor}[thm]{Corollary}
\newtheorem{lem}[thm]{Lemma}
\newtheorem{prop}[thm]{Proposition}
\newtheorem{exm}[thm]{Example}
\newtheorem{definition}[thm]{Definition}
\newtheorem*{definition*}{Definition}
\newenvironment{manualtheorem}[1]{%
	\manualtheoreminner
}{\endmanualtheoreminner}
\newenvironment{manuallemma}[1]{%
	\manuallemmainner
}{\endmanualtheoreminner}
\newcommand{\innerthmname}{}%
\theoremstyle{definition}
\theoremstyle{nonum}
\theoremstyle{remark}
\newtheorem*{rems*}{Remarks}
\newtheorem{rems}[thm]{Remarks}
\newtheorem{rem}[thm]{Remark}
\newcommand{\R}{\mathbb R}
\newcommand{\RR}{\mathbb R}
\def\A{{\mathcal A}}
\def\L{{\mathcal L}}
\newcommand{\dual}{^\circ}
\newcommand{\ind}{\mathbbm{1}}
\def\sp{{\rm sp }}
\newcommand{\iprod}[2]{\langle #1,#2 \rangle} %
\def\L{{\calL}}
\def\calP{{\mathcal P}}
\def\grad{{\nabla}}
\def\pa{\partial}
\def\dom{{\rm dom}\,}
\def\cvx{{\rm Cvx}}
\def\eps{{\varepsilon}}
\def\dom{{\rm dom}}
\def\conv{{\rm conv}}
\def\A{{\mathcal A}}
\def\R{\mathbb R}
\def\cvx{{\rm Cvx}}
\def\L{{\mathcal L}}
\def \sp#1{\langle #1\rangle}
\def\v{\varphi}
\def\iff{\;\;\Leftrightarrow\;\;}
\def\dom{\text{dom}}
\def \cgrad{\partial^c}
\def \grad{\partial}
\def\moverlay{\mathpalette\mov@rlay}
\def\mov@rlay#1#2{\leavevmode\vtop{%
		\baselineskip\z@skip \lineskiplimit-\maxdimen
		\ialign{\hfil$\m@th#1##$\hfil\cr#2\crcr}}}
\newcommand{\charfusion}[3][\mathord]{
	#1{\ifx#1\mathop\vphantom{#2}\fi
		\mathpalette\mov@rlay{#2\cr#3}
	}
	\ifx#1\mathop\expandafter\displaylimits\fi}
\begin{document}
\title{Optimal measure transportation with respect to non-traditional costs}
\author{S. Artstein-Avidan, S. Sadovsky, K. Wyczesany}
\date{}
\maketitle
\ 
\begin{abstract}
 We study optimal mass transport problems between two measures with respect to a non-traditional cost function, i.e.~a cost $c$ which can attain the value $+\infty$. %
 We define the notion of  $c$-compatibility and strong-$c$-compatibility of two measures, and 
 prove that   if there is a finite-cost plan between the measures then the measures must be $c$-compatible, and if in addition the two measures are strongly $c$-compatible, then there is an optimal plan  concentrated  on a $c$-subgradient of a $c$-class function. This function is the so-called potential of the plan. 

We give two proofs of this theorem, under slightly different assumptions. In the first we utilize the notion of $c$-path-boundedness, showing that strong $c$-compatibility  implies a strong connectivity result for a directed graph associated with an optimal map. Strong connectivity of the graph implies that the $c$-cyclic monotonicity of the support set (which follows from classical reasoning)  guarantees its $c$-path-boundedness, implying, in turn, the existence of a potential. We also give a constructive proof, in the case when one of the measures is discrete. This approach adopts a new notion of `Hall polytopes', which we introduce and study in depth, to which we apply a version of Brouwer's fixed point theorem to prove the existence of a potential in this case.

\end{abstract}

\section{Introduction and results}

The Monge transport problem is concerned with finding a \emph{transport map} moving mass from one probability measure\footnote{All considered measures are Borel measures on Polish spaces, which are
complete, separable metric spaces equipped with their Borel $\sigma$-algebra.} to another, in a way which is efficient with respect to some \emph{cost function}. The most widely studied case of this problem is for the quadratic cost $c(x,y)=\|x-y\|_2^2/2$, for which the Brenier--Gangbo-McCann theorem \cite{Brenier, gangbo-mccann} implies that under mild conditions on the measures involved, optimal transport maps exist and are given by gradients of convex functions. In this work the main emphasis will be on \emph{non-traditional} cost functions, i.e. costs that can attain the value $+\infty$, as this project is motivated by the study of transportation with respect to the so-called \emph{polar cost} given by
\begin{equation}\label{def:polar-cost}
    p(x,y)=-\ln(\sp{x,y}-1),
\end{equation}
where $p(x,y)=+\infty$ if $\iprod{x}{y}\le 1$. This cost function is linked with the polarity transform (see \cite{ArtsteinMilmanHidden, ArtsteinRubinstein}), similarly to the strong connection of the quadratic cost with the Legendre transform. Transportation with respect to polar cost was first considered in \cite{hila}. 

We provide necessary conditions on pairs of measures, together with a cost $c$, for which finite cost plans exist. To this end, we discuss the class of functions connected with a cost, called its $c$-class (see the definition in Section \ref{subsec-ctransform-cclass}). The optimality of a plan is linked with the possibility of finding a ``potential'' for the plan, which is a $c$-class function such that the plan lies on its $c$-subgradient (yet another important notion we discuss in depth, see the definition in Equation \eqref{eq:c-subgradient}).

We will see shortly  that the mere existence of a finite cost plan between two measures $\mu$ and $\nu$ implies that the two measures considered are $c$-compatible, namely that for any measurable set $A$ in the measure space $(X,\mu)$, one has that $\mu(A) \le \nu(\{y: \exists x\in A, \,\,c(x,y)<\infty\})$. This is quite intuitive -- all points (up to measure $0$) in $A$ must be mapped to points in the target space with which they have finite cost. This $c$-compatibility of two measures is thus a necessary condition (for the formal definition of $c$-compatibility see Definition \ref{def:c-comp-finite}, and for the statement of the necessity of this condition see Lemma \ref{lem:pre-strassen}). As an example we will show (see Example \ref{example:ccompatible-not-enough}) that $c$-compatibility is not a sufficient condition for the existence of a finite cost plan.  However,  if a finite cost plan exists,  a slight strengthening of $c$-compatibility condition in which we demand a strict inequality is already sufficient to ensure that the optimal plan has a potential.  We will show later why our notion of ``strong compatibility'' is a very natural strengthening of compatibility, and discuss cases where two measures are $c$-compatible but not strongly $c$-compatible and how this implies that the transport problem is decomposable into sub-problems. 

In this note we only consider symmetric cost functions $c:X\times X\to (-\infty,\infty]$ with $c(x,y) = c(y,x)$,  but to see the difference between the two variables we denote the second copy of $X$ by $Y$ and write $c:X \times Y \to (-\infty,\infty]$.  Our results hold for the non-symmetric case as well, with only minor adjustments. We will also add a lower-bound assumption on the cost $c$ which allows to integrate it and its marginals (see also Example \ref{exm:minfinity}).  We say that $c$ is {\em essentially bounded from below with respect to $\mu$ and $\nu$} if there exist functions $a(x)\in L^1(\mu)$, $b(x)\in L^1(\nu)$ such that $c(x,y)\ge a(x)+b(y)$. For the polar cost  this condition is satisfied if, for example, both measures have finite second moment.

Our main theorem is the following 
(here $\partial^c \varphi$ denotes the $c$-subgradient of $\varphi$, see the definition in equation \eqref{eq:c-subgradient}, and $\Pi(\mu, \nu)$ denotes all transport plans between $\mu$ and $\nu$, see the beginning of Section \ref{sec:background}).

\begin{thm}\label{thm:transport-polar-comp}
	Let $X=Y$ be a Polish space, let $c: X\times Y \to (-\infty, \infty]$ be a continuous and symmetric cost function, essentially bounded from below with respect to probability measures $\mu\in {\mathcal P}(X)$ and $\nu\in {\mathcal P}(Y)$. Assume $\mu$ and $\nu$  are strongly $c$-compatible, namely satisfy that for any measurable $A\subset X$ we have 
	\[ \mu(A) + \nu(\{y\in Y: \forall x\in A, \,\, c(x,y) = \infty \}) <1.\]   
	If there exists \emph{some} finite cost plan transporting $\mu$ to $\nu$, then there exists a  {$c$-class function} $\varphi$ and an optimal transport plan $\pi \in \Pi(\mu, \nu)$ concentrated on $\partial^c \varphi$.
\end{thm}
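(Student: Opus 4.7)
My plan is to follow the classical Rockafellar strategy for building a potential from a $c$-cyclically monotone set, adapted to handle the possibility $c=+\infty$ via the graph-theoretic device hinted at in the introduction.

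First, I would obtain an optimal plan $\pi^*$. The essential lower bound $c\ge a(x)+b(y)$ with $a\in L^1(\mu),\ b\in L^1(\nu)$ together with continuity of $c$ makes $\pi\mapsto\int c\,d\pi$ weakly lower semicontinuous, and $\Pi(\mu,\nu)$ is weakly compact by Prokhorov. Since some finite-cost plan exists, the infimum is finite and attained at some $\pi^*$, whose support $\Gamma_0:=\supp(\pi^*)$ is $c$-cyclically monotone by the standard Schachermayer--Teichmann type argument. The game is then to produce a $c$-class function $\varphi$ with $\Gamma_0\subset\partial^c\varphi$, and the natural candidate is the Rockafellar formula
\[
\varphi(x)=\sup\Bigl\{\bigl(c(x,y_n)-c(x_n,y_n)\bigr)+\sum_{i=0}^{n-1}\bigl(c(x_{i+1},y_i)-c(x_i,y_i)\bigr)\Bigr\},
\]
taken over finite chains $(x_1,y_1),\dots,(x_n,y_n)\in\Gamma_0$ anchored at a fixed $(x_0,y_0)\in\Gamma_0$.

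For finite-valued costs this is automatically well-defined, but here the sum may contain $\infty-\infty$, and even after restricting to \emph{admissible} chains (those with $c(x_{i+1},y_i)<\infty$ throughout) the supremum could be $+\infty$. I would encode admissibility by the directed graph $G$ with vertex set $\Gamma_0$ and an edge $(x_i,y_i)\to(x_{i+1},y_{i+1})$ whenever $c(x_{i+1},y_i)<\infty$, and recast the required finiteness of $\varphi$ as \emph{$c$-path-boundedness} of $G$. The key input from the hypothesis enters at this point: strong $c$-compatibility should imply that $G$ is strongly connected on a $\pi^*$-conull Borel subset $\Gamma\subset\Gamma_0$. The mechanism I expect is a transport-through-a-bottleneck argument: given a downward-closed Borel set $A\subset\Gamma$ of positive $\pi^*$-mass with no outgoing edges, its $X$-projection $A_X$ has $\mu$-mass $\mu(A_X)$, and the set $B:=\{y\in Y:\forall x\in A_X,\ c(x,y)=\infty\}$ of points unreachable from $A_X$ with finite cost must absorb at least $\mu(A_X)$ of $\nu$-mass through $\pi^*$, forcing $\mu(A_X)+\nu(B)\ge 1$ and contradicting strong $c$-compatibility.

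Once $\Gamma$ is known to be strongly connected as a graph, $c$-cyclic monotonicity upgrades strong connectivity to $c$-path-boundedness: any two admissible chains joining $(x_0,y_0)$ to a common endpoint $(x,y)$ differ by a closed admissible cycle whose Rockafellar sum is $\le 0$ by monotonicity, so the supremum defining $\varphi$ is finite on $\pi_X(\Gamma)$. One then verifies that $\varphi$ is a $c$-class function (being, up to $c$-transform, a supremum of functions of the form $c(\cdot,y)+\text{const}$), and the tautological ``append one more link'' computation shows $\Gamma\subset\partial^c\varphi$, so $\pi^*$ is concentrated on $\partial^c\varphi$. The main obstacle I anticipate is the translation of the purely measure-theoretic strong $c$-compatibility hypothesis into the graph-theoretic statement that $G$ is strongly connected on a $\pi^*$-conull set; this demands careful Borel regularity (measurable choice of reachable sets, exhaustion by countably many strongly connected components, handling of $\pi^*$-null exceptional sets), whereas the subsequent combinatorial passage from strong connectivity to $c$-path-boundedness is, by comparison, a clean consequence of cyclic monotonicity.
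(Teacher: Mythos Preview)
Your overall architecture is exactly the paper's: obtain an optimal, $c$-cyclically monotone plan (Theorems~\ref{thm:villani} and~\ref{thm: optimal better 1A}), use strong $c$-compatibility to force the associated directed graph to have a single equivalence class on a $\pi^*$-conull set, upgrade to $c$-path-boundedness via Proposition~\ref{prop:one-class-c-path-bdd}, and invoke Theorem~\ref{thm:path-bdd-potential} (which is precisely the Rockafellar construction you write out). You also correctly flag the strong-connectivity step as the crux.

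However, the mechanism you sketch for that step does not work as written. With $A$ forward-closed (``no outgoing edges'') and $B=\{y:\forall x\in A_X,\ c(x,y)=\infty\}=(A_X)^c$, the transport $\pi^*$ gives only the \emph{compatible} direction $\mu(A_X)+\nu(B)\le 1$, not $\ge 1$: mass originating in $A_X$ must land in $Y\setminus B$, so $\nu(Y\setminus B)\ge\mu(A_X)$. The set $A_X$ is the wrong projection; what the absorbing property actually yields is that $C:=P_X(\Gamma\setminus A)$ and $D:=P_Y(A)$ satisfy $C\times D\subset\{c=\infty\}$, hence $D\subset C^c$, and then $\mu(C)+\nu(C^c)\ge\pi^*(\Gamma\setminus A)+\pi^*(A)=1$. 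But even this corrected version hits a second obstacle: to contradict strong $c$-compatibility you need $0<\mu(C)<1$, and nothing prevents $\mu(C)=1$ when $A_X$ and $(\Gamma\setminus A)_X$ overlap.

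The paper resolves both issues in Proposition~\ref{prop:c-one-equivalence-class} by working not with raw projections but with the $c$-envelope $K:=(P_XH)^{cc}$ of the predecessor set $H$ of a fixed basepoint, and by reformulating strong $c$-compatibility as the positivity statement $\pi\bigl((X\setminus K)\times(Y\setminus K^c)\bigr)>0$ for every $c$-class set $K$ with $\nu(K^c)\neq 0,1$ (Lemma~\ref{c-non-empty}). A density point $(z,w)$ outside $H$ is then shown either to have $z\in K$ (forcing $(z,w)\prec(x,y)$ directly) or to witness $\mu(K)<1$, whereupon Lemma~\ref{c-non-empty} produces a point $(e,f)$ with $e\notin K$ but $f\notin K^c$, and the latter gives a finite-cost link back into $H$---again a contradiction. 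The passage through $K^{cc}$ is what guarantees closedness (so density points behave) and what makes the case split exhaustive; your absorbing-set heuristic captures the spirit but misses this structure.
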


The proof uses results from \cite{paper1} on $c$-path-boundedness, which is a notion that replaces $c$-cyclic monotonicity from the Rockafellar-Rochet-R\"uschendorf result  (see	 \cite{rockafellar,rochet,Ruschendorf1996c}) in the case when the cost is non-traditional.  The $c$-path-boundedness is a necessary and sufficient condition for a set to be included in a $c$-subgradient of a $c$-class function.

Since the initial main interest for us in developing this theory concerned the polar cost, in which case we have a precise form for $c$-subgradients, let us state the relevant theorem, which is almost a direct application of the theorem above, together with some simple analysis of polar-subgradients as performed in \cite{ArtsteinRubinstein}.  By $\cvx_0(\RR^n)$ we denote a class of   lower semi-continuous convex functions from $\R^n$ to $[0,\infty]$ which take the value zero at the origin. By $\A$ we denote the polarity transform on the class $\cvx_0(\RR^n)$, defined in \cite{ArtsteinMilmanHidden} and given in \eqref{def:A-transform}.

	\begin{thm}\label{thm:transpolar}
	Let $X = Y = \RR^n$ and let $\mu,\, \nu\in {\mathcal P}(\RR^n)$ be probability measures with finite second moment, which are strongly $p$-compatible where $p(x,y)=-\ln(\iprod{x}{y}-1)_+$ is the polar cost, that is
	\[\mu(K) + \nu (K^\circ) < 1 \]
	for any convex set $K$ with $\mu(K)\neq 0,1$. Assume further that $\mu$ is absolutely continuous.
	Assume there exists some finite cost plan mapping $\mu$ to $\nu$. 
	Then there exists $\varphi \in \cvx_0(\RR^n)$ such that $\grad \dual \varphi$  is an optimal transport map between $\mu$ and $\nu$, where 
	\[ \grad \dual \varphi(x) = \{ y\in \R^n:\, \varphi(x) \A \varphi(y) = \iprod{x}{y}-1>0\}. \] 
	In particular, for $\mu$-almost every $x$,  the set $\grad \dual \varphi(x)$ is a singleton. 
\end{thm}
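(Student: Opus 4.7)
The plan is to apply Theorem \ref{thm:transport-polar-comp} with $c=p$, and then convert the resulting $p$-class potential into a function in $\cvx_0(\RR^n)$ via the logarithmic correspondence between the $p$-class and $\cvx_0(\RR^n)$ established in \cite{ArtsteinRubinstein}. The first task is to verify that the hypothesis stated for convex sets $K$ with $\mu(K) \notin \{0,1\}$ implies strong $p$-compatibility in the sense required by Theorem \ref{thm:transport-polar-comp}. Given any non-trivial measurable $A \subset \RR^n$, the set $\{y : \forall x \in A, \, p(x,y)=\infty\}$ equals $\{y : \forall x \in A, \, \iprod{x}{y}\le 1\} = A\dual$, which depends only on $K := \overline{\conv(A \cup \{0\})}$; indeed $A\dual = K\dual$ and $\mu(A)\le \mu(K)$. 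Hence when $\mu(K)\notin\{0,1\}$ the stated hypothesis yields $\mu(A) + \nu(A\dual) \le \mu(K) + \nu(K\dual) < 1$. The remaining degenerate cases $\mu(K)\in\{0,1\}$ can be handled by Lemma \ref{lem:pre-strassen}, which already gives $\mu(A) + \nu(A\dual) \le 1$ from the existence of a finite cost plan, combined with the observation that equality is ruled out via an approximation argument applying the strong compatibility assumption to a slightly enlarged convex set whose $\mu$-measure lies in $(0,1)$.

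Once strong $p$-compatibility is verified, Theorem \ref{thm:transport-polar-comp} supplies a $p$-class function $\psi$ and an optimal plan $\pi \in \Pi(\mu,\nu)$ concentrated on $\partial^p \psi$. To rewrite $\psi$ in the desired polar form I would use the identification of the $p$-class with $\{-\ln \varphi : \varphi \in \cvx_0(\RR^n)\}$ from \cite{ArtsteinRubinstein}. Concretely, setting $\psi = -\ln \varphi$, a direct computation gives
\[ e^{-\psi^p(y)} \;=\; \sup_{x} e^{\psi(x)}e^{-p(x,y)} \;=\; \sup_{\varphi(x)>0}\frac{(\iprod{x}{y}-1)_+}{\varphi(x)} \;=\; \A\varphi(y), \]
so $\psi^p = -\ln \A\varphi$, i.e.\ the $p$-transform on the $p$-class coincides with the polarity transform $\A$ on $\cvx_0(\RR^n)$. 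Under this change of variables the equality $\psi(x) + \psi^p(y) = p(x,y)$ defining $\partial^p \psi$ becomes $\varphi(x)\A\varphi(y) = \iprod{x}{y}-1 > 0$, that is, $y \in \grad\dual\varphi(x)$. Thus $\pi$ is concentrated on the polar subgradient of $\varphi$.

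Finally, since $\mu$ is absolutely continuous, I would invoke the $\mu$-a.e.\ single-valuedness of the polar subgradient for $\varphi \in \cvx_0(\RR^n)$, a geometric fact about the polarity transform worked out in \cite{ArtsteinRubinstein} and applied in the transport context in \cite{hila}. This result says that the set of $x$ where $\grad\dual\varphi(x)$ fails to be a singleton is Lebesgue-null, so $\pi$ is carried by the graph of the single-valued selection $x\mapsto \grad\dual\varphi(x)$, which is therefore an optimal transport map.

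The hardest part I expect is this last step. The translation between $p$-class language and polarity is fairly mechanical once the correspondence $\psi \leftrightarrow -\ln\varphi$ is in hand, but a.e.\ single-valuedness of $\grad\dual \varphi$ is not a generic $c$-transform fact. Unlike the classical convex subdifferential, which is a.e.\ single-valued thanks to the a.e.\ differentiability of convex functions, the polar subgradient of a $\cvx_0$-function has a less standard structure because the transform $\A$ is not order-reversing in the familiar Legendre sense. The argument must therefore appeal to a specific differentiability theorem for polarity duals, rather than to generic features of $c$-transforms.
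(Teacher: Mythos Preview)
Your approach matches the paper's: apply Theorem~\ref{thm:transport-polar-comp}, translate the $p$-class potential to $\varphi\in\cvx_0(\RR^n)$ via $\psi=-\ln\varphi$, and then argue that $\partial^\circ\varphi$ is $\mu$-a.e.\ single-valued.

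Two corrections are worth making. First, the compatibility verification you sketch is more elaborate than needed: Lemma~\ref{lem:polar-compatible1} already shows that strong $c$-compatibility need only be checked on $c$-class sets, which for the polar cost are exactly the closed convex sets containing the origin with $K^c=K^\circ$. The vague ``approximation argument'' you propose for the degenerate case $\mu(K)=1$ is neither needed nor obviously workable (enlarging a set of full $\mu$-measure does not lower its measure).

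Second, and more substantively, your last paragraph mischaracterizes the mechanism for single-valuedness. You write that the argument ``must appeal to a specific differentiability theorem for polarity duals, rather than to generic features of $c$-transforms'' and cannot proceed via a.e.\ differentiability of convex functions. The paper does exactly the opposite. Lemma~\ref{lemma: polargrad} (a version of \cite[Lemma~3.3]{ArtsteinRubinstein}) gives the explicit parametrization $y=\dfrac{z}{\iprod{x}{z}-\varphi(x)}$ with $z\in\partial\varphi(x)$, so at any classical differentiability point of the \emph{convex} function $\varphi$ where $\varphi(x)\in(0,\infty)$, the polar subgradient $\partial^\circ\varphi(x)$ is at most a singleton. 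Thus single-valuedness \emph{does} reduce to ordinary a.e.\ differentiability of convex functions; the polar structure enters only through this one algebraic identity. The remaining work is to note that $\mu$ is concentrated on $\{x:\varphi(x)\in(0,\infty)\}$ (because $\pi$ is concentrated on $\partial^\circ\varphi$) and that the boundary of $\dom(\varphi)$ and the non-differentiability set of $\varphi$ are Lebesgue-null, hence $\mu$-null by absolute continuity.
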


We remark that the existence of a potential function for the cost $p$ and other non-traditional costs leads naturally to the question regarding regularity of such potentials (as introduced by Caffarelli in \cite{Caffarelli1992regularity} and developed, among others, by Trudinger and Wang in  \cite{Trudinger2009strict}). In this work we do not pursue this direction, and instead focus on the analysis of the existence of potentials, leaving the question of regularity for future work.

In the second half of the paper we specialize to the case where $\nu $ is discrete. In this case we  give a constructive proof for the existence of a transport map, where the $c$-class function is given as a finite infimum of ``basic functions" (see \eqref{def:basic-func}) associated with the cost. The advantage of this method is that much of the geometry of the problem is revealed. In the proof, we generalize a method used by K.~Ball \cite{Ball2004elementary} for the quadratic cost, where all possible maps are parametrized by a weight vector, and the existence of the required one is shown using Brouwer's fixed point theorem. However, in contrast with the case of the classical quadratic cost function and other traditional costs, when the cost attains infinite values the set of all discrete measures with a given support, to which a measure $\mu$ can be mapped with finite cost, is given by an interesting  polytope which we call the \emph{Hall polytope} of the measure $\mu$. The condition of strong $c$-compatibility corresponds to measures with weight vectors in the interior of the polytope.
We present a thorough study of the structure and geometry of Hall polytopes (which for traditional costs are just simplices), which we use to prove Theorem \ref{thm:discrete-transport} below. An advantage of this method is that we can relax the conditions on the cost function. We do need a condition of 
$c$-regularity for the measure $\mu$ (given in Definition \ref{def:c-regular}), which for 
 the polar cost  is satisfied if, say, $\mu$ is absolutely continuous. 

\begin{thm}\label{thm:discrete-transport}
	Let $X$ be some Polish space   and $Y = \{u_i\}_{i=1}^m$. Assume $c:X\times Y\to (-\infty,\infty]$ is a measurable cost function,  $\mu\in {\mathcal P}(X)$ is $c$-regular   and $\nu= \sum_{i=1}^m \alpha_i \ind_{u_i} \in {\mathcal P}(Y)$.  Assume furthermore, that the intersection
	\[\{x\in X: c(x,u_i) <\infty \} \cap \{x\in X: c(x,u_j) <\infty \}\]
	contains an open set for each pair $u_i,u_j$. If $\mu$ and $\nu$ are strongly $c$-compatible then there exists an optimal transport plan $\pi \in \Pi(\mu, \nu)$ whose graph lies in the  
	$c$-subgradient  $\pa^c \varphi$ of a $c$-class
	function $\varphi: X\to [-\infty, \infty]$. 
\end{thm}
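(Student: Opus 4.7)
\emph{Plan.} The plan is to adapt K.~Ball's Brouwer-based argument for the quadratic cost: parametrize candidate potentials through a weight vector $b = (b_1, \dots, b_m) \in \R^m$, read off a Voronoi-type partition of $X$ from $b$, and select the correct $b$ by a fixed point theorem applied on the Hall polytope. For each $b$, form the $c$-class function
$$\varphi_b(x) = \min_{1 \leq i \leq m}\bigl(c(x,u_i) - b_i\bigr),$$
a finite infimum of basic functions, and the cells
$$V_i(b) = \{x \in X : c(x,u_i) - b_i = \varphi_b(x)\} \subseteq S_i, \qquad S_i := \{x \in X : c(x,u_i) < \infty\}.$$
Any measurable selection sending $x \in V_i(b)$ to $u_i$ gives a plan whose graph lies in $\partial^c \varphi_b$, so it remains only to pick $b$ with $F_i(b) := \mu(V_i(b)) = \alpha_i$ for every $i$. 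The $c$-regularity hypothesis ensures the set on which the minimum in $\varphi_b$ is attained by more than one index is $\mu$-null, so $F$ is continuous on $\R^m$ and invariant under common shifts of the $b_i$.

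Next, I would identify the achievable mass vectors with the \emph{Hall polytope}
$$H(\mu) = \Bigl\{ \alpha \in \Delta^{m-1} :\ \sum_{i \in I} \alpha_i \leq \mu\bigl(\textstyle\bigcup_{i \in I} S_i\bigr)\ \text{for every } I \subseteq \{1,\dots,m\} \Bigr\}.$$
A marriage-type argument identifies $H(\mu)$ as exactly the set of weights admitting a finite-cost plan from $\mu$ onto $\sum_i \alpha_i \ind_{u_i}$, and the strong $c$-compatibility hypothesis is precisely that $(\alpha_i)$ lies in the relative interior of $H(\mu)$: testing strong $c$-compatibility on $A = X \setminus \bigcup_{i \in I} S_i$ turns each tight Hall inequality into the strict inequality $\sum_{i \in I} \alpha_i < \mu(\bigcup_{i \in I} S_i)$. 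I would then show that $F$, modulo common shifts, continuously surjects onto the interior of $H(\mu)$ and in particular hits $\alpha$. Two equivalent implementations: build a continuous self-map of $H(\mu)$ from $F$ composed with a suitable inversion and apply Brouwer's theorem on $H(\mu)$; or maximize the concave functional $\Phi(b) = \sum_i \alpha_i b_i + \int_X \varphi_b \, d\mu$, whose superdifferential equals $\alpha - F(b)$ by an envelope computation, and read off $b^\star$ from the maximizer.

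The main obstacle is the compactness step: ruling out escape to infinity. When $b_i \to +\infty$ for indices $i \in I$ with the remaining coordinates bounded, the cell $V_i(b)$ for $i \in I$ swallows $\bigcup_{i \in I} S_i$ up to the overlap null set, so $\sum_{i \in I} F_i(b)$ converges to the facet value $\mu(\bigcup_{i \in I} S_i)$. Strong $c$-compatibility places $\alpha$ strictly inside every such facet, and the pairwise open-intersection hypothesis on the $S_i$ prevents the limit from collapsing onto a lower-dimensional face; together these inputs rule out divergence and place $b^\star$ in a compact region of parameter space. Once $b^\star$ is fixed, $\varphi := \varphi_{b^\star}$ is the desired $c$-class potential, $\pi$ is the plan supported on any measurable selection of the cells $V_i(b^\star)$, and optimality follows because $\pi$ is concentrated on $\partial^c \varphi$ together with the $c$-class duality developed earlier in the paper.
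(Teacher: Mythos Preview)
Your high-level plan coincides with the paper's: parametrize potentials $\varphi_b=\min_i(c(\cdot,u_i)-b_i)$, read off the mass map $F$ into the Hall polytope, note that strong $c$-compatibility places $\alpha$ in its relative interior, and invoke a Brouwer-type argument to hit $\alpha$. Where you diverge is in the implementation of the surjectivity step, and this is exactly where the paper invests most of its work.

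The paper parametrizes by $t\in\Delta_m$ via $b_i=-\ln t_i$, so that ``escape to infinity'' becomes ``$t\to\partial\Delta_m$''. It then proves a precise boundary description (Proposition~5.9): $H$ sends the face $\partial_I\Delta_m$ into the face $F_I$ of the Hall polytope, and on $F_I$ the image depends only on the $I$-coordinates. This requires the splitting $F_I=\mu(A_I)P_I\times_I\mu(A_I^c)\hat P_I$ (Proposition~5.2) and the intersection lemmas (Propositions~5.5--5.6). Using this structure the paper builds, by recursion on $|I|$, a continuous $R:\partial P\to\partial\Delta_m$ with $R(F_I)\subset\partial_I\Delta_m$ and $R$ depending only on the $I$-coordinates on $F_I$ (Proposition~5.7). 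If some interior $\alpha$ were missed, the composition $R\circ p\circ H$ (with $p$ the radial projection from $\alpha$) would be a continuous self-map of $\Delta_m$ into $\partial\Delta_m$ preserving each facet, contradicting Brouwer. Your phrase ``composed with a suitable inversion'' gestures at this, but the actual construction of $R$---and the face analysis enabling it---is the substance of the proof and is not addressed in your sketch.

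Two further points. First, your variational alternative (maximize $\Phi(b)=\sum_i\alpha_ib_i+\int_X\varphi_b\,d\mu$) is a genuinely different and, under an integrability hypothesis on $c$, cleaner route; but Theorem~1.3 does \emph{not} assume $c$ is essentially bounded below, so $\int_X\varphi_b\,d\mu$ need not be defined, and the envelope/coercivity argument breaks. The paper's topological argument sidesteps this entirely: it never integrates $\varphi_b$, only measures the cells. Second, you misread the role of the open-intersection hypothesis. It is not used to prevent collapse in the limit; strong $c$-compatibility alone gives the strict face inequalities. The hypothesis is used because $\mu$ may still be \emph{degenerate} (it need not charge those open intersections), in which case the face machinery behind $R$ fails. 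The paper handles this by perturbing $\mu$ with small uniform measures on disks inside the pairwise intersections to force non-degeneracy, proving surjectivity for the perturbed problems, and passing to the limit (Lemma~5.11). Your sketch omits this perturbation step.
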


The case where the measures $\mu$ and $\nu$ are $c$-compatible but not strongly so, can be analyzed as well. In this case we can write $\mu = \mu_1+ \mu_2$ and $\nu = \nu_1 + \nu_2$ where $\mu_1(X) = \nu_1(Y)$ (and so $\mu_2(X) = \nu_2(Y)$), where the measures $\mu_1$ and $\mu_2$ are concentrated on disjoint sets, as are $\nu_1$ and $\nu_2$, and in such a way that any finite cost transport plan $\pi \in \Pi (\mu, \nu)$ is given as a sum of $\pi_1 \in \Pi  (\mu_1, \nu_1)$ and  $\pi_2 \in \Pi  (\mu_2, \nu_2)$. 
We illustrate this in Section \ref{sec:decomposition}.

\subsection*{Structure of the paper}
Section \ref{sec:background} is dedicated to gathering all the required definitions and notions and previous results.
In Section \ref{sec:c-compatibility} we discuss the notion of $c$-compatibility and strong $c$-compatibility together with their geometric interpretation. 
 In Section \ref{sec:arbitrary-measures} we prove Theorem \ref{thm:transport-polar-comp}.  
In Section \ref{sec:discrete-pf} we go back to the discrete case and show how one may treat it using some deep structural properties of Hall polytopes, which we establish, proving Theorem \ref{thm:discrete-transport}.
In Section \ref{sec:plansvsmaps} we specialize to the polar cost, showing that for absolutely continuous measure the optimal plan is given by a map.
In Section \ref{sec:decomposition} we discuss the case of measures which are $c$-compatible but not strongly $c$-compatible. 
For completeness an appendix \ref{appendix:c-subg} in which we review $c$-subgradients, with detailed examples and geometric intuition.
 
 \subsection*{Acknowledgment}
 The authors were supported by the 
 European Research Council (ERC) under the European Union’s Horizon 2020
 research and innovation programme (grant agreement No 770127). The second named author is grateful to the Azrieli foundation
 for the award of an Azrieli fellowship.

\section{Background and preliminary observations}\label{sec:background}

\subsection{Transport plans and maps}\label{subsec:transport-defs}

Given two measure spaces $X,\,Y$, a measurable\footnote{When referring to a function on $X \times Y$ as ``measurable'' we assume it is both measurable with respect to the product $\sigma$-algebra and its fibers $f(\cdot,y)$ and $f(x,\cdot)$ are measurable functions on $X$ and $Y$ respectively, for any $x\in X$ and $y\in Y$. } cost function $c:X\times Y \to (-\infty,\infty]$, and probability measures $\mu$ on $X$ and $\nu$ on $Y$, we say that there exists a \emph{\bf $c$-optimal transport map} between them if the following infimum is attained:
	\[\inf_{T} \int_X c(x,T(x))d\mu(x),\]
	where $T:X\to Y$ are measurable \emph{\bf transport maps}, i.e. $\nu(B)=\mu(T^{-1}(B))$ for all  measurable sets $B\subset Y$. We say that there exists a \emph{\bf $c$-optimal plan} between them if the infimum 
	\begin{equation}\label{def:total cost}\inf_\pi \int_{X\times Y} c(x,y)d\pi(x,y) 
	\end{equation}
	is attained, where $\pi\in \Pi(\mu, \nu)$, namely $\pi$ is a probability measure on $X\times Y$ satisfying
	\[\pi(A\times Y)=\mu(A),\ \ \ \pi(X\times B)=\nu(B)\]
	for all measurable sets $A\subset X$ and $B\subset Y$. Every transport map induces a transport plan supported on its graph, while not every plan is induced by a map. We denote the infimum in \eqref{def:total cost}, also called the ``total cost'', by $C(\mu,\nu)$. Due to the Kantorovich Duality Theorem \cite{Kantorovich1942transport, Kantorovich1948problem}, when $c$ is lower semi-continuous, the total cost is equal to
	\begin{equation}\label{thm:KDT} \sup_{\varphi,\psi} \left\{\int_X \varphi d\mu + \int_Y \psi d\nu :\ \varphi \in L_1(X,\mu), \, \psi \in L_1(Y,\nu){\rm ~~~admissible}\right\} \end{equation} 
	where $(\varphi, \psi)$ is called an \emph{\bf admissible pair}, if  $\varphi: X\to [-\infty,\infty]$, $\psi: Y \to[-\infty,\infty]$  satisfy
	\[\forall (x,y)\in X\times Y, \,\,\,\ \varphi(x)+\psi(y) \le c(x,y).\]
In the case where $\varphi = +\infty$ and $\psi = -\infty$ we stipulate $-\infty + \infty = -\infty$, namely in such a case the condition above holds regardless of the value of $c(x,y)$.

\subsection{The $c$-transform}\label{subsec-ctransform-cclass}
Motivated by \eqref{thm:KDT}, for every function $\psi:Y\to [-\infty,\infty]$ one may consider the largest function $\varphi$ for which $(\varphi, \psi)$ is an admissible pair, and vice versa. This gives rise to the {\bf $\mathbf{c}$-transform}, defined by
	\begin{equation}\label{eq:c-transform} 
	\psi^c(x)=\inf_y (c(x,y)-\psi(y)), \end{equation} 
	and
	\begin{equation}\label{eq:c-transform2}\varphi^c(y)=\inf_x (c(x,y)-\varphi(x)).\end{equation} 
	
	\begin{rem}
		
	Here if on the right hand side are infinities of opposite signs, which may occur only if $\psi(y) = \infty$ (as $c\neq -\infty$), we use the opposite convention, namely 
	 $-\infty + \infty = +\infty$, since when the cost $c(x,y)$ is infinite there is no restriction on the sum $\varphi(x) + \psi(y)$. In general one must be careful with sums of opposite side infinities, as there is no obvious ``rule of thumb'' that can apply everywhere. 
	 	 
	Note that for a general cost we may lose the measurability of $\varphi$ when applying the $c$-transform, as well as integrability, even under the assumption that $c$ is measurable in the strong sense we have postulated. When $c$ is continuous, however, this is less of a problem. Also, by truncating the functions and taking limits, the issue of integrability can sometimes be resolved. Nevertheless, one should be extra careful when using \eqref{thm:KDT} for a pair $\varphi, \varphi^c$ when the cost is non-traditional, and in the existing literature it is not always clear for which theorems does the non-traditional case follow from the same proof.  
	
\end{rem}
	
	When $X=Y$ and $c(\cdot,\cdot)$ is symmetric in its arguments the transforms in \eqref{eq:c-transform} and \eqref{eq:c-transform2} coincide. Hence, abusing notation, we use the same notation for both. We define the \textbf{$c$-class} as the image of the $c$-transform $\{\psi^c: \psi : Y \to [-\infty, \infty]\}$, or equivalently, as all the functions $\varphi$ such that $\varphi^{cc}=\varphi$. By definition, any function in the $c$-class is an infimum of \emph{\bf basic functions}, which are functions of the form \begin{equation}
	\label{def:basic-func} c(x)=c(x,y_0)+t
	\end{equation}
	for some $y_0\in Y$ and $t\in \RR$. It is useful to notice that the $c$-class is always closed under pointwise infimum  {(this fact is commonly known and used, see e.g. \cite{ambrosio-pratelli,villani2003topics}, and a simple proof can be found in \cite{kasia-thesis}).}

\subsection{The $c$-subgradient}	
	
Given a function $\varphi$ in the $c$-class, its $c$-subgradient is the subset of $X\times Y$ given by 
\begin{equation}\label{eq:c-subgradient}
	\pa^c\varphi =\{(x,y):\, \varphi(x)+\varphi^c(y)=c(x,y) \,\text{ and } \, c(x,y)<\infty\}.
\end{equation}

To illustrate the relevance of $c$-subgradients to the study of optimal transport, let us present a folklore argument, which can be made precise for traditional costs, and which we only use as motivation but do not claim it holds in general.  
 
In Kantorovich Duality Theorem, recalled as \eqref{thm:KDT} above, one is inclined to replace $\psi$ with the largest admissible partner of $\varphi$ (at least so long as it is measurable and in $L_1(\nu)$), and then replace $\varphi$ by $\varphi^{cc}$. In this sense, one may think of \eqref{thm:KDT} applied only to admissible pairs $(\varphi,\varphi^c)$, where $\varphi=\v^{cc}$ is in the $c$-class. However, for {\em any} $\pi \in \Pi(\mu, \nu)$ and $\varphi$ is in the $c$-class, 
\begin{align*}
 \int_X \varphi d\mu(x)+\int_Y\varphi^c(y) d\nu(y) = \int_{X\times Y}( \varphi(x)+\varphi^c(y)) d\pi(x,y) \le 
 \int_{X\times Y} c(x,y) d\pi(x,y)  
\end{align*} 
So for equality between the left and right hand side to be obtained for some (potential) $\varphi$ and (optimal plan) $\pi$, we see that $\pi$ must be concentrated on the set $\pa^c\varphi$. In other words, finding optimal plans admitting a potential is equivalent to finding {\em some} plan supported on a $c$-subgradient. While this argument is not precise (in particular, we ignored measurability and integrability assumptions, applying \eqref{thm:KDT} to a pair $(\varphi, \varphi^c)$), it constitutes the motivation behind searching for potentials in optimal transport problems.

The above observation shows the importance of the notion of the $c$-subgradient mapping. The name $c$-subgradient is connected to the fact that for the classical cost $c(x,y) = -\iprod{x}{y}$, the $c$-class consists of upper semi-continuous concave functions, the $c$-transform of $-\varphi$ is $-\L(\varphi)$, and the $c$-subgradient of $-\varphi$ at $x$ is the usual subgradient $\partial \varphi(x)$. So as not to disturb the flow of the paper, we gathered some basic facts about the $c$-subgradient, including the geometric intuition behind it, in Appendix \ref{appendix:c-subg}.

\subsection{$c$-cyclic monotonicity and $c$-path-boundedness}

The connection between optimality of a plan and some geometric information on its support is quite intuitive: if a plan is optimal, then we should not gain any profit by interchanging several portions of it. This is the idea behind the well known notion of  $c$-cyclic  monotonicity. Given a cost $c:X\times Y \to (-\infty, \infty]$, a subset $G\subset X\times Y$ is called 
 $c$-cyclically  monotone
if $c(x,y)<\infty$ for all $(x,y)\in G$, and for any $m$, any $(x_i, y_i)_{i=1}^m \subset G$, and any permutation $\sigma$ of $[m] = \{1, \ldots,m\}$ it holds that 
\begin{equation}\label{eq:ccm}  \sum_{i=1}^m c(x_i, y_i) \le \sum_{i=1}^m c(x_i, y_{\sigma(i)}).  \end{equation}

This definition seems to have been first introduced by Knott and Smith \cite{knott-smith}, as a generalization of cyclic monotonicity considered by Rockafellar \cite{rockafellar} in the case of quadratic cost. It is easy to check that if $\varphi$ is a $c$-class function then any set  $G\subset \partial^c\varphi$ is  $c$-cyclically  monotone. The theorems of Rockafellar,  Rochet and R\"uschendorf give the reverse implication, in the case of a traditional cost. Namely, when $c:X\times Y\to \RR$, a set $G\subset X\times Y$ is $c$-cyclically monotone if and only if there exists a $c$-class function such that $G\subset \partial^c \varphi$.

For non-traditional costs, this is no longer the case, and one may construct  $c$-cyclically  monotone sets which admit no potential.  In \cite{paper1}, the corresponding result for non-traditional costs is provided.  Cyclic monotonicity 
has to be replaced by a stronger notion, which we called   $c$-path-boundedness.
\begin{definition}
	Fix sets $X,\,Y$ and $c:X\times Y\to (-\infty, \infty]$. A subset $G\subset X\times Y$ will be called {\bf $c$-path-bounded} if $c(x,y)<\infty$ for any $(x,y)\in G$, and for any $(x,y)\in G$ and $(z,w)\in G$, there exists a constant $M=M((x,y), (z,w))\in \RR$, such that the following holds: For any $m\in {\mathbb N}$ and any $(x_i, y_i)_{i=2}^{m-1} \subset G$, denoting $(x_1, y_1) = (x,y)$ and $(x_m, y_m) = (z,w)$,   we have 
	\[ \sum_{i=1}^{m-1}\left( c(x_i, y_i)-c(x_{i+1}, y_{i})\right) \le M.  \]   
\end{definition}
The fact that a $c$-path-bounded set is also $c$-cyclically monotone is easy to establish (see \cite{paper1}). With this definition the main theorem of \cite{paper1} can be stated.
\begin{thm}\label{thm:path-bdd-potential}
		Let  $X,\,Y$ be sets and let $c:X\times Y\to (-\infty, \infty]$ be   given. A set $G\subset X\times Y$ is    {$c$-path-bounded} if and only if there exists a $c$-class function $\varphi$ such that $G\subset \partial^c \varphi$.
\end{thm}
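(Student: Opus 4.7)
The theorem is an equivalence, and I would handle the two directions separately. The ``only if'' direction is a short calculation: assuming $G\subset\pa^c\varphi$ for some $c$-class function $\varphi$, for every $(x_i,y_i)\in G$ equality $\varphi(x_i)+\varphi^c(y_i)=c(x_i,y_i)$ holds, and admissibility of the pair $(\varphi,\varphi^c)$ gives $\varphi(x_{i+1})+\varphi^c(y_i)\le c(x_{i+1},y_i)$. Subtracting yields
\[
c(x_i,y_i)-c(x_{i+1},y_i) \;\le\; \varphi(x_i)-\varphi(x_{i+1}),
\]
and telescoping along any chain $(x_1,y_1)=(x,y),\ldots,(x_m,y_m)=(z,w)$ in $G$ produces an upper bound of $\varphi(x)-\varphi(z)$, a constant depending only on the endpoints. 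Since $c$ is finite on $\pa^c\varphi$, both $\varphi(x)$ and $\varphi(z)$ are finite, so $M((x,y),(z,w)):=\varphi(x)-\varphi(z)$ witnesses $c$-path-boundedness.

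For the converse, I would perform a Rockafellar-style construction, with the quantitative bound supplied by $c$-path-boundedness replacing cyclic monotonicity. Fix any base point $(x_0,y_0)\in G$ and define
\[
\varphi(x) \;:=\; \inf\Bigl\{\,c(x,y_m) - c(x_m,y_m) + \sum_{i=0}^{m-1}\bigl[c(x_{i+1},y_i)-c(x_i,y_i)\bigr]\,\Bigr\},
\]
the infimum running over all finite chains $(x_0,y_0),(x_1,y_1),\dots,(x_m,y_m)\in G$ starting at the base point. Each term in the infimum has the shape $c(x,y_m)+t$ for a real constant $t$ depending on the chain, so $\varphi$ is an infimum of basic functions and therefore belongs to the $c$-class.

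The two remaining verifications---that $\varphi\not\equiv-\infty$ and that $G\subset\pa^c\varphi$---both hinge on the same bookkeeping trick. For any $(x^*,y^*)\in G$, appending $(x^*,y^*)$ to a chain $C$ as its $(m+1)$-st entry rewrites the expression defining $\varphi$ evaluated at $x^*$ for $C$ as the telescoping sum $\sum_{i=0}^{m}[c(x_{i+1},y_i)-c(x_i,y_i)]$ for the extended chain from $(x_0,y_0)$ to $(x^*,y^*)$; by $c$-path-boundedness this is at least $-M((x_0,y_0),(x^*,y^*))$, so $\varphi(x^*)$ is finite. Evaluating the same extended chain at an arbitrary $x\in X$ and passing to the infimum gives
\[
\varphi(x)\;\le\;c(x,y^*)-c(x^*,y^*)+\varphi(x^*).
\]
Combined with the opposite inequality $\varphi^c(y^*)\le c(x^*,y^*)-\varphi(x^*)$ built into the definition of the $c$-transform, this forces equality $\varphi(x^*)+\varphi^c(y^*)=c(x^*,y^*)$, i.e.\ $(x^*,y^*)\in\pa^c\varphi$.

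The main obstacle is precisely the lower bound $\varphi\not\equiv-\infty$: for a general $G$ the chain-infimum can diverge to $-\infty$, at which point the $c$-subgradient identity becomes meaningless, so quantitative $c$-path-boundedness is doing essential work that $c$-cyclic monotonicity alone cannot. A minor additional subtlety is that one may restrict throughout to chains on which $c$ stays finite, since any chain containing an infinite $c$-value renders the corresponding expression trivially $+\infty$ and so irrelevant to the infimum.
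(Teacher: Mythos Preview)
Your ``only if'' direction is correct: the telescoping bound $M((x,y),(z,w))=\varphi(x)-\varphi(z)$ works exactly as you describe, and finiteness of $\varphi$ on $P_X(G)$ follows from the definition of $\partial^c\varphi$.

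The ``if'' direction, however, has a genuine gap. Your Rockafellar construction from a single base point $(x_0,y_0)$ shows $\varphi(x^*)\ge -M((x_0,y_0),(x^*,y^*))$, but it does \emph{not} show $\varphi(x^*)<\infty$, and this can fail. Take $X=\{a,a'\}$, $Y=\{b,b'\}$, $c(a,b)=c(a',b')=0$ and $c(a',b)=c(a,b')=+\infty$, and $G=\{(a,b),(a',b')\}$. This $G$ is $c$-path-bounded: every chain from $(a,b)$ to $(a',b')$ (or back) contains a step with $c(x_{i+1},y_i)=\infty$, so the path sum is $-\infty\le M$ for any $M$. A potential certainly exists (e.g.\ $\varphi\equiv 0$ on $X$ with $\varphi^c\equiv 0$ on $Y$). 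But with base point $(a,b)$, every chain ending at some $(x_m,y_m)$ with $c(a',y_m)<\infty$ must have $y_m=b'$, hence must at some step jump from $(a,b)$ to $(a',b')$, and that step contributes $c(a',b)-c(a,b)=+\infty$ to your sum. Thus $\varphi(a')=+\infty$, and $(a',b')\notin\partial^c\varphi$.

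The point is that $c$-path-boundedness is a \emph{vacuous} constraint between pairs of points not joined by any finite-cost chain, so a construction anchored at a single base point cannot reach such points. The fix is to abandon the fixed base point: one option is to first solve for real values $\varphi(x^*)$ on $P_X(G)$ satisfying $\varphi(x')-\varphi(x^*)\le c(x',y^*)-c(x^*,y^*)$ for all $(x^*,y^*),(x',y')\in G$ (a system with no negative cycles by $c$-cyclic monotonicity, and with the infinite right-hand sides imposing no constraint), and then set $\varphi(x)=\inf_{(x^*,y^*)\in G}\bigl[c(x,y^*)-c(x^*,y^*)+\varphi(x^*)\bigr]$. In the present paper this theorem is quoted from \cite{paper1} rather than proved, so there is no in-paper argument to compare against; but the gap above is exactly the obstruction that forces the more careful construction carried out there.
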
 

It was also demonstrated in \cite{paper1} that under certain conditions, the notions of $c$-cyclic  monotonicity and $c$-path-boundedness do coincide. One such instance, which will be used in this paper, is explained and formulated in Proposition \ref{prop:one-class-c-path-bdd} in Section \ref{sec:arbitrary-measures}. 

\subsection{Some know results about existence of optimal plans and potentials}\label{subsec:path-bdd-cases}
Having fixed a cost,  the discussion about the structure of an optimal plan naturally splits into several components. The first, which is relevant only when the cost is non-traditional, is the existence of {\em some} finite cost plan (necessary conditions will be discussed in the next section).  Further, one can ask whether an optimal plan exists.  This is the object of the next theorem, which is quoted from Villani \cite{villani-book}.

Recall that $\Pi(\mu, \nu)$ denotes the set of all probability measures on $X\times Y$ whose marginals are $\mu\in \calP (X)$ and $\nu\in \calP (Y)$, and that $c:X\times Y \to (-\infty,\infty]$ is essentially bounded with respect to $\mu$ and $\nu $
if there exist  upper semi-continuous function $a:X\to (-\infty,\infty]$,  $a\in L_1(\mu)$
and $b:X\to (-\infty,\infty]$, $b\in L_1(\nu)$ such that $c(x, y)\ge a(x) + b(y)$ for all
$x\in X,\, y\in Y$.

\begin{thm}\label{thm:villani} 
	Let $X,\, Y$ be two Polish spaces, let $\mu \in \calP (X)$ and $\nu \in \calP (Y)$. Let  $c:X\times Y \to (-\infty,\infty]$ be a lower 
	semi-continuous cost function which is essentially bounded with respect to $\mu  $ and $\nu$. Then there exists a $c$-optimal plan $\pi \in \Pi(\mu,\nu)$.
\end{thm}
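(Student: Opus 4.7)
The plan is the direct method of the calculus of variations: show that $\Pi(\mu,\nu)$ is sequentially compact in the weak topology on probability measures on $X \times Y$, show that $\pi \mapsto \int c\, d\pi$ is weakly lower semi-continuous on this set, and then extract a minimizer from any minimizing sequence.

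First I would establish sequential compactness of $\Pi(\mu,\nu)$. Since $X$ and $Y$ are Polish, each of $\mu$ and $\nu$ is tight: given $\varepsilon > 0$ choose compact $K_X \subset X$ and $K_Y \subset Y$ with $\mu(X \setminus K_X), \nu(Y \setminus K_Y) < \varepsilon/2$. For any $\pi \in \Pi(\mu,\nu)$ the marginal identities give
\[ \pi\bigl((X \times Y) \setminus (K_X \times K_Y)\bigr) \le \mu(X \setminus K_X) + \nu(Y \setminus K_Y) < \varepsilon, \]
with $K_X \times K_Y$ compact; this is tightness of $\Pi(\mu,\nu)$, and Prokhorov's theorem yields sequential relative compactness. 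Weak closedness follows immediately: for any bounded continuous $f$ on $X$, the functional $\pi \mapsto \int f(x)\, d\pi(x,y)$ is weakly continuous, so the first marginal of any weak limit of plans is $\mu$, and symmetrically for the second marginal. Thus $\Pi(\mu,\nu)$ is sequentially compact (and nonempty, since it contains $\mu \otimes \nu$).

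Next I would prove weak lower semi-continuity of the cost functional. Invoking the essential boundedness hypothesis, set $\tilde c(x,y) := c(x,y) - a(x) - b(y)$, which is non-negative and lower semi-continuous on $X \times Y$ (since $c$ is LSC and $a, b$ are USC). For any $\pi \in \Pi(\mu,\nu)$,
\[ \int c\, d\pi = \int \tilde c\, d\pi + \int a\, d\mu + \int b\, d\nu, \]
and the last two terms are constants depending only on $\mu,\nu$, so it suffices to show that $\pi \mapsto \int \tilde c\, d\pi$ is weakly lower semi-continuous. On a metric space every non-negative LSC function is the pointwise supremum of an increasing sequence of bounded non-negative continuous (even Lipschitz) functions $f_k$; by monotone convergence $\int \tilde c\, d\pi = \sup_k \int f_k\, d\pi$, which is an upward supremum of weakly continuous functionals, hence weakly lower semi-continuous.

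Combining these ingredients, if $C(\mu,\nu) = +\infty$ every $\pi \in \Pi(\mu,\nu)$ is trivially optimal; otherwise pick a minimizing sequence $\pi_n$, extract a weakly convergent subsequence $\pi_{n_k} \to \pi^* \in \Pi(\mu,\nu)$ by compactness, and conclude $\int c\, d\pi^* \le \liminf_k \int c\, d\pi_{n_k} = C(\mu,\nu)$ from lower semi-continuity. The delicate step is the lower semi-continuity: one must first absorb the essential lower bound into $c$ to reduce to a non-negative integrand before approximating from below by bounded continuous functions. The possibility that $c$ takes the value $+\infty$ causes no real trouble, because after the shift by $a+b$ the resulting extended-real LSC integrand is still approximable from below by ordinary continuous functions via the standard Lipschitz-regularization on metric spaces.
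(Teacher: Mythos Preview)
Your argument is correct and is the standard direct-method proof via Prokhorov's theorem and monotone approximation of the shifted cost. Note, however, that the paper does not actually prove this theorem: it is stated as a quotation from Villani's book \cite{villani-book} and used as a black box, so there is no ``paper's own proof'' to compare against. Your proof is essentially the one given in that reference.
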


Let us note that,  in the above theorem,  the existence of a plan with finite total cost is not assumed as when no finite cost plan exists, any plan (say,  $\mu \otimes \nu$) is optimal in a trivial sense.  Further,  a simple example demonstrates that without some kind of assumption on boundedness from below of the cost,  the total cost may be $-\infty$, and in this case optimal measures can be concentrated on sets which are far from being $c$-cyclically monotone. 
\begin{exm}\label{exm:minfinity}
	Let $p(x,y)=-\ln(xy-1)_+$ be the polar cost on $\R_+\times \R_+$. Let $\mu$ be a discrete probability measure on $\R_+$ given by $\mu=\sum_{n=2}^\infty \alpha_n \ind_n$, where $\alpha_n$ are such that $\sum_{n=2}^\infty \alpha_n=1$ and $\sum_{n=2}^\infty n^{3/2} \alpha_n  =\infty$. Consider transport plans of $\mu$ to itself, namely $\Pi(\mu, \mu)$.
	
	WE claim that in this case, the identity map $x\mapsto x$ is a transport plan whose total cost is $-\infty$ (in particular, it is optimal) but it is not supported on a $p$-cyclically monotone set.   Indeed, consider the measure $\pi_\mu$ on the diagonal whose projection is $\mu$. Its total cost is 
	\[\sum_{n=2}^\infty -\ln(n^2-1)\alpha_n \le -\sum_{n=1}^\infty n^{3/2} \alpha_n =-\infty.
	\]
	Clearly even for two points $(x_1, y_1) = (2,2)$ and $(x_2, y_2) = (3,2)$ it holds that 
	\[ -\ln (2\cdot 2 - 1) - \ln (3\cdot 3 -1)   =-\ln (24) > 
	-\ln (2\cdot 3 -1) - \ln (3\cdot 2-1) = -\ln (25). 
	\] 
	We thus see that an optimal plan (albeit with negative infinity cost) may have support which is not $c$-cyclically monotone.
\end{exm}

Analysing the geometric structure of an optimal plan,  after showing its existence,  is a problem which has a long history.  After Brenier \cite{Brenier}, following R\"uschendorf \cite{Ruschendorf1996c} determined the classical structure of cyclic monotonicity of optimal plans, Gangbo and McCann \cite{gangbo-mccann}	 extended the result to lower semi-continuous cost functions bounded from below. They showed that every finite optimal plan with respect to such costs lies on a $c$-cyclically monotone set.  Beiglböck, Goldstern, Maresch, and Schachermayer \cite{Schachermayer-optimal-and-better} generalised the result further by removing regularity assumptions on the cost:
\begin{thm}[See {\cite[Theorem 1.a]{Schachermayer-optimal-and-better}}]\label{thm: optimal better 1A}
	Let $X, \,Y$ be Polish spaces equipped with Borel probability measures $\mu, \nu$ and let $c : X \times Y \to [0, \infty]$ a Borel measurable cost function. Then every finite optimal transport plan is $c$-cyclically monotone.
\end{thm}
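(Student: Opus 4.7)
The plan is to argue by contradiction: starting from a finite optimal transport plan $\pi$ which fails to be $c$-cyclically monotone, I would produce a strictly cheaper competitor $\pi' \in \Pi(\mu,\nu)$ via a swap construction. Concretely, if $\pi$ is not concentrated on any $c$-cyclically monotone Borel set, then for some $m \ge 2$ the Borel set
\[ B_m = \Bigl\{((x_i,y_i))_{i=1}^m \in (X\times Y)^m : \exists \sigma \in S_m \text{ with } \textstyle\sum_i c(x_i,y_{\sigma(i)}) < \sum_i c(x_i,y_i)\Bigr\} \]
has $\pi^{\otimes m}(B_m)>0$. Decomposing $B_m$ according to which $\sigma$ achieves the strict inequality and by which amount, and intersecting with the level set $\{\max_{i,j} c(x_i,y_j) \le N\}$ for some large $N$, I pass to a Borel subset $B \subset B_m$ of positive $\pi^{\otimes m}$-measure on which a single permutation $\sigma$ yields a cost improvement of at least some fixed $\varepsilon > 0$ and on which all relevant costs are bounded.

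Next I build the competitor. For a small $\lambda > 0$, set $\alpha = \lambda\, \mathbf{1}_B\, \pi^{\otimes m}$, let $\alpha_i$ denote the projection of $\alpha$ onto the $i$-th copy of $X \times Y$, and let $\beta_i$ denote the image of $\alpha$ under the map $((x_j,y_j))_{j=1}^m \mapsto (x_i,y_{\sigma(i)})$. Setting
\[ \pi' = \pi - \frac{1}{m}\sum_{i=1}^m \alpha_i + \frac{1}{m}\sum_{i=1}^m \beta_i, \]
one checks directly that the $X$-marginals of $\alpha_i$ and $\beta_i$ agree, while $\sum_i \beta_i$ and $\sum_i \alpha_i$ have the same $Y$-marginal because $\sigma$ is a permutation. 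Hence if $\pi' \ge 0$ then $\pi' \in \Pi(\mu,\nu)$, and because all involved costs are bounded by $N$ on $B$ there is no indeterminate $\infty - \infty$ in the cost difference, which equals
\[ \int c\, d\pi' - \int c\, d\pi = \frac{\lambda}{m}\int_{B}\Bigl(\textstyle\sum_i c(x_i,y_{\sigma(i)}) - \sum_i c(x_i,y_i)\Bigr) d\pi^{\otimes m} \le -\frac{\lambda\varepsilon}{m}\pi^{\otimes m}(B) < 0, \]
contradicting optimality of $\pi$.

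The main obstacle is ensuring $\pi' \ge 0$, as a priori $\alpha_i$ need not be dominated by any multiple of $\pi$. To overcome this I would further restrict $B$ to a Borel subset on which the Radon--Nikodym derivative $d\widehat\alpha_i/d\pi$ is uniformly bounded by some constant $K$ for each $i$, where $\widehat\alpha_i$ denotes the $i$-th marginal of $\mathbf{1}_B\, \pi^{\otimes m}$; choosing $\lambda \le 1/K$ then yields $\frac{1}{m}\sum_i \alpha_i \le \pi$ and hence $\pi' \ge 0$. Making these successive reductions rigorous, in particular ensuring that all the intermediate sets remain Borel and that $\pi^{\otimes m}(B)$ stays positive after each restriction, is the technical heart of the argument and is exactly where the Beiglböck--Goldstern--Maresch--Schachermayer proof invokes a delicate disintegration and measurable selection, needed because no semi-continuity of $c$ is assumed. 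Once this step is accomplished, the swap construction above produces the required contradiction and the theorem follows.
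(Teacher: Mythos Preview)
The paper does not give its own proof of this statement: Theorem~\ref{thm: optimal better 1A} is quoted from \cite{Schachermayer-optimal-and-better} and used as a black box in the proof of Theorem~\ref{thm:transport-polar-comp}. So there is no ``paper's proof'' to compare against; what you have written is a sketch of the argument from the cited reference itself.

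As such a sketch, your outline is faithful to the Beiglb\"ock--Goldstern--Maresch--Schachermayer strategy: assume $\pi$ fails to be $c$-cyclically monotone, locate via $\pi^{\otimes m}$ a positive-measure set of ``bad'' $m$-tuples, and perform a swap to build a cheaper competitor. You also correctly isolate the genuine difficulty, namely guaranteeing $\pi'\ge 0$ without any continuity of $c$, and you rightly point to disintegration and measurable-selection as the tools that resolve it. Two small remarks: first, the passage from ``$\pi$ is not concentrated on any $c$-cyclically monotone Borel set'' to ``$\pi^{\otimes m}(B_m)>0$ for some $m$'' is itself a (short) Fubini-type argument that deserves a line of justification; second, your proposed fix for positivity---restricting $B$ so that each marginal density $d\widehat\alpha_i/d\pi$ is bounded---is morally right but is exactly the step whose rigorous execution in \cite{Schachermayer-optimal-and-better} is nontrivial, so calling it ``the technical heart'' and then deferring to that paper is appropriate rather than a gap on your part.
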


The reverse implication, that $c$-cyclic monotonicity implies optimality,  is not true in general as shown in Example 3.1 in \cite{ambrosio-pratelli}.  In \cite{Schachermayer-optimal-and-better} Theorem 1.b,  it was shown that for a measurable cost function $c$ the assumption that the ``infinity'' set $\{(x, y) : c(x, y) = \infty\}$ is a union of a closed
set and a $\mu\otimes\nu$-null set, implies that every finite $c$-cyclically monotone plan is optimal. 

Finally,  the question of the existence of a potential for the optimal plan remains.  A result in this direction was presented in \cite{Schachermayer-optimal-and-better}; it states that, with assumptions as in Theorem \ref{thm: optimal better 1A}, a finite cost plan admits a potential if and only if it is ``robustly optimal'' (see Definition 1.6. in \cite{Schachermayer-optimal-and-better}).  In particular,  their result implies that a plan which admits a potential is optimal.  In this note, our main goal is to find conditions on the pairs of measures that guarantee the existence of a potential for the optimal transport plan between them, thus guaranteeing, in fact,  robust optimality.

\section{Compatibility}\label{sec:c-compatibility}

Given two probability measures, before trying to find an element of $\Pi(\mu, \nu)$ with some good structure (say, a potential), or an optimal element with respect to the cost, one must figure out whether {\em any} element $\pi \in \Pi(\mu, \nu)$ has a finite cost.  
Clearly, if the cost function is bounded, we may find a finite cost plan between any pair of measures. However, if the cost admits the value $+\infty$, an obvious necessary condition for the existence of a finite cost plan is that every set in $(X, \mu)$ has ``enough'' points in $(Y, \nu)$ to which it can be mapped for a finite cost.

In the case of two discrete measures, this necessary condition is also sufficient, which is the subject of Hall's marriage theorem. We start with this simple case as it gives some intuition for our next steps.

\subsection{Starting point: Hall's Marriage Theorem}
  In the following motivating example, for some $(x_i)_{i=1}^m \subset X$ let $\mu=\sum_{i=1}^m \frac{1}{m}\ind_{x_i}$ be a probability  measure on $X$, and for $(y_i)_{i=1}^m \subset Y$ let $\nu=\sum_{i=1}^m \frac{1}{m}\ind_{y_i}$ be a probability measure on $Y$. Let $c:X\times Y \to (-\infty, \infty]$  be an arbitrary cost. A finite cost map is a given by a bijection $T:(x_i)_{i=1}^m \to (y_i)_{i=1}^m$, such that   $c(x_i, T(x_i))<\infty$ for all $i=1,\dots m$.
  The bijection $T$ corresponds, of course, to a permutation $\sigma:[m]\to [m]$. 
   By Birkhoff's theorem on the extremal points of bi-stochastic matrices, every transport plan $\pi \in \Pi (\mu, \nu)$ is a convex combination of permutation maps $T$.   
  
  The condition for the existence of a finite cost map/plan can be thus reformulated in a graph-theoretic way: Let $G$ be a bipartite graph with a vertex set $V=(x_i)_{i=1}^m\cup (y_i)_{i=1}^m$ and edges $E=\{(x_i,y_j):\, c(x_i,y_j)<  \infty \}$. A finite cost map $T$ corresponds a \textbf{matching} in this graph.  Hall's Marriage Theorem gives the necessary and sufficient conditions for such a matching to exist.

\begin{thm}[Hall's Marriage Theorem]\label{thm:halls-marriage}
	A bipartite graph $G$ with a vertex set $V_1 \cup V_2$, such that $|V_1|=|V_2|$, contains a complete matching  if and only if $G$ satisfies Hall's condition
	\[|N_G(S)| \ge |S| \text{ for every } S \subset V_1,\]
	where $N_G(S)\subset V_2$ is the set of all neighbors of vertices in $S$.
\end{thm}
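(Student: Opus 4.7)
The proof splits naturally into necessity and sufficiency. Necessity is immediate: if $M \subset E$ is a complete matching, then for any $S \subset V_1$ the restriction $M|_S$ is an injection from $S$ into $N_G(S)$, so $|N_G(S)| \geq |S|$. The content of the theorem is sufficiency, and the cleanest route I would take is induction on $n = |V_1| = |V_2|$. The base case $n = 1$ is trivial, since Hall's condition applied to the single vertex of $V_1$ guarantees at least one neighbor.

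For the inductive step, assume the result for all bipartite graphs with $|V_1| < n$ and assume $G$ on $V_1 \cup V_2$ satisfies Hall's condition with $|V_1| = |V_2| = n$. I would split into two cases depending on how tight Hall's condition is. In the \emph{slack case}, where $|N_G(S)| \geq |S| + 1$ for every non-empty proper $S \subsetneq V_1$, pick any $x \in V_1$ and any neighbor $y \in N_G(\{x\})$ (which exists by Hall applied to $\{x\}$), and delete both. For any $S' \subset V_1 \setminus \{x\}$, the neighborhood loses at most the vertex $y$, so $|N_{G'}(S')| \geq |N_G(S')| - 1 \geq |S'|$; thus Hall holds for $G'$ and the induction hypothesis, together with the edge $(x,y)$, yields a complete matching. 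In the \emph{tight case}, where some non-empty proper $S \subsetneq V_1$ satisfies $|N_G(S)| = |S|$, apply the induction hypothesis separately to the two induced bipartite graphs on $S \cup N_G(S)$ and on $(V_1 \setminus S) \cup (V_2 \setminus N_G(S))$, and concatenate the resulting matchings.

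The main obstacle, and the only step that requires genuine thought, is verifying that Hall's condition is inherited by the two smaller graphs in the tight case. For the graph on $S \cup N_G(S)$ it is immediate since neighborhoods of subsets of $S$ are unchanged. For the graph on $(V_1 \setminus S) \cup (V_2 \setminus N_G(S))$, given any $T \subset V_1 \setminus S$, the crucial observation is that the neighborhood of $T$ in $V_2 \setminus N_G(S)$, call it $N'(T)$, satisfies $N_G(S \cup T) = N_G(S) \cup N'(T)$ as a disjoint union, so applying Hall to $S \cup T$ in the original graph $G$ gives $|S| + |N'(T)| = |N_G(S \cup T)| \geq |S| + |T|$, hence $|N'(T)| \geq |T|$. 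This is where the equality $|N_G(S)| = |S|$ is used in an essential way.

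An alternative route would be to derive the statement from the König--Egerváry theorem or from a max-flow min-cut argument on the auxiliary network with a source connected to $V_1$ and a sink connected from $V_2$ by unit-capacity edges; these are conceptually cleaner but require importing more machinery, so for a self-contained presentation the inductive proof sketched above seems optimal.
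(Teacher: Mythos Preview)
Your proof is correct and is the standard Halmos--Vaughan inductive argument for Hall's theorem. Note, however, that the paper does not actually prove this statement: Theorem~\ref{thm:halls-marriage} is quoted as a classical result to motivate the notion of $c$-compatibility, with no proof given. So there is no ``paper's own proof'' to compare against here; your write-up would serve perfectly well as a self-contained justification if one were desired.
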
 

The condition can be reformulated in terms of the measures, as 
\[\mu(A)\le \nu(\{y: \exists x\in A,\,\, c(x,y)<\infty\}) \]
for any $A\subset X$, 
or, equivalently, 
\[\mu(A)+ \nu(\{y: \forall x\in A,\,\, c(x,y)=\infty\}) \le 1.\]

In fact, in this discrete and finite case, once we have determined the existence of a finite cost map, we may consider, among the finite number of possible matchings, the one with minimal cost (there may, of course, be more than one). 
It is then not hard to show (and will follow from our results as well) that this resulting optimal plan must lie on a $c$-subgradient of a $c$-class function. (This fact follows from a variation of a theorem of R\"uschendorf \cite{Ruschendorf1996c},  see also \cite{paper1}.)

\subsection{The $c$-compatibility condition}

The continuous counterpart for Hall's condition is an obvious necessary condition for the existence of a finite cost plan. 

\begin{definition}\label{def:c-comp-finite}
	Let $X,\,Y$ be measure spaces and  $c:X\times Y \to (-\infty, \infty]$ be a measurable cost function. We say that two probability measures $\mu\in {\mathcal P}(X)$ and $\nu \in {\mathcal P}(Y)$ are  \emph{\textbf{$c$-compatible}}  if for any measurable $A\subset X$ it holds that
	\[ \mu(A) + \nu(\{y: \forall x \in A,\,\,c(x,y) = \infty\}) \le 1.\] 
\end{definition}

It is not hard to check that $c$-compatibility is in fact a symmetric notion, and the above condition holds if and only if for any $B\subset Y$ we have 
\[ \nu(B) +  \mu(\{x: \forall y\in B,\,\,c(x,y) = \infty\})\le 1.\]

Indeed, to get the latter we let $A = \{ x: \forall y\in B,\,\,c(x,y) = \infty\}$, in which case $B\subset \{ y: \forall x \in A,\,\,c(x,y) = \infty\}$. Applying the assumed inequality, we get  
\[  \nu(B) +  \mu(A) \le \nu(\{y: \forall x \in A,\,\,c(x,y) = \infty\}) + \mu(A) \le 1.\] 
 
The fact that any plan $\pi \in \Pi(\mu, \nu)$ which has finite cost must be concentrated on the finiteness set \[S = \{ (x,y): c(x,y)<\infty\} \subset X\times Y\] implies the necessity of the condition, as is given in the following lemma.

\begin{lem}\label{lem:pre-strassen}
	Let $X,\,Y$ be measure spaces and  $c:X\times Y \to (-\infty, \infty]$ be a measurable cost function. Given $\mu\in {\mathcal P}(X)$ and $\nu \in {\mathcal P}(Y)$, assume there exists $\pi \in \Pi(\mu, \nu)$   which is concentrated on  	$S = \{(x,y)\in X\times Y:\ c(x,y)<\infty\}$. Then $\mu$ and $\nu$ are $c$-compatible. 
\end{lem}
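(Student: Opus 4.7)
The plan is a direct application of the marginal conditions together with the inclusion-exclusion identity $\pi(U) + \pi(V) = \pi(U \cup V) + \pi(U \cap V)$. Fix any measurable $A \subset X$ and set
\[ B = \{y \in Y : \forall x \in A,\, c(x,y) = \infty\}. \]
First I would verify that $B$ is measurable (this is automatic, say, when $c$ is lower semi-continuous, or can otherwise be handled by passing to the completion of $\nu$); alternatively one may work with inner/outer measure throughout, which does not affect the final inequality since $B$ is determined by $A$ and $c$.

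Next, the key observation: for every $(x,y) \in A \times B$ we have $c(x,y) = \infty$ by the very definition of $B$, so $A \times B$ is disjoint from $S = \{c < \infty\}$. Since $\pi$ is concentrated on $S$, this forces $\pi(A \times B) = 0$.

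Now apply the marginal conditions $\pi(A \times Y) = \mu(A)$ and $\pi(X \times B) = \nu(B)$, together with the elementary identity
\[ \pi(A \times Y) + \pi(X \times B) = \pi\bigl((A \times Y) \cup (X \times B)\bigr) + \pi\bigl((A \times Y) \cap (X \times B)\bigr). \]
The intersection equals $A \times B$, which has $\pi$-measure zero, while the union is a subset of $X \times Y$, hence has $\pi$-measure at most $1$. Combining,
\[ \mu(A) + \nu(B) = \pi(A \times Y) + \pi(X \times B) \le 1 + 0 = 1, \]
which is precisely the $c$-compatibility condition.

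I do not expect any real obstacle here; the only subtlety is the measurability of the set $B$, which in the intended setting (Polish spaces, $c$ measurable in the strong sense postulated in the paper, or continuous $c$ as in Theorem~\ref{thm:transport-polar-comp}) is a routine check. The whole argument is just the contrapositive of the trivial remark that any plan must move each piece of $\mu$-mass in $A$ to some $\nu$-mass with which $A$ has finite cost, quantified via the product-measure identity above.
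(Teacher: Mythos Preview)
Your proof is correct and takes essentially the same approach as the paper. The paper's version intersects $A\times Y$ and $X\times B$ with $S$ first and then observes the two intersections are disjoint, whereas you use inclusion--exclusion on the full product sets and note $\pi(A\times B)=0$; this is a cosmetic difference, as both arguments hinge on the same observation that $(A\times B)\cap S=\emptyset$.
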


\begin{proof}
	Let $A\subset X$. 	As  $\pi \in \Pi(\mu, \nu)$, we know
	$\mu(A) = \pi(A\times Y)$, and by assumption, $\pi(A\times Y) = \pi ((A \times Y)\cap S)$. Similarly, 
	\[ \nu(\{y: \forall x \in A,\,\,c(x,y) = \infty\})   = \pi ((X\times\{ y: \forall x \in A\,\,,\,\,c(x,y) = \infty\})\cap S ).\] 
	However, these two sets are disjoint, since if $(x,y)\in S$ then $c(x,y) < \infty$, so if $x\in A$ then clearly $y$ does not satisfy that 
	for all $x\in A$, $c(x,y) = \infty$. Therefore, the $\pi$-measures of the two sets sum to at most $1$.  
\end{proof}

It is useful to know that in certain situations the $c$-compatibility condition is also sufficient for the existence of a finite cost plan; such is the case when the finiteness set $S$ is closed. One may then use the following theorem of Strassen  \cite{Strassen1965existence}.

\begin{thm}[Strassen]\label{thm:strassen}
	Let $X,\,Y$ be complete separable metric measure spaces and let  $S$ be a non-empty closed subset of $X \times Y$. Given $\mu \in {\mathcal P}(X)$ and $\nu \in {\mathcal P}(Y)$, there exists  $\pi \in \Pi(\mu, \nu)$ which is supported on  	 $S$   if and only if for all open $B\subset Y$
	\begin{equation}\label{eq:strassen}
	\nu (B) \le \mu (P_X(S\cap (X\times B))),
	\end{equation}
	where $P_X$ is a projection onto $X$.
\end{thm}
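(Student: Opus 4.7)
The plan is to handle necessity directly from the support condition, and prove sufficiency by discretization combined with a weak-limit argument.

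\textbf{Necessity.} Suppose $\pi\in\Pi(\mu,\nu)$ is concentrated on $S$. For any Borel $B\subset Y$ (in particular any open $B$), the inclusion $(X\times B)\cap S \subset P_X((X\times B)\cap S)\times Y$ is immediate, so
\[\nu(B)=\pi(X\times B)=\pi((X\times B)\cap S)\le \pi(P_X((X\times B)\cap S)\times Y)=\mu(P_X((X\times B)\cap S)).\]
If the projection fails to be Borel, I read $\mu$ as outer measure: in a Polish space this projection is analytic and hence universally measurable, so the inequality is unambiguous.

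\textbf{Sufficiency.} My plan is to discretize both measures, apply a weighted Hall's marriage theorem at each discrete level, and pass to the limit. Fix $n\in\N$ and, using tightness and separability, choose finite Borel partitions $\{A^n_i\}$ of $X$ and $\{B^n_j\}$ of $Y$ into cells of diameter $<1/n$ (up to sets of arbitrarily small $\mu$- and $\nu$-mass). With representatives $x^n_i\in A^n_i,\, y^n_j\in B^n_j$, form the discrete approximations $\mu_n=\sum_i \mu(A^n_i)\delta_{x^n_i}$ and $\nu_n=\sum_j \nu(B^n_j)\delta_{y^n_j}$, and build a bipartite graph $G_n$ by joining $x^n_i$ to $y^n_j$ iff $(A^n_i\times B^n_j)\cap S^{1/n}\ne\emptyset$, where $S^\eta$ is the open $\eta$-neighborhood of $S$. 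Using the open-set hypothesis applied to $B=\bigcup_{j\in J}\operatorname{int}(B^n_j)$ (after approximating $\bigcup_{j\in J} B^n_j$ from inside by open sets of nearly full $\nu$-measure), I verify the weighted Hall condition $\nu_n(J)\le \mu_n(N_{G_n}(J))$ for every index set $J$; a finite weighted Hall / max-flow argument then produces $\tilde\pi_n\in \Pi(\mu_n,\nu_n)$ supported on the edges of $G_n$. Redistributing each atom of $\tilde\pi_n$ inside $A^n_i\times B^n_j$ while respecting the marginals, I obtain $\pi_n\in\Pi(\mu,\nu)$ concentrated on $S^{2/n}$ up to a vanishing error. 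Tightness of $\{\pi_n\}$ follows from tightness of the marginals $\mu,\nu$, so Prokhorov's theorem yields a subsequence $\pi_{n_k}\rightharpoonup \pi\in\Pi(\mu,\nu)$. Since each $\pi_{n_k}$ charges $\overline{S^{2/n_k}}$ and $S$ is closed with $S=\bigcap_m \overline{S^{1/m}}$, the Portmanteau inequality $\limsup_k \pi_{n_k}(\overline{S^{1/m}})\le \pi(\overline{S^{1/m}})$, applied for each fixed $m$ and followed by $m\to\infty$, gives $\pi(S)=1$.

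The main obstacle I expect is the bridge from the hypothesis, which is testable only on \emph{open} $B\subset Y$, to a weighted Hall inequality on the partition of $Y$: the union $\bigcup_{j\in J}B^n_j$ is merely Borel, so I must approximate it from inside by open sets using outer regularity of $\nu$, while simultaneously arranging that the projection $P_X(S\cap (X\times B))$ is contained in the $1/n$-enlargement of $N_{G_n}(J)$ so as to dominate $\mu_n(N_{G_n}(J))$ up to vanishing error. The potential non-Borel nature of the projection is a secondary nuisance, absorbed by the analytic-set theorem. Once the discrete Hall inequality is secured, the matching step and the weak-limit step are standard.
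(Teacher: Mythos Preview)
The paper does not prove this theorem at all: Theorem~\ref{thm:strassen} is quoted as a classical result of Strassen, with a citation to \cite{Strassen1965existence}, and is used only as a black box. So there is no ``paper's own proof'' to compare against.

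As for your argument itself: the necessity direction is clean and correct. The sufficiency sketch (discretize, apply a weighted Hall/max-flow theorem, redistribute to get $\pi_n\in\Pi(\mu,\nu)$ supported near $S$, then take a weak limit via Prokhorov and Portmanteau) is a standard and valid route to Strassen's theorem. One point is phrased in a confused way: you speak of ``approximating $\bigcup_{j\in J}B^n_j$ from inside by open sets \ldots\ using outer regularity of $\nu$,'' but outer regularity approximates from \emph{outside} by open sets, not from inside. The fix is simpler than what you wrote: apply the hypothesis directly to the open $1/n$-neighborhood $U_J$ of $\bigcup_{j\in J}B^n_j$; then $\nu_n(J)\le\nu(U_J)\le\mu(P_X(S\cap(X\times U_J)))$, and since any $(x,y)\in S$ with $y\in U_J$ has $y$ within $1/n$ of some $B^n_{j_0}$, the point $(x,y')\in A^n_i\times B^n_{j_0}$ lies in $S^{1/n}$, so $P_X(S\cap(X\times U_J))\subset\bigcup_{i\in N_{G_n}(J)}A^n_i$. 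With that correction, the discrete Hall inequality holds exactly (no vanishing error needed at this step), and the remainder of your sketch---the product-measure redistribution inside cells, tightness from fixed marginals, and the closed-set Portmanteau bound $\limsup_k\pi_{n_k}(\overline{S^{1/m}})\le\pi(\overline{S^{1/m}})$ followed by $\bigcap_m\overline{S^{1/m}}=S$---goes through as written.
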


In the case of a non-traditional cost $c$, the relevant set $S$  considered in Lemma \ref{lem:pre-strassen} is not necessarily closed. 
If $S$ is closed, and $c$ is bounded on it, %
then the condition in Strassen's Theorem is sufficient for the existence of a finite-cost transport plan. In some cases, one may use this together with the theorems stated in Section \ref{subsec:path-bdd-cases} and the results from \cite{paper1} to show that a minimizing plan exists and is concentrated on the graph of a $c$-subgradient. An example of such reasoning for some explicit cost functions will appear in the forthcoming \cite{paper3}.

However, for certain important costs, and in particular for the polar cost $p$ defined in $\eqref{def:polar-cost}$ which serves as a motivating example for this study,  the set $S$ of {\em finite-cost pairs} is not closed. 

To illustrate the problem, let us give an example of two measures on intervals which are $c$-compatible (we will use the one dimensional polar cost) but do not admit any plan supported on the finiteness set $S$. %

\begin{exm}\label{example:ccompatible-not-enough} 
	Consider once more the polar cost $p(x,y) = -\ln (xy-1)_+$ on  $\RR^+\times \RR^+$. Its finiteness set is $S = \{ (x,y): xy >1\}$. Let $\gamma$ be the uniform measure on the set $S_1 = \{(x,1/x)\in \RR^2: x\in [1/2,2]\}$ and let $\mu$ be its marginal on the first coordinate and $\nu$ its marginal on the second coordinate. 
	
	It is not hard to check that the measures $\mu$ and $\nu$ (which are the same measure) are $p$-compatible. Indeed, let $A\subset \RR^+$ be open, note that 
	\[P_X((\RR^+\times A)\cap S) = \cup_{y\in A}(1/y,\infty) = (1/\sup(A), \infty).\]
	Additionally, for any number $\alpha \in [1/2,2]$ we have, by definition, that $\nu([1/2, \alpha])=\mu([1/\alpha, 2])$. Combining these observations with the continuity of $\mu$ and $\nu$ we see that the measures are polar compatible
	\[\nu(A)\le \nu([1/2,\sup (A)]) = \mu([1/ \sup (A),2] )=\mu([1/ \sup (A), \infty))=\mu(P_X((A\times X)\cap S)). \]

	We turn to show that there is no transport plan $\pi \in \Pi(\mu, \nu)$ supported on $S$. 
	Assume towards a  contradiction that there exists such a %
	transport plan $\pi$. 
	In particular, this implies that there exists some rectangle $B=[x_1,x_2]\times [y_1,y_2] \subset S$ of positive measure. By the definition of $S$, we have that $x_1 y_1 >1$. %
	As $\pi$ is supported in $S$  we see that
	\[\mu([1/2,x_1]) = \pi([1/2,x_1]\times [x_1^{-1},2])\le \nu([x_1^{-1},2]) = \mu([1/2,x_1])\]
	where the last equality follows from the definition of $\mu$ and $\nu$. We thus have equalities all along. Similarly,  
	\[ \nu([1/2,x_1^{-1}]) = \pi([x_1,2]\times [1/2,x_1^{-1}])\le \mu([x_1,2]) = \nu([1/2,x_1^{-1}]). \]
	So we conclude that $\pi([1/2,x_1]\times [x_1^{-1},2]) +\pi([x_1,2]\times [1/2,x_1^{-1}]) = \mu([1/2,x_1])+ \mu([x_1,2]) = 1,$ 
	that is, $\pi$
	is supported on $[1/2,x_1]\times [x_1^{-1},2] \cup [x_1,2]\times [1/2,x_1^{-1}])$, which  is a contradiction to the fact that $\pi(B)>0$.
\end{exm}
\begin{figure}[H] 
	\centering
	\includegraphics[width=0.3\textwidth]{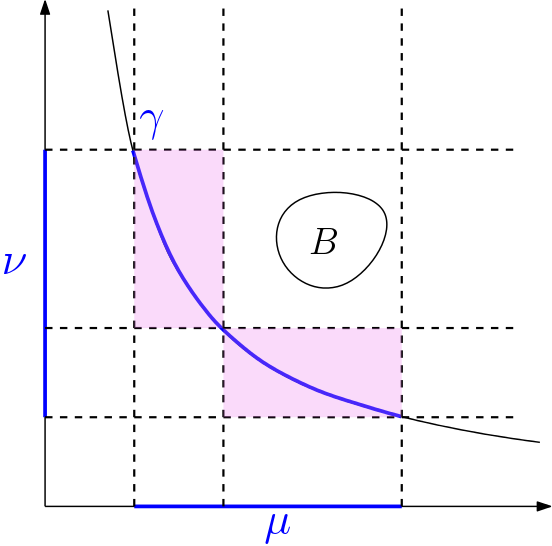}
		\caption{A schematic drawing of Example \ref{example:ccompatible-not-enough}.\label{fig:example35}}
\end{figure}

\subsection{The Hall polytope}\label{subsec:Hallpol}

Let us consider a special case, which will be the focus of Section \ref{sec:discrete-pf}, when one of the measures is discrete and the other one arbitrary. In such a case, the compatibility condition can be realized geometrically by a polytope, which we call the Hall polytope. We use $\Delta_m = \{\alpha \in \RR^m: \alpha_i \ge 0,\, \sum_{i=1}^m \alpha_i = 1\}$ to denote the $(m-1)$-dimensional simplex.

\begin{definition}\label{def:hall-poly}
	Let $X$ be some measure space, and $Y = \{u_i\}_{i=1}^m$. Assume $c:X\times Y\to (-\infty,\infty]$ is a measurable cost function, 
	and let $\mu$ be a probability measure supported on $\{x\in X: \exists i\in [m],\,\,  c(x,u_i)<\infty\}$. Define the  \emph{\textbf{Hall polytope}} associated with $(u_i)_{i=1}^m$  and $\mu$ by 
	\[ P = P((u_i)_{i=1}^m, \mu) = \bigcap_{I\subset [m]} \{ \alpha\in \Delta_m: \sum_{i\in I} \alpha_i \le\mu (A_I)\},\]
	where
	\[ A_I :=  \{x\in X:\ \exists i\in I\ c(x,u_i)<\infty\}.\] 
\end{definition}

Note that the definition implies that $\mu$ and $\nu= \sum_{i=1}^m \alpha_i \ind_{u_i}$ are $c$-compatible
if and only if $\alpha \in P((u_i)_{i=1}^m, \mu)$. 

We get back to this definition, and present a careful study of the resulting polytopes, in Section \ref{sec:discrete-pf}.

\subsection{Strong $c$-compatibility}

We saw in Example \ref{example:ccompatible-not-enough} that $c$-compatibility is not a sufficient condition for the existence of a finite cost plan. In fact, we will see in Example \ref{ex:no potential} that there exist $c$-compatible measures which do admit a finite cost plan but not a potential. 
Therefore, we consider a slight strengthening of $c$-compatibility, which will ensure that the existence of a finite cost plan implies the existence of a potential. 
 We call this condition strong $c$-compatibility, and it amounts to asking for a strict inequality in the defining inequalities.

\begin{definition}\label{def:strong-c-comp1}
		Let $X,\,Y$ be measure spaces and  $c:X\times Y \to (-\infty, \infty]$ be a measurable cost function. We say that two probability measures $\mu\in {\mathcal P}(X)$ and $\nu \in {\mathcal P}(Y)$ are   \emph{\textbf{strongly $c$-compatible}}  if they are $c$-compatible and  for any measurable $A\subset X$ with $0<\mu(A)<1$ it holds that
	\[ \mu(A) + \nu(\{y: \forall x \in A,\,\,c(x,y) = \infty\}) < 1.\] 
\end{definition}

The motivation for this specific strengthening of the condition of $c$-compatibility  is twofold: First, if two measures are  $c$-compatible and not strongly $c$-compatible, this means that there exists a decomposition of the transport problem into two sub-problems (see Section \ref{sec:decomposition}). Indeed, this is quite clear from the definition: if some set $A$ of measure $\mu(A)\in (0,1)$ satisfies the equality 
\[ \mu(A) + \nu(\{y: \forall x \in A,\,\,c(x,y) = \infty\}) = 1,\]
then letting $B = \{y: \forall x \in A,\,\,c(x,y) = \infty\}$ we see that $A$ must be mapped to $Y\setminus B$ (and they have the same measure) and the preimage of $B$ must be $X\setminus A$. That is, the original transport problem is in fact decomposed into two disjoint transport problems.

 Second, in the discrete setting of Section \ref{subsec:Hallpol},   strong $c$-compatibility corresponds to the weight vector $\alpha$ residing in the interior of the Hall polytope, which makes for an elegant assumption.  
 
 We stress that strong $c$-compatibility is
  not a necessary condition, only $c$-compatibility is. Even if one of the measures is discrete, it could be that the Hall polytope has an empty interior, but good transport maps, admitting a potential, exist.%

\subsection{The geometric meaning of strong $c$-compatibility}

It will be very useful to rephrase the condition of strong $c$-compatibility in terms that are more geometric. In fact, looking back at the proof of the symmetry of the notion of $c$-compatibility, it seems evident that we do not need to assume an inequality $\mu(A) + \nu(\{y: \forall x \in A,\,\,c(x,y) = \infty\}) \le 1$ (or a strict inequality, in the strong $c$-compatibility assumption) for all sets $A$, and it suffices to consider sets of the form $\{ x: \forall y \in B,\,\,c(x,y) = \infty\}$. To make this observation more precise, we introduce the notion of the $c$-dual of a set.

\begin{definition}[$c$-duality]
	Let $X,\,Y$ be two sets and let $c:X\times Y \to (-\infty,\infty]$. Fix $t\in (-\infty, \infty]$ (which will be omitted in the notation as it is a fixed parameter).  
	For $K\subset X$ define the \emph{$c$-dual} set of $K$ as
	\[ K^c =\bigcap _{x\in K} \{ y\in Y:\, c(x,y)\ge t \}= \{ y\in Y:\, \inf_{x\in K} c(x,y) \ge t \}.\]
\end{definition}

It will be convenient to assume $X = Y$ and that the cost is symmetric, and as this is the case relevant for this note, we restrict to this case. However, the reader will find it easy to generalize to the case where $X\neq Y$, in which case there are two different ``$c$-duality'' operations, one mapping sets in $X$ to sets in $Y$, and one mapping sets in $Y$ to sets in $X$, similarly to the $c$-transform. 

Let us point out that for the polar cost $p(x,y)=-\ln (\sp{x,y}-1)_+$ and $t = \infty$,  the set $K^p$ is the well known polar set $K^\circ$. Indeed, we have that $\inf_{x\in K} p(x,y)=\infty$ if and only if $ \sup_{x\in K} \sp{x,y}\le 1$. For the classical cost $c(x,y) = -\iprod{x}{y}$ and $t = -1$, we also get the polarity map.  

\begin{rem}
	If one adds the assumptions that $X$ and $Y$ are measure spaces and that the cost is upper semi-continuous, it follows that for a fixed $x$, say, the set $\{y:\, c(x,y)\ge t\}$ is closed, and hence so is $K^c$.
\end{rem}

Having defined an operation on sets, let us notice some basic properties. 

\begin{lem}\label{lem:c-dual properties} For every $K,L\subset X$, the following hold
	\begin{enumerate}[(i)]
		\item $K\subset (K^c)^c=K^{cc}$,
		\item if $L\subset K$ then $K^c \subset L^c$,
		\item $K^c=K^{ccc}$.
	\end{enumerate}
\end{lem}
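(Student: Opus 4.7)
These three properties are the standard behaviour of an antitone Galois connection / closure operator, so the plan is simply to unfold the definition of $K^c$ and chase quantifiers; I do not anticipate any real obstacle, only the need to be careful about the direction of inequalities.

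First I would prove (ii), since it is the most basic. If $L \subset K$, then for every $y \in Y$ one has $\inf_{x \in L} c(x,y) \ge \inf_{x \in K} c(x,y)$, because one is taking the infimum over a smaller set. Hence if $y \in K^c$, meaning $\inf_{x\in K} c(x,y) \ge t$, then a fortiori $\inf_{x \in L} c(x,y) \ge t$, so $y \in L^c$. Equivalently (and this is the formulation I would actually use): $y \in K^c$ says $c(x,y) \ge t$ for every $x \in K$, which in particular holds for every $x \in L$, so $y \in L^c$.

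Next I would prove (i). Fix $x \in K$. To show $x \in K^{cc} = (K^c)^c$, I need $c(x,y) \ge t$ for every $y \in K^c$. But by the very definition of $K^c$, every $y \in K^c$ satisfies $c(x',y) \ge t$ for all $x' \in K$, and in particular $c(x,y) \ge t$ since $x \in K$. Hence $x \in K^{cc}$.

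Finally (iii) follows formally from (i) and (ii). One inclusion is obtained by applying (i) with $K$ replaced by $K^c$: this gives $K^c \subset (K^c)^{cc} = K^{ccc}$. For the reverse inclusion, start from (i), namely $K \subset K^{cc}$, and apply the antimonotonicity (ii) to this inclusion; this yields $(K^{cc})^c \subset K^c$, i.e.\ $K^{ccc} \subset K^c$. Combining the two gives $K^c = K^{ccc}$, completing the proof.
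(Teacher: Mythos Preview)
Your proof is correct and follows essentially the same approach as the paper: unfold the definition and chase quantifiers for (i) and (ii), then derive (iii) formally from the combination of the two. The only cosmetic difference is that the paper proves (i) before (ii), whereas you do it the other way around; your derivation of (iii) is in fact stated more cleanly than the paper's, which contains a small typo in the direction of one inclusion.
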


\begin{proof}
	\textit{(i)} This follows directly from the definition. If $x\in K$ and $y\in K^c$ then $c(x,y)\ge t$ so that $x\in K^{cc}$.
	
	\textit{(ii)} Assume that $L\subset K$, and $y\in K^c$, then $c(x,y)\ge t$ for all $x\in K$ and in particular for all $x\in L$, so $y\in L^c$.
	
	\textit{(iii)} From \textit{(i)} we know that $K \subset K^{cc}$, so from 
	\textit{(ii)} we get  $K^c \supset K^{ccc}$. On the other hand, applying \textit{(i)} directly to $K^c$ we get $K^{ccc}\subset K^c$, and equality is obtained. 
\end{proof}

The similarity of $c$-duality to the $c$-transform is apparent. We are thus motivated to define the $c$-class of sets, on which the $c$-duality is an order reversing bijection. In order to avoid confusion, as we suppressed $t$ in the notation, we restrict the next definition to $t=\infty$, the case relevant for this note.

\begin{definition}[$c$-class and  $c$-envelope]
Fix $t = \infty$.	The $c$-class of sets consists of all closed sets $K\subset X$ such that there exists some $L\subset X$ with $K=L^c$.
	For any set $K\subset X$ we define its $c$-envelope as the set $K^{cc}$,  which is the smallest $c$-class set containing $K$.
\end{definition} 

Let us note again that for the polar cost and $t = \infty$, the $p$-class consists of closed convex sets containing the origin, and the $p$-envelope is the polar convexification operation $K \mapsto K^{\circ \circ} =\overline{\conv \{0,K\}}$.

Our first observation is that in Definitions \ref{def:c-comp-finite} and \ref{def:strong-c-comp1} it is sufficient  to consider $c$-class sets, for $t= \infty$, instead of all measurable sets.

\begin{lem}\label{lem:polar-compatible1}
	Let $c:X\times Y \to (-\infty, \infty]$ be an upper semi-continuous symmetric cost function. Two probability measures $\mu\in {\mathcal P}(X)$ and $\nu\in {\mathcal P}(Y)$ are  $c$-compatible if and only if for every set $K=K^{cc}\subset X$ in the $c$-class we have \[\nu(K^c)\le 1 - \mu(K).\] They are strongly $c$-compatible if and only if in addition when $\nu(K^c)\neq 0,1$ we have \[\nu(K^c)< 1 - \mu(K).\] 
\end{lem}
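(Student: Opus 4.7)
The key observation is that for $t=\infty$, the set appearing in the definitions of ($c$-)compatibility, namely $\{y:\forall x\in A,\ c(x,y)=\infty\}$, is exactly $A^c$. So $c$-compatibility reads $\mu(A)+\nu(A^c)\le 1$ for every measurable $A\subset X$, and the claimed equivalence is really the statement that it suffices to verify this inequality on the smaller collection of $c$-class sets $K=K^{cc}$. The forward direction is immediate: any $c$-class set $K$ is closed (since $c$ is upper semi-continuous, each $\{y:c(x,y)\ge t\}$ is closed, so $K^c$ is closed for every $K$), hence measurable, so the defining inequality applies.

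For the reverse direction, given an arbitrary measurable $A\subset X$, I would pass to its $c$-envelope $K=A^{cc}$. By Lemma \ref{lem:c-dual properties}(i), $A\subset K$, so $\mu(A)\le \mu(K)$, and by Lemma \ref{lem:c-dual properties}(iii), $K^c=A^{ccc}=A^c$, so $\nu(K^c)=\nu(A^c)$. Since $K$ lies in the $c$-class, the assumed inequality gives $\mu(A)+\nu(A^c)\le \mu(K)+\nu(K^c)\le 1$, which is exactly $c$-compatibility.

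For the strong version I would use the symmetry of the notion, established immediately after Definition \ref{def:c-comp-finite} and which can be verified to pass to strong $c$-compatibility by the same reasoning: for every measurable $B\subset Y$ with $0<\nu(B)<1$ one has $\nu(B)+\mu(B^c)<1$. Applied with $B=K^c$, whenever $K=K^{cc}$ is in the $c$-class with $0<\nu(K^c)<1$, this yields $\nu(K^c)+\mu(K^{cc})<1$, and using $K^{cc}=K$ gives the desired strict inequality. Conversely, assume the condition on $c$-class sets with $\nu(K^c)\neq 0,1$, together with the already-established $c$-compatibility. Given measurable $A$ with $0<\mu(A)<1$, set $K=A^{cc}$, so $\nu(K^c)=\nu(A^c)$, and split into three cases. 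If $\nu(A^c)=0$, then $\mu(A)+\nu(A^c)=\mu(A)<1$ directly. If $\nu(A^c)=1$, then $\mu(A)+\nu(A^c)>1$, contradicting the $c$-compatibility just obtained, so this case does not occur. If $0<\nu(A^c)<1$, then $0<\nu(K^c)<1$, and the strict hypothesis gives $\mu(A)+\nu(A^c)\le \mu(K)+\nu(K^c)<1$.

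The only real subtlety is the last case analysis; the rest is a direct manipulation of the envelope identities $A\subset A^{cc}$ and $A^c=A^{ccc}$. There is no genuine obstacle, as the measurability of $c$-class sets is automatic from upper semi-continuity, and the symmetry of strong $c$-compatibility needed in the forward direction is inherited from that of $c$-compatibility with no change in the argument beyond tracking when strict inequality is preserved.
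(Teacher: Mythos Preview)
Your proof is correct and takes essentially the same route as the paper: pass to the $c$-envelope $K=A^{cc}$ and exploit $A\subset K$ together with $A^c=K^c$. Your handling of the strong case is in fact more explicit than the paper's terse ``follows from the same proof''; as a minor streamlining, note that the forward strong direction does not actually require the symmetric form of strong $c$-compatibility---applying the $X$-side definition directly with $A=K$ and splitting on whether $\mu(K)\in\{0,1\}$ works exactly as in your reverse-direction case analysis.
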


\begin{proof}
	If $\mu$ and $\nu$ are  $c$-compatible then in particular $\nu(K^c) \le \mu(\{x: \, \inf_{y\in K^c}  c(x,y)<\infty \} )$, which can be rewritten as $\nu(K^c)\le \mu(X \setminus K^{cc})=\mu(X \setminus K)$. 
	
	For the other direction let $A\subset X$ be a measurable set, and consider the set $K = A^{cc}$. Then \[ \{y:\inf_{x\in A} c(x,y)=\infty \} = \{y: \forall {x \in A}\,\, c(x,y)=\infty \}= A^c =K^c.\]
	The last equality holds due to Lemma \ref{lem:c-dual properties} \textit{(iii)}. 
	Thus, using the condition on $c$-class sets and Lemma \ref{lem:c-dual properties} \textit{(i)}, we get
	\[ \nu(A^c)=\nu(K^c) \le 1- \mu(K) = 1-\mu(A^{cc}) \le 1-\mu(A),\]
	so that $\mu$ and $\nu$ are $c$-compatible.

	Similarly, two probability measures $\mu$ and $\nu$ are strongly  $c$-compatible if and only if they are $c$-compatible and for all $c$-class sets $K\subset X$ such that $\nu(K^c)\neq 0,1$ we have \[\nu(K^c)< 1 - \mu(K).\] 
	This follows from the same proof, the only difference being if $A\neq A^{cc}=K$, one gets a strong inequality by $\nu(A^c)\le 1-\mu(A^{cc}) < 1- \mu(A)$, which follows by Lemma \ref{lem:c-dual properties} \textit{(i)}. 	\end{proof}

In the next lemma we show that the strong $c$-compatibility of two measures implies a vital condition on the distribution of the transport plan between them. 

\begin{lem}\label{c-non-empty}
	Let $\mu$ be a probability measure on $X$, $\nu$ a probability measure on $Y$, and $\pi \in \Pi(\mu, \nu)$ a finite cost plan, with respect to the symmetric cost $c:X\times Y \to (-\infty, \infty]$. Then $\mu$ and $\nu$ are strongly $c$-compatible if and only if for every $c$-class set $K$ such that $\nu(K^c)\neq 0,1$, we have that
	\[ \pi( (X\setminus K)\times (Y\setminus K^c))>0.
	\]
\end{lem}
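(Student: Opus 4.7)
The plan is to reduce both directions to a single identity
\[
\pi\bigl((X\setminus K)\times (Y\setminus K^c)\bigr) = 1-\mu(K)-\nu(K^c),
\]
valid for every $c$-class set $K$, and then invoke Lemma \ref{lem:polar-compatible1}. To establish the identity, I first observe that, since $K$ is $c$-class and $t=\infty$, by definition $K^c=\{y:\forall x\in K,\,c(x,y)=\infty\}$. Hence $K\times K^c$ is entirely contained in $\{c=\infty\}$. As $\pi$ is assumed to have finite total cost, it is concentrated on the finiteness set $S=\{c<\infty\}$ (up to a $\pi$-null set), so $\pi(K\times K^c)=0$.

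Next, I would decompose the product space disjointly as
\[
X\times Y = (K\times K^c)\,\cup\,(K\times (Y\setminus K^c))\,\cup\,((X\setminus K)\times K^c)\,\cup\,((X\setminus K)\times(Y\setminus K^c)),
\]
and use the marginal conditions $\pi(K\times Y)=\mu(K)$ and $\pi(X\times K^c)=\nu(K^c)$. Combining these with $\pi(K\times K^c)=0$ yields $\pi(K\times(Y\setminus K^c))=\mu(K)$ and $\pi((X\setminus K)\times K^c)=\nu(K^c)$. Summing the four pieces and using $\pi(X\times Y)=1$ gives the claimed identity.

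With the identity in hand, both directions become almost tautological. By Lemma \ref{lem:polar-compatible1}, strong $c$-compatibility is equivalent to requiring, for every $c$-class set $K$ with $\nu(K^c)\neq 0,1$, the strict inequality $\nu(K^c)<1-\mu(K)$, i.e.\ $1-\mu(K)-\nu(K^c)>0$. Via the identity, this is exactly the condition $\pi((X\setminus K)\times(Y\setminus K^c))>0$ for all such $K$. (The non-strict $c$-compatibility inequality $\nu(K^c)\le 1-\mu(K)$ comes automatically from $\pi\ge 0$.)

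I do not anticipate any serious obstacle: the only subtlety is the bookkeeping around $\pi(K\times K^c)=0$, which must rely on the finite-cost assumption and the correct reading of $K^c$ for the parameter $t=\infty$ used throughout this subsection. Once that is set up cleanly, the equivalence is immediate from the identity and the previous lemma.
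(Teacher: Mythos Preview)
Your proposal is correct and follows essentially the same approach as the paper: both arguments hinge on the observation that $\pi(K\times K^c)=0$ (since $K\times K^c\subset\{c=\infty\}$ and $\pi$ is concentrated on $S$), combine this with the marginal identities, and then invoke Lemma~\ref{lem:polar-compatible1}. The only cosmetic difference is that you package the arithmetic as the clean identity $\pi((X\setminus K)\times(Y\setminus K^c))=1-\mu(K)-\nu(K^c)$ via a four-piece partition, whereas the paper phrases the same computation as the inclusion $(X\times K^c)\cap S\subset ((X\setminus K)\times Y)\cap S$ and subtracts; the content is identical.
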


\begin{proof}
	First we note that the existence of a finite cost plan $\pi \in \Pi(\mu, \nu)$	
	implies $c$-compatibility (see Lemma \ref{lem:pre-strassen}). Thus, under our assumptions, strong $c$-compatibility is equivalent, by Lemma \ref{lem:polar-compatible1}, to the fact that for every $c$-class $K$ with $\nu(K^c)\neq 0,1$  we have that $\nu(K^c )< 1- \mu(K) = \mu (X\setminus K)$. 
	Since $\pi\in \Pi(\mu, \nu)$ this can be rewritten as, for  $\nu(K^c)\neq 0,1$, 
	\[ \pi(X \times K^c)<\pi ((X \setminus K)\times Y), \]
	and if $\nu(K^c) = 1$ then $\mu(K) = 0$. 
	Note that as $\pi$ has finite cost, it is concentrated on $ S=\{(x,y): c(x,y)<\infty\}$, and so for $(x,y)$ in the support of $\pi$, if $y\in K^c$ then we must have $x\not \in K$. In particular, from the point of view of the measure $\pi$, the set on the left hand side is contained in the set on the right hand side. We can thus rewrite the first inequality as  
	\[0< \pi (((X \setminus K)\times Y) \setminus (X \times K^c))=\pi ((X\setminus K) \times (Y\setminus K^c)).
	\]
	completing the proof of the statement claimed.
\end{proof}

\section{Transportation of measure}\label{sec:arbitrary-measures}

Let us recall our main theorem, to be proved in this section.  
\begin{manualtheorem}{1.1}
	Let $X = Y$ be a Polish space, and $c:X\times Y \to (-\infty,\infty]$ be a continuous and symmetric cost function, essentially bounded from below with respect to   $\mu\in {\mathcal P}(X)$ and $\nu\in {\mathcal P}(Y)$. Assume $(\mu, \nu)$ are strongly $c$-compatible, and $C(\mu, \nu) <\infty$. 
	Then there exists a $c$-class function $\varphi$ and an optimal transport plan $\pi \in \Pi(\mu, \nu)$ concentrated on $\partial^c \varphi$. 
\end{manualtheorem}

In order to prove the theorem we will use a combination of Theorems \ref{thm:path-bdd-potential}, \ref{thm:villani}, and  \ref{thm: optimal better 1A}. We will show that once we have an optimal transport plan supported on a $c$-cyclically monotone set then it must be $c$-path-bounded. This will follow from an observation presented in \cite{paper1} which states that indeed in some special cases $c$-cyclic monotonicity implies $c$-path-boundedness. In order to formulate the condition let us introduce some notation.

We consider a directed graph, associated with a cost function $c:X\times Y \to (-\infty,\infty]$ and a set $G \subset S = \{(x,y): c(x,y)<\infty\} \subset X\times Y$, 
in which the vertices are elements of $G$ and there is a directed edge from $(x,y)$ to $(z,w)$ if $c(z,y)<\infty$. Since $G\subset S$ we may say that for every point in $G$ there is an edge (loop) with this point as a start and end vertex. 

The directed graph induces a (transitive) relation on points in $G$, namely $(x,y)\prec (z,w)$ if there is a directed path from $(x,y)$ to $(z,w)$. We then define an equivalence relation $\sim$ on elements of $G$ where we say that $(x,y) \sim (z,w)$ if $(x,y)\prec (z,w)$ and $(z,w)\prec (x,y)$, i.e. there is a directed cycle passing through both points.  To the best of our knowledge,  this equivalence relation was first mentioned in \cite[Chapter 5, p.75]{villani-book} and studied in \cite{bianchini-caravenna, Schachermayer-optimal-and-better,paper1}.  The following proposition was proved (with a different formulation) in \cite{Schachermayer-optimal-and-better} and then in \cite{paper1}.

\begin{prop}\label{prop:one-class-c-path-bdd}
Let $c: X \times Y \to (-\infty ,\infty]$ be some cost function and let $G\subset X \times Y$ be a $c$-cyclically monotone set. Assume that all points in $G$ belong to one equivalence class of the equivalence relation $\sim$ defined above. Then $G$ is $c$-path-bounded. 
\end{prop}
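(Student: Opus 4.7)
The plan is to leverage the $\sim$-equivalence to close any given chain in $G$ into a directed cycle along which all cross-pair costs are finite, and then read off the required uniform bound directly from $c$-cyclic monotonicity.

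Fix two reference points $(x,y),(z,w)\in G$. Since they lie in the same $\sim$-class, there exists in particular a directed path from $(z,w)$ to $(x,y)$ in the associated graph, which I would fix once and for all, writing it as
\[(z,w)=(z_0,w_0),(z_1,w_1),\dots,(z_\ell,w_\ell)=(x,y),\]
with $(z_j,w_j)\in G$ and $c(z_{j+1},w_j)<\infty$ for every $j=0,\dots,\ell-1$. I then set
\[M:=M\bigl((x,y),(z,w)\bigr):=-\sum_{j=0}^{\ell-1}\bigl(c(z_j,w_j)-c(z_{j+1},w_j)\bigr)\in\R,\]
a finite real constant depending only on this fixed return path (and in particular on the two endpoints).

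Next, given an arbitrary chain $(x_1,y_1)=(x,y),(x_2,y_2),\dots,(x_m,y_m)=(z,w)$ in $G$, I may assume all $c(x_{i+1},y_i)<\infty$, since otherwise $\sum_{i=1}^{m-1}(c(x_i,y_i)-c(x_{i+1},y_i))=-\infty$, which is vacuously bounded by $M$. Concatenating the chain with the fixed return path produces a closed cycle of length $N:=m+\ell-1$,
\[(u_1,v_1),\dots,(u_N,v_N):=(x_1,y_1),\dots,(x_m,y_m),(z_1,w_1),\dots,(z_{\ell-1},w_{\ell-1}),\]
with the cyclic convention $(u_{N+1},v_{N+1})=(u_1,v_1)=(z_\ell,w_\ell)$. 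By construction every consecutive cross-pair cost $c(u_{i+1\bmod N},v_i)$ along this cycle is finite.

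Finally, I would apply $c$-cyclic monotonicity to this closed cycle with the cyclic shift $\sigma(i)=i-1\bmod N$; after reindexing this reads
\[\sum_{i=1}^{N}\bigl(c(u_i,v_i)-c(u_{i+1\bmod N},v_i)\bigr)\le 0.\]
Splitting the sum into its ``unknown chain'' piece ($i=1,\dots,m-1$) and its ``fixed return'' piece ($i=m,\dots,N$) yields exactly
\[\sum_{i=1}^{m-1}\bigl(c(x_i,y_i)-c(x_{i+1},y_i)\bigr)+\sum_{j=0}^{\ell-1}\bigl(c(z_j,w_j)-c(z_{j+1},w_j)\bigr)\le 0,\]
which rearranges to the desired estimate $\sum_{i=1}^{m-1}(c(x_i,y_i)-c(x_{i+1},y_i))\le M$. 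The only real obstacle is the indexing bookkeeping of the concatenation, together with the observation that the $\sim$-assumption is exactly what is needed to ensure a return path exists with all cross-costs finite; without such a finite-cost closure, $c$-cyclic monotonicity applied to the cycle would deliver no usable information.
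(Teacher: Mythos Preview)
Your argument is correct and is precisely the standard proof of this fact: fix a return path guaranteed by the $\sim$-equivalence, close any chain into a cycle, and read off the bound from $c$-cyclic monotonicity. The paper itself does not prove Proposition~\ref{prop:one-class-c-path-bdd} in the text but rather cites \cite{Schachermayer-optimal-and-better} and \cite{paper1} for it, so there is no in-paper proof to compare against; your proof is exactly the argument one finds in those references.
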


With this proposition in hand, our goal is to show that if $\pi$ is a finite cost plan between two strongly $c$-compatible measures, then we can find a set $G$, on which $\pi$ is concentrated, such that all of  points in $G$ are in one equivalence class of $\sim$.

\begin{prop}\label{prop:c-one-equivalence-class}
	Let $X,\,Y$ be two Polish spaces,   $\mu \in \calP(X), \nu\in \calP(Y)$ and assume $(\mu, \nu)$ are strongly $c$-compatible. 
	Let $\pi\in \Pi(\mu,\nu)$ be a finite cost transport plan from $\mu$ to $\nu$. 
	Then there exists a set $G$ on which $\pi$ is concentrated such that 
	all the points in $G $ are in one equivalence class of $\sim$. 
\end{prop}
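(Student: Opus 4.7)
I would argue by contradiction, extracting from strong $c$-compatibility a zero-one law on forward- and backward-closed subsets of $\supp\pi\cap\{c<\infty\}$, and then using the continuity of $c$ together with the definition of support to locate a point whose equivalence class has positive, hence (by the zero-one law) full, $\pi$-measure.

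\emph{Step 1 (zero-one law).} Put $G_0:=\supp\pi\cap\{c<\infty\}$, so $\pi(G_0)=1$. Call a measurable $H\subseteq G_0$ forward-closed if whenever $(x,y)\in H$ and $(z,w)\in G_0\setminus H$ we have $c(z,y)=\infty$; that is, no directed edge leaves $H$. Suppose for contradiction $0<\pi(H)<1$, and set $A:=P_X(G_0\setminus H)$, $B:=P_Y(H)$, so $\mu(A)\ge 1-\pi(H)$ and $\nu(B)\ge\pi(H)$. Forward-closedness gives $c(z,y)=\infty$ for all $z\in A,\,y\in B$, i.e.\ $B\subseteq A^c$ for the $c$-duality (with $t=\infty$) of Section~\ref{sec:c-compatibility}. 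The $c$-envelope $K:=A^{cc}$ is then a $c$-class set with $\mu(K)\ge\mu(A)$ and $\nu(K^c)=\nu(A^{ccc})=\nu(A^c)\ge\nu(B)$, so
\[
\mu(K)+\nu(K^c)\;\ge\;(1-\pi(H))+\pi(H)\;=\;1.
\]
Now $\nu(K^c)\ge\pi(H)>0$; and $\nu(K^c)=1$ would force $\mu(K)\le 0$ by plain $c$-compatibility, contradicting $\mu(K)\ge 1-\pi(H)>0$. Thus $\nu(K^c)\in(0,1)$ and Lemma~\ref{lem:polar-compatible1}, applied with strong $c$-compatibility, gives the strict inequality $\mu(K)+\nu(K^c)<1$---a contradiction. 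Hence every measurable forward-closed subset of $G_0$ has $\pi$-measure in $\{0,1\}$; the symmetric argument yields the same for backward-closed subsets.

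\emph{Step 2 (a point with full forward and backward cone).} Fix any $(x_0,y_0)\in G_0$. The forward cone $F(x_0,y_0):=\{(z,w)\in G_0:(x_0,y_0)\prec(z,w)\}$ is the increasing union over $n$ of the analytic (hence universally measurable) sets of $n$-step reachable vertices, and is visibly forward-closed, so $\pi(F(x_0,y_0))\in\{0,1\}$ by Step~1; analogously for $B(x_0,y_0)$. Since $c$ is continuous and $c(x_0,y_0)<\infty$, the maps $x\mapsto c(x,y_0)$ and $y\mapsto c(x_0,y)$ are continuous at $x_0$ and $y_0$, giving open neighbourhoods $W\ni x_0$ and $V\ni y_0$ with $c(x,y_0)<\infty$ on $W$ and $c(x_0,y)<\infty$ on $V$. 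For every $(x,y)\in(W\times V)\cap G_0$ the edges $(x_0,y_0)\to(x,y)$ and $(x,y)\to(x_0,y_0)$ both exist in the graph on $G_0$, so $(x,y)\sim(x_0,y_0)$ via a $2$-cycle. Since $(x_0,y_0)\in\supp\pi$ the open set $W\times V$ has positive $\pi$-mass, whence $\pi\bigl(F(x_0,y_0)\cap B(x_0,y_0)\bigr)>0$, and the zero-one law upgrades both $\pi(F(x_0,y_0))$ and $\pi(B(x_0,y_0))$ to $1$.

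\emph{Step 3 (extracting $G$).} Put $G:=F(x_0,y_0)\cap B(x_0,y_0)$; then $\pi(G)\ge\pi(F(x_0,y_0))+\pi(B(x_0,y_0))-1=1$. For any $(p,q),(r,s)\in G$, a $G_0$-cycle through both points is obtained by concatenating $(p,q)\prec(x_0,y_0)\prec(r,s)\prec(x_0,y_0)\prec(p,q)$; any intermediate vertex $(u,v)$ on such a cycle satisfies $(x_0,y_0)\prec(u,v)\prec(x_0,y_0)$ by a short chase using $(p,q),(r,s)\in G$, and hence lies in $F(x_0,y_0)\cap B(x_0,y_0)=G$. Consequently all cycles stay inside $G$ and every two points of $G$ are $\sim$-equivalent in the graph on $G$, as required. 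The step I expect to be the main obstacle is Step~2: the continuity of $c$ (part of the standing hypothesis of Theorem~\ref{thm:transport-polar-comp}) is what rules out the scenario in which every equivalence class has $\pi$-measure zero---a scenario the zero-one law from Step~1 alone does not exclude.
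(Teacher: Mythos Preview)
Your proof is correct and follows essentially the same route as the paper's: both work on $G_0=\supp\pi\cap\{c<\infty\}$, fix a point, show its forward and backward reachable sets in $G_0$ have full $\pi$-measure via strong $c$-compatibility and the $c$-duality, use continuity of $c$ at the chosen support point to get the positivity that upgrades to full measure, and finish by observing that cycles through $(x_0,y_0)$ stay inside $G=F(x_0,y_0)\cap B(x_0,y_0)$. Your packaging of the core step as a zero--one law for forward/backward-closed sets is a clean abstraction that lets you avoid the paper's case split on whether the bad density point projects into $(P_XH)^{cc}$, but the substance---projecting, passing to the $c$-envelope $K=A^{cc}$, and contradicting $\mu(K)+\nu(K^c)<1$---is the same (the paper routes this through Lemma~\ref{c-non-empty}, which is equivalent).
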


\begin{proof} 
	Let $G_1$ denote the support of $\pi$, and let $G_0$ denote the set $G_1 \cap \{(x,y)\in X\times Y: c(x,y)<\infty\}$. 
	Fix a point $(x,y)\in G_0$. We shall show that the set of points
	$(z,w)\prec (x,y)$ in $G_0$ is of $\pi$-measure one, as is the set of points $(z,w)$ such that $(x,y)\prec (z,w)$. The intersection of these two sets will also be of measure one, and we denote it by $G$. We will then explain why this $G$ fulfills the requirements of the proposition.
	
	Consider $H\subset G_0$ consisting of all points $(a,b) \prec (x,y)$. Assume towards a contradiction that $\pi (H) <1$. 
	Note that $\pi(H) >0$ since $(x,y)\in S_0$ and in the support of $\pi$, so that for any neighborhood $U$ of $(x,y)$ we have $\pi(U) >0$. Picking a small enough neighborhood $U$, we know that if $(z,w) \in U$ then $c(z,y)<\infty$ and so $(z,w) \in H$ (it may be that $\pi(x,y)>0$ and that $U$ consists of this one point alone). 
	
	Since $\pi(H) <1$ there is some $(z,w)\not\in H$ which is a density point of $\pi$, that is, for any neighborhood $V$ of $(z,w)$ one has $\pi(V) >0$.  
	
	If $z \in  (P_XH)^{cc}$ then $ (P_XH)^c=(P_XH)^{ccc}\subset \{z\}^c$ by Lemma \ref{lem:c-dual properties} \textit{(ii)} and \textit{(iii)}. Therefore $Y \setminus \{z\}^c \subset Y \setminus (P_XH)^c= \{v:\, \exists u\in P_X H, \, \, c(u,v)<\infty\}$. Further, since $(z,w)\in G$ we have that $c(z,w)<\infty$ and hence $w\in Y \setminus \{z\}^c$. But this means that $w \in Y\setminus (P_X H)^c$ and there exists some $a \in P_X H$ (and $b$ such that $(a,b)\in G_0$) with $c(a,w)<\infty$.
	Therefore $ (z,w)\prec (a,b)$, and by transitivity $(z,w)\prec (x,y)$, a contradiction.

	We may therefore assume that $(z,w)$ is such that $z \notin (P_XH)^{cc}=:K$. Since $K$ is a closed set in  $X$, there is a     neighborhood of $z$ which does not intersect $K$, and therefore we can find a neighborhood $V$ of $(z,w)$ which is of positive $\pi$ measure (as $(z,w)$ is a density point) and such that its projection onto $X$ does not intersect $K$. 
	Note that this implies in particular that $0<\mu(K)=\pi(K\times Y)<1$, since $H\subset K \times Y$ and $ V\cap (K \times Y) = \emptyset$. 
	We may therefore use Lemma \ref{c-non-empty} to deduce that 
	\[\pi((X\setminus K)\times (Y \setminus K^c)) > 0.\] 
	In particular there exists some point $(e,f)\in G_0$ Such that $e\not\in K$ and $f\not\in K^c$. 
	The fact that $e\not\in K$ means in particular that $(e,f) \not \in H$. The fact that $f\not\in K^c=(P_X H)^{ccc}=(P_X H)^c$ implies that $f\in \{v:\, \exists u \in P_X H \, c(u,v)<\infty \}$. Hence there is some point $a \in P_X H$ (and $b$ such that $(a,b)\in H$) such that $c(a,f)<\infty$, which means  that $(e,f) \prec (a,b ) \prec (x,y)$, thus contradicting the fact that $(e,f) \not\in H$. We conclude that the set $H$ satisfies $\pi(H) = 1$.
	
	Similarly we consider $F\subset G_0$ consisting of all points $(a,b)$ such that $(x,y)\prec (a,b)$. Using the same argument as above we get that $\pi(F)=1$.
	
	Hence, we found sets $F$ and $H$ of $\pi$-measure one. Let $G = F\cap H$, every point $(z,w)\in G$ satisfies that there is a directed path, going through points in $G_0$, between it and $(x,y)$. We now claim that these directed paths only go through points in $G$ itself. Indeed, consider a cycle (in $G_0$) which includes $(x,y)$ and $(z,w) \in G$. 
	The existence of this cycle implies that every point on it belongs to both $H$ and $F$, by the definition of the relation $\sim$, so that the whole cycle consists of points in $G$. The proof is now complete. 
	
\end{proof}

\begin{proof}[Proof of Theorem \ref{thm:transport-polar-comp}]
By assumption, $C(\mu, \nu)<\infty$, and we may use Theorem \ref{thm:villani}, the assumptions of which are satisfied, to find a $c$-optimal 
plan $\pi \in \Pi(\mu,\nu)$. By Theorem \ref{thm: optimal better 1A},  the plan $\pi$ is concentrated on some $c$-cyclically monotone set $G_1$.  
Proposition \ref{prop:c-one-equivalence-class} implies that $\pi$ is also concentrated on some set $G_2$ such that all points in $G_2$ are in one equivalence class of the relation $\sim$ defined above. Let $G = G_1\cap G_2$, then $G$ is a $c$-cyclically monotone set such that all of its elements lie in one equivalence class, therefore, by Proposition \ref{prop:one-class-c-path-bdd} the set $G$ is $c$-path-bounded. Finally, we use 
 Theorem \ref{thm:path-bdd-potential} which implies that a $c$-path-bounded set admits a potential, to find some $c$-class function $\varphi$ such that $G\subset \partial^c \varphi$.
 
 We have thus determined that there exists a $c$-optimal plan $\pi$ which is concentrated on $\partial^c \varphi$ for some $c$-class $\varphi$, as needed. 
\end{proof}

	\section{Transportation to a discrete measure} \label{sec:discrete-pf}

In this section we present a different approach to the problem of finding transport maps which lie on $c$-subgradients of functions. We consider the case where one measure is arbitrary (we will add some mild assumptions on it, connected with the cost, later on) and the second measure is discrete. As explained in Section \ref{subsec:Hallpol}, fixing the support of $\nu$ to be the set $\{y_i\}_{i=1}^m$, a necessary condition for the existence of a finite cost plan $\pi \in \Pi(\mu, \nu)$ is that the weight vector $\alpha\in \Delta_m$ associated with the probability measure $\nu=\sum_{i=1}^m \alpha_i \ind_{u_i}$ lies in the Hall polytope
\[ P = P((u_i)_{i=1}^m, \mu) = \bigcap_{I\subset [m]} \{ \alpha\in \Delta_m: \sum_{i\in I} \alpha_i \le\mu (A_I)\},\]
where
\[ A_I :=   \{x\in X:\ \min_i c(x,u_i)<\infty\}.\] 
So, our main objective is to show that indeed, for a measure $\nu$ corresponding to a weight vector in the polytope, a finite cost transport plan exists, and further, it is supported on the $c$-subgradient of some $c$-class function. We are able to do this under very general assumptions on the measure $\mu$, and provided $\alpha$ lies in the interior of the polytope (this is Theorem \ref{thm:discrete-transport}). Let us introduce the notion of $c$-regularity of a measure, which will be important for the construction given in this section. %
Roughly speaking, a measure is $c$-regular if it gives $0$-measure to sets where two different	 basic functions $c(\cdot ,y_1)+a_1$ and $c(\cdot ,y_0)+a_0$, coincide and equal some finite number.

\begin{definition}\label{def:c-regular}
	Let $X,\,Y$ be  measure spaces and let  $c:X\times Y \to (-\infty,\infty]
	$ be a measurable cost function, and $\mu$ a probability measure on $X$. If for any $y_1\neq y_0 \in Y$  and $t\in \RR$ 
	\[\mu(\{c(z,y_1) -  c(z,y_0) = t\})=0,\]
	then we say that $\mu$ is a \emph{\textbf{$c$-regular measure}}. 
\end{definition} 

For example, when the cost is such that  $\{ z: c(z,y_1) -  c(z,y_0) = t\}$ is of lower dimension, and the measure is absolutely continuous, the $c$-regularity property is satisfied. 

\subsection{Building transport maps}\label{subsection: building transport maps}

The idea of the proof is to manually 	
  construct functions whose $c$-subgradient is a transport map of a $c$-regular measure $\mu$ to a certain discrete measure $\nu$. We will consider basic functions and use the fact that the $c$-class is closed under the pointwise infimum. Formally, we have the following lemma. 
\begin{lem}\label{fact:c-subgradient-of-discrete-inf}
	Let $X,\,Y$ be measure spaces and let   $c:X\times Y \to (-\infty, \infty]$ be a measurable cost function. Fix a set of vectors $(u_i)_{i=1}^m\subset Y$  and let $\mu$ be a $c$-regular probability measure on $X$, which is supported on the set $\{x\in X: \exists i\in [m] {\rm~with~} c(x,u_i) <\infty\}$. 
	Given   numbers $(t_i)_{i=1}^m\subset \R$ let
	\[\varphi(x) = \varphi_{(u_i), (t_i)}(x)  = \min_{1\leq i \leq m} \left(c(x,u_i) + t_i\right)\]
	be a function in the $c$-class and denote $U_i = \left\{ x\in X:\ \arg\min_{1\leq j \leq m} \left(c(x,u_j) + t_j \right)= i \right\}
	$. 
	Then the mapping $T$, defined to be equal to $u_i$ on the set $U_i$, is well defined $\mu$-almost everywhere and satisfies that $T(x)\in \pa^c \varphi(x)$ for all $x$ in the support of $\mu$. Moreover, it transports $\mu$ to the measure $\nu =\sum_{i=1}^m \alpha_i \ind_{u_i}$ on $Y$, where
	$
	\alpha_i = \mu\left(U_i  \right).
	$
\end{lem}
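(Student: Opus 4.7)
The plan is to break the statement into three independent claims: (a) the map $T$ is well-defined $\mu$-almost everywhere, (b) its graph lies in $\partial^c\varphi$ on the support of $\mu$, and (c) it pushes $\mu$ forward to $\nu$. All three should follow directly from the definitions together with the $c$-regularity hypothesis; no deep machinery from the earlier sections is needed for this particular lemma.

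For well-definedness, I would observe that $T$ is ambiguous precisely on the set
\[
D = \bigcup_{i \neq j} \bigl\{ x \in X : c(x,u_i) + t_i = c(x,u_j) + t_j \bigr\} = \bigcup_{i\neq j}\bigl\{x : c(x,u_i) - c(x,u_j) = t_j - t_i\bigr\}.
\]
Each of the $\binom{m}{2}$ sets in this union has $\mu$-measure zero by $c$-regularity (Definition~\ref{def:c-regular}), so $\mu(D) = 0$ and $T$ is well-defined off $D$. Moreover, since $\mu$ is supported on $\{x : \exists i,\, c(x,u_i) < \infty\}$, for $\mu$-a.e.\ $x$ the minimum in the definition of $\varphi(x)$ is finite and is attained, so $\bigcup_i U_i$ covers the support of $\mu$ up to a $\mu$-null set.

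For the $c$-subgradient inclusion, the key computation is the value of $\varphi^c(u_i)$. By the definition of the $c$-transform,
\[
\varphi^c(u_i) = \inf_{z \in X}\bigl( c(z,u_i) - \varphi(z) \bigr).
\]
The pointwise inequality $\varphi(z) \le c(z,u_i) + t_i$ gives the lower bound $\varphi^c(u_i) \ge -t_i$, while evaluating at any $x \in U_i$ gives the matching upper bound $\varphi^c(u_i) \le c(x,u_i) - \varphi(x) = -t_i$. Hence $\varphi^c(u_i) = -t_i$, and for every $x \in U_i$ with $c(x,u_i)<\infty$ we have $\varphi(x) + \varphi^c(u_i) = c(x,u_i)$, which is exactly the condition $(x,u_i)\in\partial^c\varphi$ from \eqref{eq:c-subgradient}. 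Thus $T(x)\in \partial^c\varphi(x)$ for $\mu$-a.e.\ $x$ in the support of $\mu$.

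For the push-forward identity, note that by construction $T^{-1}(\{u_i\}) = U_i \setminus D$ up to a further resolution of ties within $D$, and $\mu(D)=0$, so
\[
T_\# \mu\,(\{u_i\}) = \mu(T^{-1}(\{u_i\})) = \mu(U_i) = \alpha_i,
\]
which gives $T_\#\mu = \nu$. The only subtle point is the ambiguity on $D$, but since $\mu(D)=0$ it does not matter how we assign values of $T$ there; redefining $T$ on $D$ in any measurable way preserves all three conclusions. I do not anticipate a real obstacle in this lemma: the entire argument is a bookkeeping exercise, and the $c$-regularity assumption is tailored precisely to dispose of the ambiguity set $D$.
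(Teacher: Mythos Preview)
Your proof is correct and follows essentially the same approach as the paper's. The paper's own argument is terser: it asserts directly from the definition of the $c$-subgradient that $u_i \in \partial^c\varphi(x)$ for $x \in U_i$, whereas you unpack this by explicitly computing $\varphi^c(u_i) = -t_i$; and the paper phrases the well-definedness as ``the intersections of the sets $U_i$ are of zero measure'' rather than naming the tie set $D$, but these are the same observation.
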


\begin{proof}
	Let $\v(x)$ be the function defined in the statement and note that
	it induces a partition of $X$ into $m$ sets $\{U_i\}_{i=1}^m$, where
	\[U_i = \{x\in X:\ \arg\min_{1\leq j \leq m} \left(c(x,u_j) + t_j\right) =i\}.\]
	By the definition of the $c$-subgradient given in \eqref{eq:c-subgradient}, $u_i \in \pa^c \varphi(x)$ for all $x\in U_i$. Let $T: X \to Y$ be the map given by $T(x)=u_i$ for all $x \in U_i$, so indeed $T(x)\in \pa^c \varphi(x)$. For $\mu$ which is $c$-regular, the intersections of the sets $U_i$ are of zero measure and thus $T$ is well defined $\mu$ almost everywhere.
	
	Clearly, the map $T$ transports the measure $\mu$ on $X$ to the measure $\sum_{i=1}^m \mu(U_i)\ind_{u_i}$.
\end{proof}

\begin{rems}
	\textit{(i)} In general, the partition to sets $\{U_i\}$ as above is not disjoint, so without the additional assumption of $c$-regularity of $\mu$ the map $T$ is not well-defined. 
	
	\textit{(ii)} Since we may add a constant to all $(t_i)_{i=1}^m$ without changing the $c$-subgradient, we will assume that $t_i\ge 0$. Thus, given a finite set $(u_i)_{i=1}^m\subset Y$, it will be convenient for us to consider the family of functions
	\[\varphi(x) =  \min_{1\leq i \leq m} \left(c(x,u_i) - \ln(t_i)\right),\]
	with $t =(t_i)_{i=1}^m$ in the $m$-dimensional simplex $\Delta_m$. 
\end{rems}

Lemma \ref{fact:c-subgradient-of-discrete-inf} guarantees that given a $c$-regular measure $\mu$ and points $(u_i)_{i=1}^m \subset Y$, the map $\varphi_{(u_i), (t_i)}$ induces a transport map $T:X \to Y$  mapping $\mu$ to $\sum \alpha_i \ind_{u_i}$. This simple idea will be very important in proving Theorem \ref{thm:discrete-transport}, and the bulk of the proof lies in
analyzing which weights $\alpha_i$ can be attained. In the classical case of the quadratic cost it was proved by K. Ball that all weight vectors $\alpha = (\alpha_i)_{i=1}^m \in \Delta_m$  can be attained \cite{Ball2004elementary}, from which he then obtained the Brenier theorem for all absolutely continuous measures $\mu$ and compactly supported $\nu$ using a limiting argument. 

In contrast, in the case of non-traditional costs one cannot expect that all weight vectors in $\Delta_m$ will be attained, only those residing in the Hall polytope. Let us briefly describe the main steps for proving the existence of a transport map of some measure $\mu$ to a discrete measure $\nu$ with weight vector in the interior of the Hall polytope (i.e. Theorem \ref{thm:discrete-transport}). Fixing a measure $\mu$ and an $m$-tuple $(u_i)_{i=1}^m$, the construction in 
Lemma \ref{fact:c-subgradient-of-discrete-inf} gives rise to a mapping $H$ from 
the $(m-1)$-dimensional simplex $\Delta_m$ onto the set of   `weight vectors' $\alpha=(\alpha_i)_{i=1}^m$ of the measure $\nu$ to which $\mu$ can be transported. 
We will show that $H$ is a surjection from the interior of the simplex onto the interior of the relevant Hall polytope. To this end we define and analyze Hall polytopes, and in particular construct, under some assumptions, a continuous map $R$ from the boundary of the polytope to the boundary of the simplex, which respects certain constraints connected with the face structure of the polytope.  We use a variant of Brouwer's fixed point theorem for the composition   $R\circ p \circ H$, where $p$ is a radial projection from some point in the polytope, to obtain the surjectivity.

\subsection{Structure of the Hall Polytope}

		We introduce the following notation: For $I \subset [m]$, $A\subset \RR^{|I|}$ and $B\subset \RR^{m-|I|}$, we denote by $A\times_I B$ points in $\RR^m$ with $I$-coordinates in $A$ and $I^c$-coordinates in $B$. For a measure $\mu$ and a set $A\subset X$ we denote by $\mu|_A$ the measure that is equal to $\mu$ on $A$ and zero on $A^c$.

	Hall polytopes have faces only in specific pre-determined directions.  (Their faces'  normal cones are spanned by $\{0,1\}$-vectors in $\RR^m$, projected onto the span of the polytope which is $(m-1)$ -dimensional.)
	As we shall see in Proposition \ref{prop:splitting}, each of these faces has a product structure, of which each component is a Hall polytope itself.  
		
\begin{prop}\label{prop:splitting}
	Let $P = P((u_i)_{i=1}^m,\mu)$ be the Hall polytope associated with some $m$-tuple 
	$(u_i)_{i=1}^m\subset Y$ and a 
	probability measure $\mu \in {\mathcal P}(X)$ supported on $\{x\in X: \exists i\in [m]\  c(x,u_i)<\infty\}$. 
	Then for each $I \subset [m]$, the face of $P$ given by
	\begin{align}\label{eq:def-F_I}
	    F_I = \{ \alpha \in P: \sum_I \alpha_i =  \mu(A_I)\},
	\end{align} 
	admits a splitting $ F_I = \mu(A_I)P_I \times_I \mu(A_I^c)\hat{P}_I $ where $P_I$ is the Hall polytope associated with the measure $\frac{1}{\mu(A_I)}\mu|_{A_I}$ and the vectors $\{u_i\}_{i\in I}$, and $ \hat{P}_I $ is  the Hall polytope associated with the measure $\frac{1}{\mu(A_I^c)}\mu|_{A_I^c}$ and the vectors $\{u_i\}_{i\in [m]\setminus I}$. In particular, in case $\mu(A_I)=0$, we have $P_I = \{0\}_I$, and in case $\mu(A_I^c)=0$, $\hat{P}_I = \{0\}_{[m]\setminus I}$. 
		\end{prop}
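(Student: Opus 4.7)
The plan is to verify both inclusions directly from the defining inequalities of the Hall polytope, using the fact that $A_J \subset A_I$ whenever $J \subset I$, and that $\mu(A_{I \cup J}) - \mu(A_I) = \mu(A_J \cap A_I^c)$, which are the two measure-theoretic identities that drive the product structure.

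First I would handle the main case $\mu(A_I),\mu(A_I^c)>0$. Take $\alpha\in F_I$ and define $\beta_i=\alpha_i/\mu(A_I)$ for $i\in I$ and $\gamma_i=\alpha_i/\mu(A_I^c)$ for $i\notin I$. The equality $\sum_{i\in I}\alpha_i=\mu(A_I)$ immediately puts $\beta\in\Delta_{|I|}$ and (since $\sum_i\alpha_i=1$) $\gamma\in\Delta_{m-|I|}$. To show $\beta\in P_I$, I must check, for every $J\subset I$, that $\sum_{i\in J}\beta_i\le\tfrac{1}{\mu(A_I)}\mu|_{A_I}(A_J)$; but $A_J\subset A_I$ so $\mu|_{A_I}(A_J)=\mu(A_J)$, and the inequality is just $\sum_{i\in J}\alpha_i\le\mu(A_J)$, which holds because $\alpha\in P$. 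To show $\gamma\in\hat P_I$, I must check, for every $J\subset[m]\setminus I$, that $\sum_{i\in J}\alpha_i\le\mu(A_J\cap A_I^c)$. I obtain this by writing
\[ \sum_{i\in J}\alpha_i=\sum_{i\in I\cup J}\alpha_i-\sum_{i\in I}\alpha_i\le \mu(A_{I\cup J})-\mu(A_I)=\mu(A_J\cap A_I^c), \]
using $\alpha\in P$ on $I\cup J$, the face equation on $I$, and the identity $A_{I\cup J}=A_I\cup A_J$ together with $A_{I\cup J}\setminus A_I=A_J\cap A_I^c$.

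For the reverse inclusion, given $\beta\in P_I$ and $\gamma\in\hat P_I$, assemble $\alpha\in\mathbb R^m$ by $\alpha_i=\mu(A_I)\beta_i$ on $I$ and $\alpha_i=\mu(A_I^c)\gamma_i$ on $[m]\setminus I$. Then $\sum_i\alpha_i=\mu(A_I)+\mu(A_I^c)=1$ and $\sum_{i\in I}\alpha_i=\mu(A_I)$ by construction, so it remains to verify the Hall inequality $\sum_{i\in K}\alpha_i\le\mu(A_K)$ for arbitrary $K\subset[m]$. Splitting $K=J_1\cupdot J_2$ with $J_1=K\cap I$, $J_2=K\setminus I$, the hypotheses on $\beta$ and $\gamma$ give
\[ \sum_{i\in K}\alpha_i\le\mu(A_{J_1})+\mu(A_{J_2}\cap A_I^c). \]
Since $A_{J_1}\subset A_I$ and $A_{J_2}\cap A_I^c\subset A_I^c$ these two sets are disjoint and both sit inside $A_{J_1}\cup A_{J_2}=A_K$, whence the right-hand side equals $\mu\bigl(A_{J_1}\cup(A_{J_2}\cap A_I^c)\bigr)\le\mu(A_K)$, finishing this direction.

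Finally I would dispose of the degenerate cases. If $\mu(A_I)=0$, then from $\alpha\in F_I$ the condition $\sum_{i\in I}\alpha_i=0$ with $\alpha\ge 0$ forces $\alpha_i=0$ for $i\in I$, matching the convention $P_I=\{0\}_I$; the remaining coordinates must lie in $\hat P_I$ by the same computation as above (which only uses $\mu(A_J\cap A_I^c)=\mu(A_J)$, valid here). The case $\mu(A_I^c)=0$ is symmetric. The main subtlety in the whole argument is the second measure identity, $\mu(A_{I\cup J})-\mu(A_I)=\mu(A_J\cap A_I^c)$, and its use in both directions; everything else is bookkeeping once one notices that faces $F_I$ are cut out by \emph{equalities} that decouple the $I$ and $I^c$ coordinates through exactly these two disjointness facts.
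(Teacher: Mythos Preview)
Your proof is correct and follows essentially the same approach as the paper's: both directions rely on the two set-theoretic facts $A_J\subset A_I$ for $J\subset I$ and $A_{I\cup J}\setminus A_I=A_J\cap A_I^c$, combined with the face equation $\sum_{i\in I}\alpha_i=\mu(A_I)$. The paper organizes the reverse inclusion via $\mu|_{A_I}(A_K)+\mu|_{A_I^c}(A_K)=\mu(A_K)$ while you use the equivalent disjointness of $A_{J_1}$ and $A_{J_2}\cap A_I^c$; and you make the degenerate cases $\mu(A_I)=0$ or $\mu(A_I^c)=0$ explicit where the paper does not, but otherwise the arguments coincide.
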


\begin{proof}
			Let $\alpha \in F_I$, then by the definition of $F_I$ we have $\sum_{i\in I} \alpha_i = \mu(A_I)$ and thus $\alpha|_I \in \mu(A_I)\Delta_{|I|}$. Furthermore, for every $J\subset I$ it still holds that $\sum_{i\in J} \alpha_i \le \mu(A_J)$ and as $A_J \subset A_I$ we also get that 
			$\sum_{i\in J} \alpha_i \le \mu|_{A_I}(A_J)$. Recall that  $P_I$ is the Hall polytope associated with $(u_i)_{i\in I}$  and $\mu(A_I)^{-1}\mu|_{A_I}$,  so re-normalizing the previous inequalities by $\mu(A_I)$ we see that the vector  
			$\alpha|_I \in \mu(A_I)P_I$, as claimed. 
			
			Similarly in the $I^c$ coordinates, $\alpha \in F_I$ satisfies $\sum_{i\in [m]\setminus I} \alpha_i = \mu(A_I^c)$. 
			To show $\alpha|_{[m]\setminus I}\in \mu(A_I^c) \hat{P}_I$ we need to check that for every $K\subset [m]\setminus I$ we have $\sum_{i\in K}\alpha_i \le \mu(A_K \cap A_I^c)$. To this end consider the new subset of $[m]$ given by $J = I \cup K$. By the assumptions, 
			\[ \sum_{i\in J} \alpha_i \le \mu (A_J) = \mu \big( \bigcup_{i\in I} \{x:\  c(x,u_i)<\infty \} \cup \bigcup_{i\in K} \{x:\  c(x,u_i)<\infty \} \big). \] 
			Since the first of these unions is in fact all of $A_I$, we may rewrite the inequality as 
			\[ \sum_{i\in J} \alpha_i \le  \mu (A_I) + \mu (A_I^c \cap \bigcup_{i\in K} \{x:\  c(x,u_i)<\infty\} ). \] 
			The sum on the left hand side is simply $\sum_{i\in I}\alpha_i + \sum_{i\in K} \alpha_i  = \mu(A_I)+ \sum_{i\in K} \alpha_i$, since we have assumed $\alpha \in F_I$. Plugging into the inequality and canceling, we see 
			\[ \sum_{i\in K} \alpha_i \le   \mu (A_I^c \cap \bigcup_{i\in K} \{x:\  c(x,u_i)<\infty \} ),  \]
			as claimed. 
			
			We have thus shown, so far, that $F_I \subset\mu(A_I)P_I \times_I \mu(A_I^c)\hat{P}_I $. For the opposite direction, assume we are given some point $\alpha \in \mu(A_I)P_I \times_I \mu(A_I^c)\hat{P}_I $, and we want to show that it belongs to $F_I$. Clearly, using that if $K\subset J$ then $A_K \subset A_J$,  we have for any $J\subset [m]$ that	  
			\begin{eqnarray*}\sum_{i\in J} \alpha_i &=& \sum_{i\in J\cap I} \alpha_i  + \sum_{i\in J\cap ([m]\setminus I)} \alpha_i \le \mu|_{A_I} (A_{J\cap I}) + \mu|_{A_I^c} (A_{J\cap ([m]\setminus I)})\\
				&\le& \mu|_{A_I} (A_{J}) + \mu|_{A_I^c} (A_{J})  = \mu(A_J). \end{eqnarray*}
			
			This completes the second part of the proof. 
		\end{proof}

		We will discuss the facial structure of the polytope, and make use of the following simple observation. 
		
		\begin{lem}\label{lem:bdofFI}
			Under the conditions and notations of Lemma \ref{prop:splitting}, for any $I\subset [m]$, the part of the boundary of $F_I$ given by $\mu(A_I)\partial P_I 
	\times_I \mu(A_I^c)\hat{P}_I$ is a subset of $\cup_{ J \subsetneq I}F_J$.		
		\end{lem}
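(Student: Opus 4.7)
The plan is to pick a point $\alpha$ in the given part of the boundary of $F_I$ and exhibit a proper subset $J\subsetneq I$ whose defining Hall equation is satisfied by $\alpha$. Decompose $\alpha=(\beta,\gamma)$ with $\beta=\alpha|_I\in\mu(A_I)\partial P_I$ and $\gamma=\alpha|_{[m]\setminus I}\in\mu(A_I^c)\hat P_I$. The degenerate case $\mu(A_I)=0$ is vacuous, since then $P_I=\{0\}$ is a single point with empty relative boundary; so assume $\mu(A_I)>0$ and let $\tilde\beta=\beta/\mu(A_I)\in\partial P_I\subset\Delta_{|I|}$.

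The polytope $P_I$ is described inside its affine hull $\{\sum_{i\in I}\tilde\beta_i=1\}$ by the non-negativity constraints $\tilde\beta_{i_0}\ge 0$ and by the Hall inequalities $\sum_{i\in J}\tilde\beta_i\le\mu(A_J)/\mu(A_I)$ for non-empty proper $J\subsetneq I$. Since $\tilde\beta$ lies on the relative boundary, at least one of these inequalities must be tight. If a Hall inequality is tight at some non-empty proper $J\subsetneq I$, multiplying through by $\mu(A_I)$ yields $\sum_{i\in J}\alpha_i=\sum_{i\in J}\beta_i=\mu(A_J)$, which is precisely the defining equation of $F_J$, so $\alpha\in F_J$. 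If instead $\tilde\beta_{i_0}=0$ for some $i_0\in I$ (which forces $|I|\ge 2$, since for $|I|=1$ the boundary is empty), then $\sum_{i\in I\setminus\{i_0\}}\tilde\beta_i=1$. Combined with the Hall inequality for $J=I\setminus\{i_0\}$ and the inclusion $A_{I\setminus\{i_0\}}\subset A_I$, this forces $\mu(A_{I\setminus\{i_0\}})=\mu(A_I)$, so that the Hall inequality for this $J$ is in fact tight, reducing to the previous case and giving $\alpha\in F_J$.

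The only real subtlety, and the main point to check, is that non-negativity facets $\{\tilde\beta_{i_0}=0\}$ do not produce genuinely new boundary behavior: each such point automatically sits on the Hall facet indexed by $J=I\setminus\{i_0\}$, by forcing the measure-equality $\mu(A_{I\setminus\{i_0\}})=\mu(A_I)$. Once this observation is made, both types of relative-boundary points of $P_I$ are handled uniformly by reading off the index set of the tight Hall inequality, which yields the desired containment $\alpha\in\bigcup_{J\subsetneq I}F_J$.
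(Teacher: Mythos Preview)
Your proof is correct and follows the same approach as the paper: pick $\alpha$ on this part of the boundary and identify a proper $J\subsetneq I$ for which the Hall equality $\sum_{i\in J}\alpha_i=\mu(A_J)$ holds. The paper's proof is in fact terser than yours, simply asserting that boundary points of $P_I$ satisfy some Hall equality for a proper $J\subsetneq I$; your additional treatment of the non-negativity facets $\tilde\beta_{i_0}=0$ (showing they force $\mu(A_{I\setminus\{i_0\}})=\mu(A_I)$ and hence coincide with a Hall facet) makes explicit a point the paper leaves implicit, namely that the simplex constraints are always dominated by, or coincide with, the Hall constraints indexed by $J=I\setminus\{i_0\}$.
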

		
		\begin{proof}
			The boundary of $P_I$ consists of points whose $I^{th}$ coordinates add up to one, and for some $J\subsetneq I$ one of the inequalities defining the  
			Hall polytope associated with $\frac{1}{\mu(A_I)}\mu|_{A_I}$ and $(u_i)_{i\in I}$ is an equality. In other words, if $\alpha \in \mu(A_I)\partial P_I 
			\times_I \mu(A_I^c)\hat{P}_I$ 			
			there is some $J\subsetneq I$ such that  $\sum_{i\in J}\alpha_i =  \mu(A_J)$, which means $\alpha \in F_J$, as claimed. 
		\end{proof}

		\subsection{Non-degenerate polytopes}\label{subsec:non-degenerate}
		In this subsection we continue analyzing properties of Hall polytopes, under an additional assumption on $\mu$ and $(\alpha_i)_{i=1}^m$ which will imply that all of the Hall polytopes' faces $F_I$ (defined in \eqref{eq:def-F_I}) are `full dimensional' in the $I$ coordinates, i.e.  that in the splitting described in Proposition \ref{prop:splitting}, the polytope $P_I$ is $|I|-1$ dimensional.

		\begin{definition}\label{def:non-degenerate-hall}
	Let $X,Y$ be measure spaces and let  $c:X\times Y \to (-\infty, \infty]$ be a measurable cost function. Given $(u_i)_{i=1}^m\subset Y$, and a %
	probability measure $\mu$  which is supported on $\{x\in X: \exists i\in [m]\  c(x,u_i)<\infty\}$, we say that $\mu$  is \emph{\bf {non-degenerate with respect to  $(u_i)_{i=1}^m$}}  if 
			for every $1\le i<j\le m$ it holds that
			\[ \mu \left(\{x: c(x,u_i)<\infty\} \cap \{x: c(x,u_j)<\infty\}\right)>0.\] 
		\end{definition}

	\begin{figure}[H]
	\begin{subfigure}{.33\textwidth}
		\centering
		\includegraphics[width=.8\linewidth]{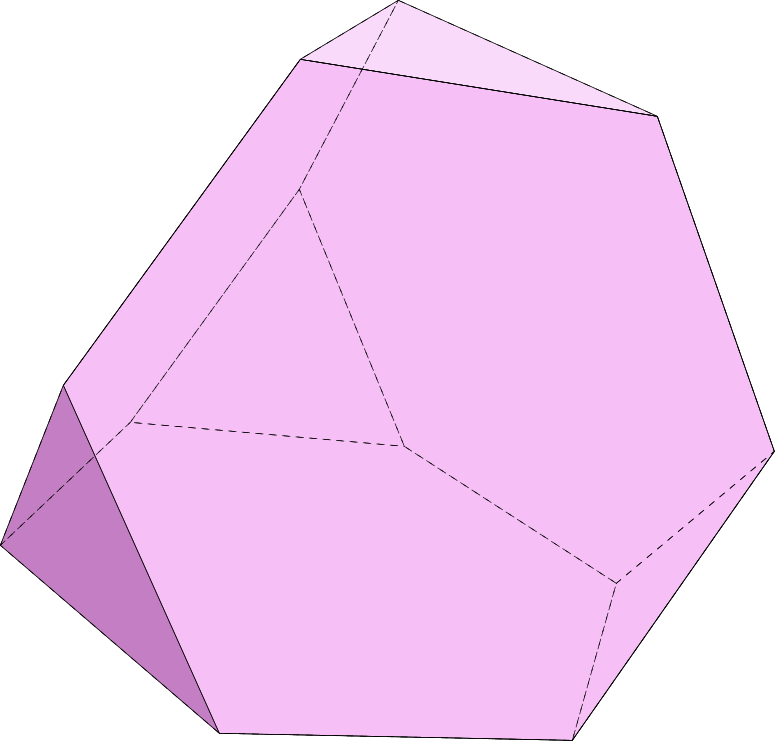}
		\caption{Non-degenerate}
		\label{fig:sfig1}
	\end{subfigure}%
	\begin{subfigure}{.33\textwidth}
		\centering
		\includegraphics[width=.8\linewidth]{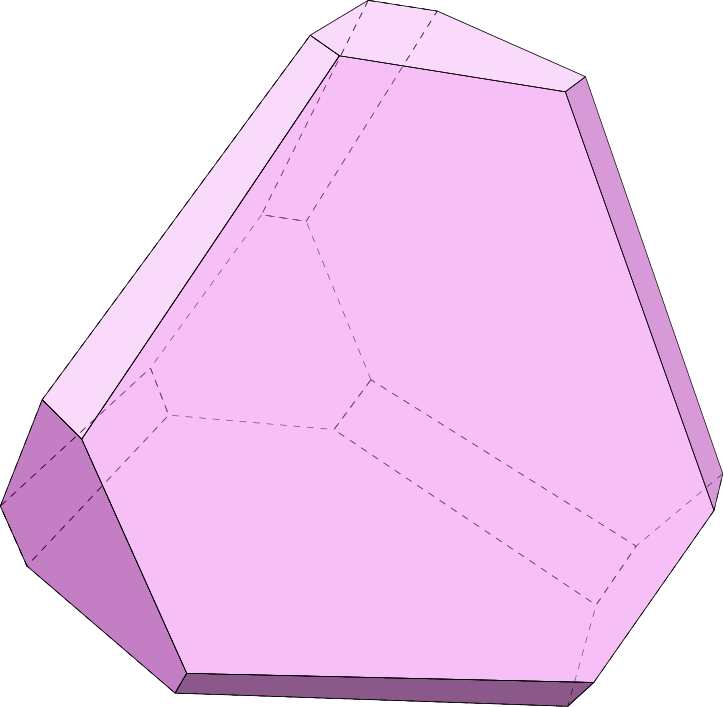}
		\caption{Non-degenerate}
		\label{fig:sfig2}
	\end{subfigure}
	\begin{subfigure}{.33\textwidth}
		\centering
		\includegraphics[width=.7\linewidth]{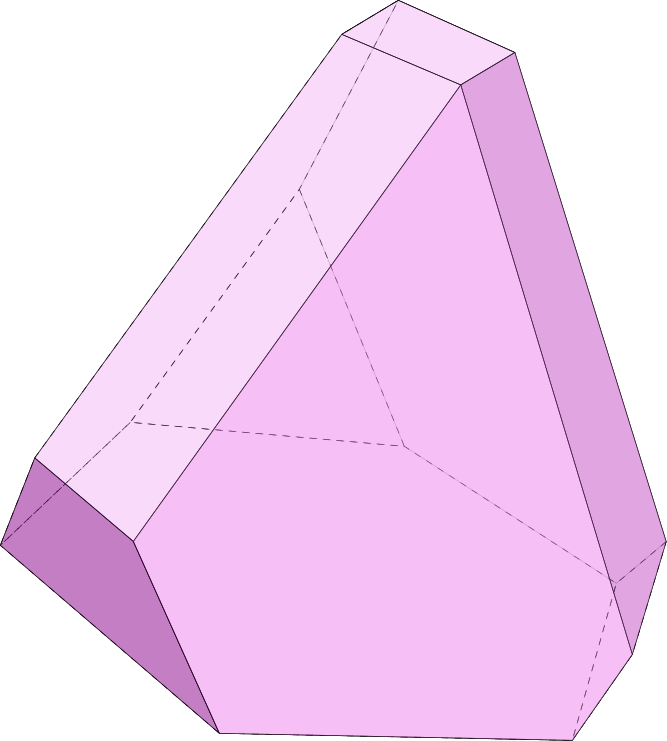}
		\caption{Degenerate}
		\label{fig:sfig3}
	\end{subfigure}
	\caption{Examples of 3-dimensional Hall polytopes}
	\label{fig:fig}
\end{figure}

		\begin{prop}\label{prop:Pfulldim}
			Given a %
			probability measure $\mu\in {\mathcal P}(X)$, which is non-degenerate with respect to $(u_i)_{i=1}^m \subset Y$, the Hall polytope $P = P((u_i)_{i=1}^m,\mu)$ satisfies that its dimension (meaning the dimension of its affine hull) is $\dim(P) = m-1$. 
		\end{prop}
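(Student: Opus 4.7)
The strategy is to show that $P$ has nonempty relative interior inside the hyperplane $\{\alpha : \sum_i \alpha_i = 1\}$; since $P \subset \Delta_m$ already gives $\dim P \le m-1$, this forces equality. Concretely, I aim to exhibit $\alpha \in P$ with $\alpha_i > 0$ for every $i \in [m]$ and $\sum_{i \in I}\alpha_i < \mu(A_I)$ for every $\emptyset \ne I \subsetneq [m]$ (the case $I = [m]$ automatically yields equality, since $\mu(A_{[m]}) = 1$).

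To build such an $\alpha$, I partition the support of $\mu$ by the finiteness pattern of the costs: for each nonempty $S \subset [m]$, set
\[
X_S = \{x \in X : c(x,u_i) < \infty \Leftrightarrow i \in S\},
\]
so that the sets $X_S$ are disjoint, $\mu$ is concentrated on $\bigsqcup_S X_S$, and $A_I = \bigsqcup_{S: S \cap I \ne \emptyset} X_S$. Splitting the mass of each $X_S$ uniformly among its admissible targets, I set
\[
\alpha_i = \sum_{S \ni i} \frac{\mu(X_S)}{|S|}.
\]
A short computation then gives $\sum_i \alpha_i = \sum_S \mu(X_S) = 1$ and
\[
\mu(A_I) - \sum_{i \in I} \alpha_i \;=\; \sum_{S: S \cap I \ne \emptyset} \mu(X_S)\,\frac{|S \setminus I|}{|S|} \;\ge\; 0,
\]
so $\alpha$ lies in $P$.

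It remains to upgrade each of these inequalities to a strict one using non-degeneracy. For any $\emptyset \ne I \subsetneq [m]$, fix $i \in I$ and $j \in [m] \setminus I$; by Definition \ref{def:non-degenerate-hall} the set $A_i \cap A_j$ has positive $\mu$-measure, and since $A_i \cap A_j = \bigsqcup_{S \supset \{i,j\}} X_S$, some $S$ containing both $i$ and $j$ has $\mu(X_S) > 0$. For this $S$ both $S \cap I$ and $S \setminus I$ are nonempty, so it contributes strictly to the sum above, giving $\sum_{i \in I}\alpha_i < \mu(A_I)$. The same mechanism, applied to any pair containing a chosen $i$, shows $\alpha_i > 0$. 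Hence $\alpha$ lies in the relative interior of $P$ inside $\{\sum\alpha_i = 1\}$, and so $\dim P = m-1$.

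The main obstacle is choosing the right decomposition: working with the $A_i$'s directly makes it hard to track which mass has the freedom to move between $I$ and $[m]\setminus I$, whereas the finer partition $\{X_S\}$ exposes precisely this degree of freedom, after which the pairwise non-degeneracy hypothesis immediately supplies the strict inequalities required.
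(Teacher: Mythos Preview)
Your proof is correct and takes a genuinely different route from the paper. The paper argues by induction on $m$: it uses the face splitting of Proposition~\ref{prop:splitting} to conclude (by the inductive hypothesis applied to $\mu|_{A_{[m-1]}}$) that $F_{[m-1]}$ has dimension $m-2$, and then uses non-degeneracy to check that the face $F_{\{m\}}$ cannot lie in the affine hull of $F_{[m-1]}$, forcing $\dim P \ge m-1$. Your argument, by contrast, is direct and constructive: the partition $\{X_S\}$ by finiteness pattern, together with the ``uniform splitting'' $\alpha_i = \sum_{S \ni i}\mu(X_S)/|S|$, produces an explicit relative-interior point of $P$, and the pairwise non-degeneracy hypothesis converts each inequality into a strict one via a single well-chosen $S$.

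Your approach is more elementary --- it avoids both the induction and the dependence on Proposition~\ref{prop:splitting} --- and yields the extra information that the relative interior of $P$ is nonempty by exhibiting a concrete witness. The paper's approach, on the other hand, is tied into the surrounding development: the same inductive viewpoint and face-splitting machinery are reused in Corollary~\ref{cor:face-structure} and Propositions~\ref{prop:intersection-style}--\ref{prop:nointersection}, so proving Proposition~\ref{prop:Pfulldim} that way keeps the section methodologically uniform. One small point worth making explicit in your write-up is that the support hypothesis on $\mu$ (built into Definition~\ref{def:non-degenerate-hall}) is what guarantees $\sum_{S\neq\emptyset}\mu(X_S)=1$.
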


		\begin{proof}
			We shall prove this fact using induction on $m$. For $m = 1$ this is clearly true since 
			the polytope $P$ consists of one point $\alpha = 1$, that is, has dimension $0$. 
			Assume that the claim is true for $(m-1)$-tuples. Then, for $m$ and a given set of vectors $(u_i)_{i=1}^m \subset Y$, we know by Proposition \ref{prop:splitting} that $P$ has faces $F_I$ where $I\subset [m]$, each of the form $F_I = \mu(A_I)P_I \times_I \mu(A_I^c)\hat{P}_I$. Let $I_1 = [m-1]$ and $I_2 = \{m\}$. The set $[m-1]$ still satisfies, along with $\mu|_{I}$, the conditions of the proposition, so by the inductive assumption $P_1 = P_{I_1}$ is a polytope of full dimension, that is, of dimension $m-2$.
			
			It remains to show that $F_2 = F_{I_2}$ does not lie within the affine hull of $F_1$, and hence $P$ has dimension at least $m-1$ (and of course it cannot have a higher dimension, as it is a subset of $\Delta_m$).
			Note that the affine hull of  $F_1$ is characterized by the equality $\sum_{i=1}^{m-1} \alpha_i = \mu (A_{[m-1]})$, which equivalently can be written as $ \alpha_{m} = 1-\mu (A_{[m-1]})$. The facet $F_2$ satisfies $\alpha_m = \mu(A_{\{m\}})$.
			Assuming towards a contradiction that these two facets do intersect, we would need to have $\mu(A_{[m-1]}) + \mu (A_{\{m\}}) = 1$. Recall that 
			\[A_{\{m\}} = \{x\in X:\ c(x,u_m) < \infty\},\ A_{[m-1]} = \{x\in X:\ \exists i \in [m-1]\ \ c(x,u_i) < \infty \}\]
			and that by the non-degeneracy of $\mu$ it holds that $\mu(A_{\{m\}} \cap A_{[m-1]})>0$. Additionally,  $\mu(A_{\{m\}} \cup A_{[m-1]}) =1$, and so
			\[\mu(A_{\{m\}} \cup A_{[m-1]}) = \mu(A_{\{m\}}) + \mu(A_{[m-1]}) - \mu(A_{\{m\}} \cap A_{[m-1]})\]
			implies that $\mu(A_{\{m\}} \cap A_{[m-1]})=0$ (So in particular $\mu(A_{\{m\}} \cap A_{\{1\}})=0$), contradicting the assumption that $\mu$ is non-degenerate.
		\end{proof}

		\begin{cor}\label{cor:face-structure}
			Given a	probability measure $\mu\in {\mathcal P}(X)$ which is %
			non-degenerate with respect to $(u_i)_{i=1}^m \subset Y$, the Hall polytope $P = P((u_i)_{i=1}^m, \mu)$ satisfies that each face $F_I$ admits a splitting $F_I = P_I \times \hat{P}_I$ such that $\dim(P_I) = |I|-1$. 	
		\end{cor}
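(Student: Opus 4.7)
The plan is to reduce the corollary to a direct application of Proposition \ref{prop:Pfulldim} inside each face, using the product structure already provided by Proposition \ref{prop:splitting}. First I would invoke Proposition \ref{prop:splitting} to realize
\[F_I = \mu(A_I)\,P_I \times_I \mu(A_I^c)\,\hat{P}_I,\]
where $P_I$ is the Hall polytope associated with $(u_i)_{i\in I}$ and the renormalized measure $\tfrac{1}{\mu(A_I)}\mu|_{A_I}$ (and $\hat P_I$ is the analogous object on the complement). Since rescaling by a positive scalar preserves affine dimension, and since the two factors live in complementary coordinate subspaces, the desired conclusion $\dim(P_I)=|I|-1$ reduces to computing $\dim$ of the Hall polytope of the restricted problem.

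Before applying Proposition \ref{prop:Pfulldim}, I need to check two things: that $\mu(A_I)>0$ so the renormalization makes sense, and that $\tfrac{1}{\mu(A_I)}\mu|_{A_I}$ is itself non-degenerate with respect to $(u_i)_{i\in I}$. For the first, pick any $i\in I$ and any $j\neq i$ in $[m]$; non-degeneracy of $\mu$ with respect to $(u_i)_{i=1}^m$ gives $\mu(\{c(\cdot,u_i)<\infty\}\cap\{c(\cdot,u_j)<\infty\})>0$, so in particular $\mu(A_{\{i\}})>0$ and hence $\mu(A_I)>0$. For the second, note that for any distinct $i,j\in I$, the set
\[\{x: c(x,u_i)<\infty\}\cap\{x: c(x,u_j)<\infty\}\]
is contained in $A_I$, so its $\mu|_{A_I}$-measure equals its $\mu$-measure, which is positive by the non-degeneracy of $\mu$. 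Dividing by $\mu(A_I)>0$ preserves positivity, giving the required non-degeneracy for the restricted measure.

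With these preliminaries in hand, Proposition \ref{prop:Pfulldim} applied to the $|I|$-tuple $(u_i)_{i\in I}$ and the measure $\tfrac{1}{\mu(A_I)}\mu|_{A_I}$ (playing the role of the ambient $m$-tuple and probability measure in that proposition) yields $\dim(P_I)=|I|-1$, which is what we need. Absorbing the positive scalar $\mu(A_I)$ into $P_I$ and $\mu(A_I^c)$ into $\hat P_I$ (and interpreting a factor $\{0\}$ of dimension $-1$ trivially when $\mu(A_I^c)=0$) gives the splitting $F_I=P_I\times\hat P_I$ in the form stated. I do not expect a real obstacle here: the content of the corollary is the stability of non-degeneracy under restriction to $A_I$, which is immediate from the definitions once one observes the containment used above.
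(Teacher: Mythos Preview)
Your proposal is correct and follows essentially the same approach as the paper: invoke Proposition~\ref{prop:splitting} for the splitting, verify that the restricted measure $\tfrac{1}{\mu(A_I)}\mu|_{A_I}$ is non-degenerate with respect to $(u_i)_{i\in I}$, and then apply Proposition~\ref{prop:Pfulldim}. The paper's proof merely says ``one may easily check'' the non-degeneracy of the restriction, whereas you spell out that check (and the positivity of $\mu(A_I)$) explicitly.
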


		\begin{proof}
			The fact that $F_I$ has such a splitting was proven already in Proposition \ref{prop:splitting}, with $P_I$ being the Hall polytope of the normalized restriction of the measure $\mu$ to $A_I$. $\mu_I$ satisfies, together with the subset $(u_i)_{i \in I}$, conditions of Proposition \ref{prop:Pfulldim}, namely that it is non-degenerate with respect to $(u_i)_{i\in I}$ as one may easily check that $\mu|_{A_I}$ is non-degenerate with respect to $(u_i)_{i\in I}$.
			Therefore, $P_I$ is full dimensional, as claimed. 
		\end{proof}

	 	Furthermore, in this case the associated polytope satisfies a ``good" face-intersection structure, explained in the next two propositions.  
		
\begin{prop}\label{prop:intersection-style}
Given a	probability measure $\mu\in {\mathcal P}(X)$ supported on $\{x\in X: \exists i\in [m]\  c(x,u_i)<\infty\}$ which is %
non-degenerate with respect to $(u_i)_{i=1}^m \subset Y$, let $P = P((u_i)_{i=1}^m, \mu)$ be the associated Hall polytope.
	Given $I,J\subset [m]$, the intersection $F_I \cap F_J$ is a subset of $F_{I\cap J}$ (we let $F_\emptyset = P$, so that if $I\cap J = \emptyset$ the claim is trivial).
		\end{prop}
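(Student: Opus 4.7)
Take any $\alpha \in F_I \cap F_J$; the goal is to show $\sum_{i \in I \cap J}\alpha_i = \mu(A_{I \cap J})$, since this is precisely what it means for $\alpha$ to lie in $F_{I\cap J}$. The trivial cases $I \cap J = \emptyset$ (where we set $F_\emptyset = P$) and $I \subset J$ or $J \subset I$ can be dispensed with first. In the generic case, the natural identity to exploit is the inclusion--exclusion on the \emph{sums}:
\[
\sum_{i \in I}\alpha_i \;+\; \sum_{i \in J}\alpha_i \;=\; \sum_{i \in I\cup J}\alpha_i \;+\; \sum_{i \in I\cap J}\alpha_i.
\]
By hypothesis the left-hand side equals $\mu(A_I)+\mu(A_J)$, while by the defining constraints of the Hall polytope the right-hand side is bounded above by $\mu(A_{I\cup J}) + \mu(A_{I\cap J})$.

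The key structural observation about the sets $A_I$ is that $A_I$ is defined by an existential quantifier over $I$, so
\[
A_{I \cup J} \;=\; A_I \cup A_J, \qquad\text{whereas in general only}\qquad A_{I \cap J} \;\subset\; A_I \cap A_J.
\]
Combining these with the standard inclusion--exclusion identity for the measure $\mu$, we get
\[
\mu(A_{I\cup J}) + \mu(A_{I\cap J}) \;\le\; \mu(A_I \cup A_J) + \mu(A_I \cap A_J) \;=\; \mu(A_I) + \mu(A_J).
\]
Chaining the two estimates yields a sandwich
\[
\mu(A_I)+\mu(A_J) \;=\; \sum_{i\in I\cup J}\alpha_i + \sum_{i\in I\cap J}\alpha_i \;\le\; \mu(A_{I\cup J}) + \mu(A_{I\cap J}) \;\le\; \mu(A_I)+\mu(A_J),
\]
forcing equality throughout. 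In particular each of the polytope constraints $\sum_{i\in I\cup J}\alpha_i \le \mu(A_{I\cup J})$ and $\sum_{i \in I\cap J}\alpha_i \le \mu(A_{I\cap J})$ must be saturated, so $\alpha \in F_{I\cup J} \cap F_{I\cap J} \subset F_{I\cap J}$.

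The whole argument is essentially a one-line inclusion--exclusion sandwich once one notices the asymmetry between $A_{I\cup J}$ (which equals $A_I \cup A_J$ \emph{on the nose}) and $A_{I\cap J}$ (which is only contained in $A_I \cap A_J$). The only point requiring care is recognising that the two polytope inequalities being summed go in the right direction to be pinched by the measure-theoretic inequality; that is the single substantive step, and no non-degeneracy or $c$-regularity hypothesis is used.
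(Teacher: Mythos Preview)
Your proof is correct and is essentially identical to the paper's: both use the inclusion--exclusion identity $\sum_{i\in I}\alpha_i+\sum_{i\in J}\alpha_i=\sum_{i\in I\cup J}\alpha_i+\sum_{i\in I\cap J}\alpha_i$, combine the equalities from $\alpha\in F_I\cap F_J$ with the Hall polytope constraints for $I\cup J$ and $I\cap J$, and use $A_{I\cap J}\subset A_I\cap A_J$ together with $A_{I\cup J}=A_I\cup A_J$ to close the sandwich. Your remark that non-degeneracy is not actually used is also accurate---the paper's argument makes no appeal to it either.
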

		
		\begin{proof}
			Let $\alpha \in F_I \cap F_J$, so that $\sum_{i\in I} \alpha_i = \mu(A_I)$ and 
			$\sum_{i\in [m]\setminus I} \alpha_i = \mu(A_I^c)$, as well as   $\sum_{i\in J} \alpha_i = \mu(A_J)$ and 
			$\sum_{i\in [m]\setminus J} \alpha_i = \mu(A_J^c)$. Consider the following equation 
			\begin{eqnarray*}
				\sum_{i\in I\cap J} \alpha_i + \sum_{i\in I \cup J} \alpha_i &=& 	\sum_{i\in I} \alpha_i + \sum_{i\in J} \alpha_i  = \mu (A_I) + \mu(A_J)  \\ &=& \mu(A_I \cap A_J) + \mu(A_I \cup A_J) \ge \mu(A_{I\cap J})+ \mu(A_{I \cup J}),
			\end{eqnarray*}
			where the final inequality follows from the inclusion $A_{I \cap J} \subset A_I \cap A_J$.
			
			Pairing this with the fact that each of the extreme terms satisfies that
			\[\sum_{i\in I\cap J} \alpha_i \le \mu(A_{I\cap J})\quad {\rm and}\quad
			\sum_{i\in I \cup J} \alpha_i\le \mu(A_{I \cup J}),
			\]
			we conclude that both of these inequalities are in fact equalities, which implies that 
			\[\sum_{i\in I\cap J} \alpha_i = \mu(A_{I\cap J}),
			\]
			so that $\alpha$ belongs to the facet $F_{I\cap J}$. 
		\end{proof}
		
		In fact, if $\mu$ is non-degenerate and $I\cap J = \emptyset$, we know much more.
		
\begin{prop}\label{prop:nointersection}
Given a	probability measure $\mu\in {\mathcal P}(X)$ supported on $\{x\in X: \exists i\in [m]\  c(x,u_i)<\infty\}$ which is 
non-degenerate with respect to $(u_i)_{i=1}^m \subset Y$, let $P = P((u_i)_{i=1}^m, \mu)$ be the associated Hall polytope.
Then given $I,J\subset [m]$, which are disjoint,  the  faces $F_I $ and $ F_J$ do not intersect. 
		\end{prop}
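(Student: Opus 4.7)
My plan is to argue by contradiction, mirroring the computation in Proposition \ref{prop:intersection-style} but now exploiting disjointness of $I$ and $J$ to force $\mu(A_I \cap A_J) = 0$, which will directly contradict non-degeneracy.

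Suppose $\alpha \in F_I \cap F_J$ for some disjoint $I,J \subset [m]$. By the definition \eqref{eq:def-F_I} of these faces, we have $\sum_{i\in I}\alpha_i = \mu(A_I)$ and $\sum_{i\in J}\alpha_i = \mu(A_J)$. Since $I \cap J = \emptyset$, summing yields
\[
\sum_{i \in I \cup J} \alpha_i = \sum_{i \in I}\alpha_i + \sum_{i \in J}\alpha_i = \mu(A_I) + \mu(A_J).
\]
On the other hand, since $\alpha \in P$, the defining inequality for $I \cup J$ gives
\[
\sum_{i \in I \cup J} \alpha_i \le \mu(A_{I \cup J}) = \mu(A_I \cup A_J) = \mu(A_I) + \mu(A_J) - \mu(A_I \cap A_J).
\]
Combining the two displays forces $\mu(A_I \cap A_J) \le 0$, hence $\mu(A_I \cap A_J) = 0$.

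Now I would derive the contradiction from non-degeneracy. Pick any $i_0 \in I$ and $j_0 \in J$ (which is possible since $F_I$ and $F_J$ are nonempty faces, so $I,J$ are nonempty; if either is empty the statement is trivial as we set $F_\emptyset = P$ and the claim becomes vacuous in that the other face intersects $P$). Then
\[
\{x : c(x,u_{i_0}) < \infty\} \cap \{x : c(x,u_{j_0}) < \infty\} \subset A_I \cap A_J,
\]
so its $\mu$-measure is $0$, contradicting the non-degeneracy assumption on $\mu$ with respect to $(u_i)_{i=1}^m$. Hence no such $\alpha$ exists and $F_I \cap F_J = \emptyset$.

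I do not expect any real obstacle here; the key move is the one already isolated in Proposition \ref{prop:intersection-style}, namely turning the equalities on $F_I$ and $F_J$ into an equality in the supermodularity/submodularity chain for the set map $I \mapsto \mu(A_I)$. Disjointness collapses $I \cap J$ to $\emptyset$ (with $A_\emptyset = \emptyset$) and makes the slack exactly $\mu(A_I \cap A_J)$, which non-degeneracy then forbids from vanishing.
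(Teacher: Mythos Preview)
Your argument is correct and in fact more direct than the paper's. Both proofs start the same way: assume $\alpha\in F_I\cap F_J$ and use disjointness to get $\sum_{i\in I\cup J}\alpha_i=\mu(A_I)+\mu(A_J)$, together with the polytope inequality $\sum_{i\in I\cup J}\alpha_i\le\mu(A_{I\cup J})$. You then immediately apply inclusion--exclusion to write $\mu(A_{I\cup J})=\mu(A_I)+\mu(A_J)-\mu(A_I\cap A_J)$, deduce $\mu(A_I\cap A_J)=0$, and contradict non-degeneracy by picking $i_0\in I$, $j_0\in J$. The paper instead takes a longer route: it first establishes $\mu(A_K)=\mu(A_I)+\mu(A_J)$ for $K=I\cup J$ (invoking Proposition~\ref{prop:Pfulldim} to guarantee $F_K\neq\emptyset$), concludes $F_I\cap F_J=F_K$, and then obtains the contradiction by a dimension count, again via Proposition~\ref{prop:Pfulldim} and Corollary~\ref{cor:face-structure}. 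Your proof avoids both the dimension argument and the appeal to these auxiliary results, so it is self-contained from the definitions.

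One small wrinkle: your parenthetical about $I=\emptyset$ is not quite right. If $I=\emptyset$ then $F_I=P$ and the conclusion $F_I\cap F_J=\emptyset$ is actually \emph{false}, not vacuous. The proposition (and the paper's proof) tacitly assumes $I,J$ are nonempty, which is the only case ever used downstream.
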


		\begin{proof}
			
			By non-degeneracy of $P=P((u_i)_{i=1}^m, \mu)$, we know that for any $I$, the face $F_I = \mu(A_I)P_I \times_I \mu(A_I^c)\hat{P}_I$ satisfies that $\dim (P_I) = |I|-1$.
			Assume $|I|= k_1,|J| = k_2$, and, towards a contradiction, that the intersection $F_I \cap F_J$ is non-empty. Denoting $\beta_I=\mu(A_I)$ and $\beta_J=\mu(A_J)$, every point $\alpha$ in the intersection must satisfy 
			$\sum_{i\in I} \alpha_i = \beta_I$ and $\sum_{i\in J}\alpha_i = \beta_J$. Letting $K = I \cup J$, $\beta_K = \mu(A_K)$, by the fact that $I$ and $J$ are disjoint, we see that $\sum_{i\in K} \alpha_i = \beta_I+ \beta_J$, and since $\alpha$ is a point in $P$, $\beta_I+ \beta_J\le \beta_K$. However, using again that $I\cap J = \emptyset$, we know that all points $\alpha\in P$ also satisfy $\sum_{i\in K} \alpha_i \le \beta_I+ \beta_J$, and since by Proposition \ref{prop:Pfulldim} $F_K$ is non-empty (it is full dimensional in its $I$ coordinates), there exists some $\alpha \in F_K$ such that the equality $\sum_{i\in K} \alpha_i = \beta_K$ is satisfied. This implies $\beta_K= \beta_I+ \beta_J$. We conclude that $F_I \cap F_J =F_K$. Indeed, $F_K \subset F_I \cap F_J$, since each such $\alpha \in P$ satisfies $\sum_{i\in I} \alpha_i \le \beta_I$ and  $\sum_{i\in J} \alpha_i \le \beta_J$, and for points in $F_K$ an equality must be attained in both inequalities. The reverse inclusion $F_I \cap F_J \subset F_K$ is clear.
			
			However, by Proposition \ref{prop:Pfulldim} the dimension of $P_I$ is $k_1-1$ and the dimension of $P_J$ is $k_2 - 1$. Recalling that $K= I \cup J$, we see that the dimension of $P_K$ is at most  $k_1 - 1 +k_2 - 1 < k_1 + k_2 -1=|K|-1$, which contradicts the non-degeneracy assumption on $P$, and implies that the intersection must be empty.
			
		\end{proof}

\subsection{Mapping the Hall polytope to the simplex} \label{subsec:R-map}	
			
In this subsection we make one final preparation, and show that for any Hall polytope $P$, associated with a non-degenerate measure and some   $m$-tuple, there exists a special mapping $R$ from $\partial P$ to $\partial\Delta_m$  such that $F_I$ is mapped to $\pa_I \Delta_m$, and on $F_I$, the map only depends on the $I$ coordinates of a point. %
			
			Let us explain the notation. The relative boundary of the simplex (its boundary in the affine space $\{\alpha \in \RR^m:\ \sum_{i=1}^m \alpha_i = 1\}$) will be denoted by $\pa \Delta_m$, and the $I^{\rm th}$ component of this boundary is the lower dimensional simplex defined by
			\[\pa_I \Delta_m=\{\alpha \in \Delta_m: \ \sum_{i\in I} \alpha_i = 1\}.\]
			Additionally, for $I\subset [m]$ we say that `a point $x\in \RR^m$ has $I^{th}$ coordinates $y\in \RR^{|I|}$' if the restriction of $x$ to its coordinates indexed by $I$ is equal to $y$.
			
			\begin{prop}\label{prop:mapping-P-to-simplex}
Given a	probability measure $\mu\in {\mathcal P}(X)$ supported on $\{x\in X: \exists i\in [m]\  c(x,u_i)<\infty\}$ which is 
non-degenerate with respect to $(u_i)_{i=1}^m \subset Y$, let $P = P((u_i)_{i=1}^m, \mu)$ be the associated Hall polytope.
		Then there exists 	 a continuous mapping $R: \partial P \to \partial \Delta_m$ such that
				$\partial_I P : = F_I =\mu(A_I)P_I \times_I \mu(A_I^c)\hat{P}_I$ is mapped to $\partial_I \Delta_m$  with
				\begin{equation}\label{eq:R-mapping-requirement}
				R(y,z)=R(y,z')
				\end{equation}
				for $y\in \mu(A_I)P_I, z, z'\in \mu(A_I^c) \hat{P}_I$, that is $R(x)=R(x')$ if $x|_{I^c} = x'|_{I^c}$.
			\end{prop}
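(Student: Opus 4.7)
The plan is to build $R$ by induction on $k = |I|$, defining it face by face, while maintaining the invariant: at stage $k$, $R$ is a continuous map from $\bigcup_{1 \le |J| \le k} F_J$ into $\bigcup_{1 \le |J| \le k} \partial_J \Delta_m$ with $R(F_J) \subset \partial_J \Delta_m$, and on each $F_J$, $R$ depends only on the $J$-coordinates. For the base case $k=1$, set $R \equiv e_i$ on $F_{\{i\}}$; this is trivially continuous, independent of the $\{i\}^c$-coordinates, and maps into the single point $\partial_{\{i\}}\Delta_m$.

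For the inductive step, fix $I \subset [m]$ with $|I| = k$. By Proposition \ref{prop:splitting}, $F_I = \mu(A_I) P_I \times_I \mu(A_I^c) \hat{P}_I$, and by Corollary \ref{cor:face-structure}, $\dim(P_I) = |I|-1$. Lemma \ref{lem:bdofFI} places the set $\mu(A_I)\partial P_I \times_I \mu(A_I^c) \hat{P}_I$ inside $\bigcup_{J \subsetneq I} F_J$, where $R$ is already defined. On each sub-face $F_J$ with $J \subsetneq I$, the previously-defined $R$ depends only on the $J$-coordinates, which form part of the $I$-coordinates; hence $R$ on this set descends to a continuous map $r_I^\partial : \mu(A_I) \partial P_I \to \partial(\partial_I \Delta_m)$ sending each boundary sub-face $\mu(A_I) F_{I,J}$ of $P_I$ into $\partial_J \Delta_m \subset \partial(\partial_I \Delta_m)$. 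Now fix arbitrary interior points $q_I \in \operatorname{int}(\mu(A_I) P_I)$ and $p_I \in \operatorname{int}(\partial_I \Delta_m)$; both exist because $\dim P_I = \dim \partial_I \Delta_m = |I|-1$. Extend $r_I^\partial$ to $r_I : \mu(A_I) P_I \to \partial_I \Delta_m$ by coning: every $y \ne q_I$ lies on a unique ray from $q_I$ meeting $\partial(\mu(A_I) P_I)$ at a unique point $\rho(y)$ with $y = (1-s(y)) q_I + s(y) \rho(y)$ for a unique $s(y) \in (0,1]$, and set $r_I(y) = (1-s(y)) p_I + s(y) r_I^\partial(\rho(y))$, together with $r_I(q_I) = p_I$. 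Convexity of $\partial_I \Delta_m$ places $r_I(y)$ in $\partial_I \Delta_m$; continuity is straightforward, and $r_I$ restricts to $r_I^\partial$ on $\mu(A_I) \partial P_I$. Finally, set $R|_{F_I}(y, z) := r_I(y)$, which depends only on the $I$-coordinates.

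Consistency on overlaps is automatic: for distinct $I, I'$ with $|I|, |I'| \le k$, if $I \cap I' = \emptyset$ then Proposition \ref{prop:nointersection} gives $F_I \cap F_{I'} = \emptyset$; otherwise Proposition \ref{prop:intersection-style} gives $F_I \cap F_{I'} \subset F_{I \cap I'}$, and by construction both $r_I$ and $r_{I'}$ restricted to this intersection agree with the previously-defined $R|_{F_{I \cap I'}}$ (since the boundary data used to define $r_I$ and $r_{I'}$ were inherited from the earlier inductive stage). Continuity of the assembled $R$ on $\partial P = \bigcup_{1 \le |I| \le m-1} F_I$ then follows from continuity on each (closed) face $F_I$ together with agreement on their finitely many intersections. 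The main technical care lies in verifying that the previously-defined $R$ on $\bigcup_{J \subsetneq I} F_J$ assembles into a single continuous $r_I^\partial$ on the $P_I$-boundary of $F_I$ that depends only on the $I$-coordinates, so that the coning extension is legitimate; this is precisely what the face-splitting (Proposition \ref{prop:splitting}), the boundary inclusion (Lemma \ref{lem:bdofFI}), and the intersection propositions (Propositions \ref{prop:intersection-style} and \ref{prop:nointersection}) combine with the inductive invariant to guarantee.
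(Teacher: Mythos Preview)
Your proof is correct and follows essentially the same approach as the paper's: both argue by induction on $|I|$, map $F_{\{i\}}$ to the vertex $e_i$ in the base case, use Lemma~\ref{lem:bdofFI} to see that $R$ is already defined on $\mu(A_I)\partial P_I \times_I \mu(A_I^c)\hat P_I$, then extend radially (your ``coning'') from an interior point of $\mu(A_I)P_I$ to an interior point of $\partial_I\Delta_m$, with Propositions~\ref{prop:intersection-style} and~\ref{prop:nointersection} ensuring consistency on overlaps. The only cosmetic differences are that the paper uses centers of mass for the cone points (you allow arbitrary interior points, which works equally well) and that your notation for the interior points $p_I,q_I$ is swapped relative to the paper's.
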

			
	\begin{proof}
	The construction of $R$ is recursive. 
	We define the map first only on faces  $F_I$ with $|I| = 1$. We then assume it has been defined on faces $F_I$ with $|I|<k$ and define it on $F_I$ with $|I| = k$. At each step we  make sure the map we construct is well defined and continuous on its domain. 
				
	We denote the center of mass of the face $\partial_I \Delta_m$ by $q_I$ and the center of mass of the polytope $\mu(A_I)P_I$ (the $I$-th component of $F_I$) by $p_I$. We will ensure, within the proof, that all points in $F_I$ with $I^{th}$ coordinates $p_I$  are mapped to $q_I$, and that in general the map on a face $F_I$ depends only on the $I^{th}$ coordinates of the point. 
				
	The basis for the construction are thus faces $F_I$ of $P$ with $|I| = 1$. These are mapped to the vertices of the simplex $\Delta_m$, namely $F_{\{i\}} \mapsto \partial_{\{i\}} \Delta_m$ (where $\partial_{\{i\}} \Delta_m= e_i$). As these faces are disjoint by Proposition \ref{prop:intersection-style}, and the map $R$ is constant on each face, we conclude that it is continuous.

For the   induction step, assume we have defined $R$ on all faces $F_J$ with $|J|<k$. Let $F_I$ be a face of $P$ with $|I|=k$. Since $F_I = \mu(A_I)P_I \times_I \mu(A_I^c)\hat{P}_I$  by Proposition \ref{prop:splitting}, we have that 
\[\pa F_I = \left( \mu(A_I){\rm relint} (P_{I}) \times \mu(A_I^c) \partial \hat{P}_{I} \right) \cup \left( \mu(A_I) \partial   P_{I} \times  \mu(A_I^c) \hat{P}_{I} \right). 
				\]
Since $R$ is already defined, by assumption, on all faces $F_J$ for $I\neq J\subset I$, and since  $ \mu(A_I) \partial    P_I \times  \mu(A_I^c)  \hat{P}_I \subset \bigcup_{J\subsetneqq I} F_J$ by Lemma \ref{lem:bdofFI}, the map $R$ is already defined on this first component of the boundary. Furthermore, again by assumption, it is defined in such a way that the image of $F_J$ is the simplex $\partial_J \Delta_m$, and that on $F_J$ the map $R$ only depends on the $J^{th}$ coordinates of the point. Note that $\bigcup_{J\subsetneqq I} \pa_J\Delta_m $ is precisely the boundary of $\partial_I \Delta_m$. So, we essentially are given a continuous mapping $R$ from the boundary of $P_I$ to the boundary of $\partial_I\Delta_m$.  We extend it by first imposing $R(p_I, z) = q_I$  for the specified points $p_I$ and $q_I$ (note that $(p_I, z)\in F_I$ lies in $\mu(A_I){\rm relint} (P_{I})$, as $P_I$ is full, i.e. $I-1$, dimensional), and then extending $R$ radially for points with $I^{th}$ coordinates in $\mu(A_I){\rm relint} (P_{I})$. The resulting map $R$ is now defined on all of $F_I$. We do this for all index sets $I$ of size $k$. The resulting map is well defined, since by Propositions \ref{prop:intersection-style} and \ref{prop:nointersection} the  intersections of the faces $\{F_I\}_{|I|=k}$ are included in faces $F_J$ with $|J|<k$. By construction $R$ is a continuous mapping that sends $F_I$ to $\pa_I \Delta_m$ and, on $F_I$, depends only on the $I^{th}$ coordinates.

			\end{proof}

	As we described in Subsection \ref{subsection: building transport maps}, the main idea of the proof of  Theorem \ref{thm:discrete-transport} is to show surjectivity of a map taking a potential function $\varphi$ (indexed by some variables $(t_i)\in \Delta_m$) to the weight vector $\alpha$ of the measure $\nu$ to which the $c$-subgradient $\partial^c\varphi$ maps $\mu$. We present this formally in the next subsection, where we define and analyse this map.

\subsection{Mapping the simplex to the Hall polytope} \label{subsec:H-map}

Having fixed some $m$-tuple $(u_i)_{i=1}^m\subset Y$ and a 
probability measure $\mu \in {\mathcal P}(X)$ supported on $\{x\in X: \exists i\in [m]\  c(x,u_i)<\infty\}$ and $c$-regular, 
we define a map on the interior of $\Delta_m$, and then extend it (using converging subsequences) to a set-valued map on the boundary.

More precisely, for $t=(t_i)_{i=1}^m \in {\rm int} (\Delta_m)$, define
			\begin{equation}\label{eq:def-H}
			H_{(u_i)_{i=1}^m}^\mu (t)= \alpha
			\end{equation}
			with $\alpha \in \Delta_m$ given by
			\[\alpha_i = \mu\left( \left\{ x\in X:\ \arg\min_{1\leq j\leq m} c(x,u_j) -\ln(t_j) = i \right\} \right).\] 
			For $t\in \pa \Delta_m$, we let $H_{(u_i)_{i=1}^m}^\mu (t)$ be the closure of the function in the usual sense, namely the set of all limit points $\lim H_{(u_i)_{i=1}^m}^\mu (t^{(k)})$ as $t^{(k)} \to t$ and $t^{(k)}\in{\rm int} (\Delta_m)$.  When $\mu$ and $(u_i)_{i=1}^m$ are fixed in advance, we denote $H = H_{(u_i)_{i=1}^m}^\mu$.
By Lemma \ref{fact:c-subgradient-of-discrete-inf} there is a transport map from $\mu$ to $\nu = 	 \sum_i \alpha_i \ind_{u_i}$ when $\alpha = H(t)$ for $t\in {\rm int} (\Delta_m)$, and moreover the transport map's graph is included in the $c$-subgradient of the function 	
$\min_{1\leq j\leq m} c(x,u_j) -\ln(t_j)$. In particular, the image of $H$ is inside the 
$P = P((u_i)_{i=1}^m,\mu)$, the associated Hall polytope.

 Our first claim regards the continuity of $H$. 
	\begin{prop}\label{prop:H-continuous} Let $X$ and $Y$ be measure spaces,  $c:X\times Y \to (-\infty, \infty]$ measurable, let $(u_i)_{i=1}^m\subset Y$,  and let 
  $\mu \in {\mathcal P}(X)$ be $c$-regular and supported on $\{x\in X: \exists i\in [m]\  c(x,u_i)<\infty\}$.  Then, the function $H = H_{(u_i)_{i=1}^m}^\mu:\Delta_m \to P((u_i)_{i=1}^m, \mu)$ is well defined and continuous on ${\rm int}(\Delta_m)$.%
			\end{prop}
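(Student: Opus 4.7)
The plan is to first verify that $H$ takes values in $\Delta_m$ (in fact in the Hall polytope $P$) on $\mathrm{int}(\Delta_m)$, and then to establish continuity coordinatewise by dominated convergence. In both parts, the $c$-regularity hypothesis is what controls the $\mu$-null sets where several indices tie in the infimum defining $H$.

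For well-definedness at $t\in\mathrm{int}(\Delta_m)$, I would write $f_j^t(x)=c(x,u_j)-\ln(t_j)$. For any $i\neq j$, the set $\{x:f_i^t(x)=f_j^t(x)<\infty\}$ is contained in $\{x:c(x,u_i)-c(x,u_j)=\ln(t_i)-\ln(t_j)\}$, which has $\mu$-measure zero by $c$-regularity since $\ln(t_i)-\ln(t_j)$ is a finite real number. Hence the sets $U_i(t)=\{x:\arg\min_{1\leq j\leq m}f_j^t(x)=i\}$ are pairwise disjoint up to $\mu$-null sets, and their union covers the support of $\mu$ because every $x$ there satisfies $c(x,u_j)<\infty$ for at least one $j$. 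Therefore $\sum_{i=1}^m\mu(U_i(t))=1$, and since $U_i(t)\subset\{x:c(x,u_i)<\infty\}\subset A_I$ whenever $i\in I$, we have $\sum_{i\in I}H(t)_i\leq\mu(A_I)$ for every $I\subset[m]$, so $H(t)\in P$.

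For continuity at such $t$, I would take any sequence $t^{(k)}\to t$ inside $\mathrm{int}(\Delta_m)$ and apply dominated convergence to each coordinate $H(\cdot)_i=\mu(U_i(\cdot))$. Let $N\subset X$ be the $\mu$-null set of points at which two distinct indices attain equal finite values of $f_j^t$. For every $x\notin N$ in the support of $\mu$, the minimum of $\{f_j^t(x)\}_{j=1}^m$ is attained at a unique index $i^\ast(x)$ with a strictly positive gap $\eps(x)>0$ to all other finite $f_j^t(x)$. Since $t_j>0$, we have $\ln(t_j^{(k)})\to\ln(t_j)$ for each $j$, and $f_j^{t^{(k)}}(x)-f_j^t(x)=\ln(t_j)-\ln(t_j^{(k)})$ is independent of $x$; for $k$ large enough (depending only on $\eps(x)$) the gap is preserved, so $i^\ast(x)$ remains the unique argmin at $t^{(k)}$. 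Consequently $\ind_{U_i(t^{(k)})}(x)\to\ind_{U_i(t)}(x)$ for $\mu$-almost every $x$, and dominated convergence (with dominant $1\in L^1(\mu)$) gives $\mu(U_i(t^{(k)}))\to\mu(U_i(t))$, hence $H(t^{(k)})\to H(t)$.

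I do not expect a serious obstacle. The only delicate point is that the locus of non-unique minimizers must be $\mu$-negligible; this is exactly what the $c$-regularity assumption provides, and is the reason it appears in the hypotheses. Note also that this argument genuinely requires $t\in\mathrm{int}(\Delta_m)$: on $\partial\Delta_m$ some $\ln(t_j)=-\infty$, which is why $H$ is only defined as a set-valued limit there and the proposition asserts continuity only on the interior.
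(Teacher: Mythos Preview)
Your proposal is correct and follows essentially the same strategy as the paper: both arguments rest on the fact that the tie-locus has $\mu$-measure zero by $c$-regularity, and that away from ties the argmin is stable under small perturbations of $t$. The only cosmetic difference is that the paper carries out an explicit $\varepsilon$--$\delta$ estimate via the ``near-tie'' sets $U_i^k=\{x\in U_i:\exists j\neq i,\ f_j^t(x)-f_i^t(x)\le 1/k\}$ and bounds $\mu(U_i\triangle V_i)$ directly, whereas you pass through pointwise a.e.\ convergence of the indicators $\ind_{U_i(t^{(k)})}$ and invoke dominated convergence; these are two standard packagings of the same idea.
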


\begin{proof} First note that the function $H$ is well defined as $\mu$ is $c$-regular, and the subsets 
\[U_i := \left\{ x\in X:\ \arg\min_{1\leq j\leq m} c(x,u_j) -\ln(t_j) = i \right\}\]  
form a measurable partition of $X$ (the intersections are of measure $0$, as well as the set where the minimum is $+\infty$), as in Lemma \ref{fact:c-subgradient-of-discrete-inf}.

To show that $H$ is continuous on ${\rm int}(\Delta_m)$, let $t \in {\rm int}(\Delta_m)$,  $\varepsilon >0$ be fixed.  We will show that then there exists $\delta>0$ such that for all $t' \in \Delta_m$ with $\|t'-t\|_2<\delta$, we have $\|H(t')-H(t)\|_2 < \varepsilon$. To see this, note that the $i^{th}$ coordinate of the difference is given by $\mu(U_i) - \mu(V_i)$, where 
	\[V_i := \left\{ x\in X:\ \arg\min_{1\leq j\leq m} c(x,u_j) -\ln(t'_j) = i \right\}.\]
Clearly, this difference is bounded (in absolute value) by $\mu(U_i \triangle V_i)$, where $\triangle$ denotes the symmetric difference of the two sets. To estimate the measure of the symmetric difference, when $t$ and $t'$ are close, we use the following sets, which converge to measure $0$ sets as $k\to \infty$.

Define for $i\in [m], k\in {\mathbb N}$ 
	\[U_i^k = \left\{ x\in U_i: \exists j\neq i, \ c(x,u_j)-\ln(t_j) - c(x,u_i) + \ln(t_i) \le \frac{1}{k} \right\}. \] 
	
Note that $U_i^{k+1} \subset U_i^k $, and as $\mu$ is  finite  $\mu(U_i^k) \searrow  \mu(\lim_k U_i^k)$. Moreover,  $\mu(\lim_k U_i^k) = 0$ since by $c$-regularity of $\mu$ the limit set $\cup_{i\neq j}\{x\in X:\  c(x,u_j)-\ln(t_j) - c(x,u_i) + \ln(t_i) = 0  \}$ has zero measure. In particular, for every $i\in [m]$ there exists some $k_i$ such that for all $k>k_i$, we have $\mu(U_i^{k})<\varepsilon/m$.  Denote $k_0 = \max_i k_i$, and note that for any $k \ge k_0$ we have $\mu(\cup_{j=1}^m U_j^{k})<\varepsilon$ (and in particular for $k = k_0$).

We next claim that there exists $\delta$ such that if $t'$ is such that $|t'_i-t_i|<\delta$ for all $i$, we have 	that			
				 $U_i \triangle V_i \subset \cup_{j=1}^m U_j^{k_0}$, which completes the proof.  Indeed, we will choose $\delta$ such that if $|t-t'|<\delta$ then $t_le^{-1/2k_0}\le t_l'\le t_le^{1/2k_0}$ for every $l$.

  First consider the case $x\in U_i\setminus V_i$, and note that then there exists $1\le l\le m$ such that $x\in V_l$ (since the sets $(V_l)_{l=1}^m$ are a partition of $X$) and hence for all $1\le j\le m$ we have
\[ c(x,u_l)-\ln(t'_l) - c(x,u_j) + \ln(t'_j)\le 0.\]	
By taking $j=i$ and by the choice of $\delta$, 
		\[ 	c(x,u_l)-\ln(t_l ) - c(x,u_i) + \ln(t_i ) \le 	\frac{1}{k_0},\]
which yields that $x\in U_i^{k_0}$. Similarly, in the case where $x\in V_i\setminus U_i$, there exists $1\le l\le m$ such that $x\in U_l$ and since $x\in V_i$ we have that for all $1\le j\le m$ 
\[c(x,u_i)-\ln(t_i')-c(x,u_j)+\ln(t_j') \le 0.
\]
By taking $j=l$ and using the assumption on $\delta$, this yields
\[c(x,u_i)-\ln(t_i )-c(x,u_l)+\ln(t_l ) \le 	\frac{1}{k_0}, 
\]
which implies $x\in U_l^{k_0}$, and in particular, in both cases, $x\in\cup_{j=1}^m U_j^{k_0}$. Since the parameters were chosen so that the measure of this set is at most $\eps$, we conclude that $\mu(U_i \triangle V_i)<\eps$, so long as $|t'-t|<\delta$, which completes the proof.  
			\end{proof}

A main feature of the map $H_{(u_i)_{i=1}^m}^\mu$ is that it respects the product structure on the faces of $\Delta_m$. More precisely, when applied to a point on a face $\partial_I\Delta_m$, the map is usually set-valued. The set which such a point is mapped to, however, has a specified $I^{th}$-coordinate (given by another map of such form, associated with a different measure), and the $I^c$-coordinates of points in the image span a full Hall polytope of another associated measure -- exactly the one given in the face splitting discussed in Proposition \ref{prop:splitting}. This is formally described in the next proposition.
			
\begin{prop}\label{prop:description-of-H}
Under the assumptions of Proposition \ref{prop:H-continuous}, consider some subset $I\subset [m]$ and let $t = t_I \times _I 0_{I^c}$ be a vector  with positive  $I^{th}$-coordinates.
	Let
	\[\mu_I = \frac{1}{\mu(A_I)}\mu\big|_{A_I}, \, \mu_I^c=\frac{1}{\mu(A_I^c)}\mu\big|_{A_I^c}.\]
	Then,
	\[H_{(u_i)_{i=1}^m}^\mu (t_1,\dots,t_m) = \mu(A_I)H_{(u_i)_{i\in I}}^{\mu_I}\left(t_i \right)_{i\in I} \times_I \mu(A_I^c)H_{(u_j)_{j\in I^c}}^{\mu_I^c}(\Delta_{m-|I|}).\]
	In particular, $H_{(u_i)_{i=1}^m}^\mu$ maps the face $\partial_I\Delta_m$ to the face $F_I$ of the Hall polytope  $P((u_i)_{i=1}^m, \mu)$.
			\end{prop}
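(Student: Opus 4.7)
The plan is to analyze the limit set $H^\mu_{(u_i)_{i=1}^m}(t)$ for $t = (t_I, 0_{I^c}) \in \partial_I \Delta_m$ by taking an arbitrary interior sequence $t^{(k)} \to t$ in ${\rm int}(\Delta_m)$, computing $\alpha^{(k)} = H^\mu(t^{(k)})$, and decomposing the measure space $X$ according to $A_I \cup A_I^c$. The crucial observation is that the $\arg\min$ defining $H$ is invariant under uniform rescaling of the vector $t$, so that the $I$-coordinates and $I^c$-coordinates of $\alpha^{(k)}$ can each be rewritten as values of $H^{\mu_I}_{(u_i)_{i \in I}}$ and $H^{\mu_I^c}_{(u_j)_{j \in I^c}}$ applied to suitable normalizations of the restricted vectors $(t_i^{(k)})_{i \in I}$ and $(t_j^{(k)})_{j \in I^c}$.

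First I would handle the set $A_I^c$: for $x \in A_I^c$ we have $c(x,u_i)=+\infty$ for every $i \in I$, so the $\arg\min$ always lies in $I^c$, and the induced $I^c$-partition of $A_I^c$ coincides, after rescaling $(t_j^{(k)})_{j \in I^c}$ to lie in $\Delta_{m-|I|}$, with the partition defining $H^{\mu_I^c}_{(u_j)_{j \in I^c}}$. Next, for $x \in A_I$, since $t_i^{(k)} \to t_i > 0$ for $i \in I$ whereas $t_j^{(k)} \to 0$ for $j \in I^c$, the quantity $c(x,u_j)-\ln(t_j^{(k)})$ tends to $+\infty$ for every $j \in I^c$, while for some $i \in I$ with $c(x,u_i)<\infty$ (such an $i$ exists by the definition of $A_I$) we have $c(x,u_i)-\ln(t_i^{(k)}) \to c(x,u_i)-\ln(t_i) < \infty$. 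Hence for $\mu$-a.e.\ $x \in A_I$ the $\arg\min$ eventually lies in $I$, and by dominated convergence the $\mu$-measure of the exceptional set $\{x \in A_I : \arg\min \in I^c\}$ tends to zero.

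Combining these, for $i \in I$ we obtain $\alpha^{(k)}_i = \mu(A_I) \cdot H^{\mu_I}_{(u_i)_{i \in I}}(s^{(k)})_i + o(1)$, where $s^{(k)}$ is the normalization of $(t_i^{(k)})_{i \in I}$ to $\Delta_{|I|}$. Since $t_I \in {\rm int}(\Delta_{|I|})$ by hypothesis, continuity of $H^{\mu_I}$ at $t_I$ (Proposition \ref{prop:H-continuous}) gives $\alpha^{(k)}_I \to \mu(A_I) \cdot H^{\mu_I}(t_I)$, independently of the choice of sequence $t^{(k)}$. For the $I^c$-coordinates we similarly get $\alpha^{(k)}_{I^c} = \mu(A_I^c) \cdot H^{\mu_I^c}(r^{(k)}) + o(1)$ with $r^{(k)}$ the normalization of $(t_j^{(k)})_{j \in I^c}$ to $\Delta_{m-|I|}$. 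The vector $r^{(k)}$ can have as limit any point of $\Delta_{m-|I|}$ depending on the relative rates at which those coordinates decay, so by the very definition of $H^{\mu_I^c}$ on $\partial \Delta_{m-|I|}$ as the set of limits from the interior, the possible limits of $\alpha^{(k)}_{I^c}$ span exactly $\mu(A_I^c) H^{\mu_I^c}(\Delta_{m-|I|})$. This yields the asserted product equality, and containment in $F_I$ then follows from the face-splitting of Proposition \ref{prop:splitting}.

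The main technical obstacle is the dominated-convergence step on $A_I$: the threshold $k$ after which the $\arg\min$ of $c(x,u_j)-\ln(t_j^{(k)})$ enters $I$ depends on $x$, so some uniformity argument is needed. This is handled by establishing pointwise $\mu$-a.e.\ convergence to zero of the indicator of the exceptional set and then applying the dominated convergence theorem with the constant function $1$ as envelope, in the same spirit as the continuity argument used to prove Proposition \ref{prop:H-continuous}.
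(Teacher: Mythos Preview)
Your proposal is correct and follows essentially the same route as the paper's proof. The paper also establishes the equality by a two-sided inclusion: for $\supseteq$ it constructs the explicit sequence $t^\delta = ((1-\delta)t_I, \delta s_{I^c})$ and estimates the difference $|H(t^\delta)-\alpha|$ via an $\varepsilon$--$\delta$ argument, while for $\subseteq$ it takes an arbitrary interior sequence $t^{(k)}\to t$, normalizes the $I$- and $I^c$-blocks separately, and passes to subsequential limits --- exactly your decomposition into $s^{(k)}$ and $r^{(k)}$. Your use of dominated convergence for the exceptional set $\{x\in A_I:\arg\min\in I^c\}$ is a clean repackaging of the paper's explicit estimate, and your observation that $H^{\mu_I^c}(\Delta_{m-|I|})$ is precisely the closure of the interior image (so that any such point is reachable by a suitable choice of $r^{(k)}$) is the same content as the paper's explicit $t^\delta$ construction.
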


\begin{proof}[Proof of Proposition \ref{prop:description-of-H}]
				
				We   show  a two-way inclusion.\\				
For the direction $\supseteq$ take a point $(\alpha_1,\dots, \alpha_m)$ in the right hand side, which is of the form
\[\mu(A_I)H_{(u_i)_{i\in I}}^{\mu_I}(t_i)_{i\in I} \times_I \mu(A_I^c)H_{(u_j)_{j\notin I}}^{\mu_I^c}(s_j)_{j\notin I}\]
	where $\sum_{i\in I} t_i = 1$ and $\sum_{j \notin I} s_j =1$. For $\delta \in (0,1)$ define   ${t}^\delta \in\Delta_m$ in the following way
	\begin{equation*}
	{t}^\delta_i = \begin{cases}
	(1-\delta) t_i & i\in I\\
	\delta s_i & i\notin I
	\end{cases}
	\end{equation*} 
	Clearly, $t^\delta \to t$  as $\delta \to 0$, thus, by continuity of $H$ (Proposition \ref{prop:H-continuous}), it suffices to show that
	\[(\alpha_1,\dots, \alpha_m) \in \lim_{\delta \to 0} H_{(u_i)_{i=1}^m}^{\mu}({t}^\delta_i)_{i=1}^m.\]
    We will show that for every $\varepsilon >0$ there exists some $\delta_0$ such that for every $\delta< \delta_0$  we have
				\[||(\alpha_1,\dots, \alpha_m) -  H_{(u_i)_{i=1}^m}^{\mu}({t}^\delta)|| \leq \varepsilon.\]
				Denote
				$(\beta_1,\dots, \beta_m) = H_{(u_i)_{i=1}^m}^{\mu}({t}^\delta)$. 
				Let us reinterpret $\beta_i$,
				\begin{eqnarray*} 
				\beta_i &=& \mu (  \{ x \in X :  \arg\min_{1\leq k \leq m}   c(x,u_k) - \ln({t}^\delta_k) = i   \}  )\\
				&=& \mu (  \{ x \in A_I^c : \arg\min_{1\leq k \leq m}   c(x,u_k) -\ln({t}^\delta_k)= i  \}  )\\&& +  \mu (  \{ x\in A_I:\ \arg\min_{1\leq k \leq m}   c(x,u_k) -\ln({t}^\delta_k) = i  \}  ). 
				\end{eqnarray*}
				On $A_I^c$ the minimum is attained for $k\notin I$,  hence
				\begin{align*} 
				\beta_i& = \mu(A_I^c)\mu_I^c ( \{ x\in X :\ \arg\min_{1\leq k \leq m}   c(x,u_k) -\ln(\delta s_k ) = i  \} ) \\
				&\quad +  \mu (  \{ x\in A_I:\ \arg\min_{1\leq k \leq m}   c(x,u_k)-\ln({t}^\delta_k) = i  \} ) \\
				&= \mu(A_I^c)\mu_I^c (  \{ x\in X :\ \arg\min_{1\leq k \leq m}   c(x,u_k) -\ln( s_k)= i  \}  ) \\
				&\quad +  \mu (  \{ x\in A_I:\ \arg\min_{1\leq k \leq m}  c(x,u_k)-\ln({t}^\delta_k ) = i  \}  ). 
				\end{align*}

				Observe that the first summand is by definition equal to
		\begin{equation}\label{eq:key-lemma-observation}
				\mu(A_I^c)\mu_I^c (  \{ x\in X :\ \arg\min_{1\leq k \leq m}   c(x,u_k)-\ln(s_k ) = i  \}  ) = \begin{cases}
				0 & i\in I\\
				\alpha_i & i \notin I
				\end{cases}.
				\end{equation}

	We first deal with the case $i\notin I$, in which \eqref{eq:key-lemma-observation} gives that 
				\[\beta_i = \alpha_i + \mu (  \{ x\in A_I:\ \arg\min_{1\leq k \leq m}   c(x,u_k) -\ln({t}^\delta_k) = i  \}  ).\]			
	 For $x\in A_I$, $\arg\min_{1\leq k \leq m}   c(x,u_k) -\ln({t}^\delta_k) = i$ means, in particular, that  
		\[c(u_j,x)-\ln((1-\delta)t_j) > c(u_i,x)-\ln(\delta s_i)\]
		for all $j\in I$. Since $s_i$ and $t_j$ are fixed, we can clearly find $\delta_0>0$ such that for any $\delta<\delta_0$ the measure  of such $x$'s is arbitrarily small. Thus, we choose $\delta_0$ (depending on $s$ and $t$) such that  $\alpha_i \le \beta_i \le \alpha_i + \varepsilon/m$.

For the case $i \in I$,
	            \begin{align*}
				\beta_i &= \mu (  \{ x\in A_I:\ \arg\min_{1\leq k \leq m}   c(x,u_k) -\ln({t}^\delta_k) = i \} ) \\
				&\leq \mu (  \{ x\in A_I:\ \arg\min_{k \in I}  c(x,u_k) -\ln({t}^\delta_k) = i  \}    )\\
				&= \mu ( \{ x\in A_I:\ \arg\min_{k \in I}  c(x,u_k) -\ln(1-\delta)-\ln(t_k )= i  \}  )\\
				&=\mu(A_I) \mu_I (  \{ x\in A_I:\ \arg\min_{k \in I} c(x,u_k) -\ln(t_k )= i  \}  ) = \alpha_i.
				\end{align*} 
				Thus we have $\beta_i \le \alpha_i$.  Since $\sum_{i=1}^m \beta_i = 1 =\sum_{i=1}^m \alpha_i$, and   $\alpha_i \le \beta_i \le \alpha_i + \varepsilon/m$ when $i\notin I$, we see that 
				\[\sum_{i\in I} \alpha _i - \varepsilon \le  1 - \sum_{j\notin I} (\alpha_j + \varepsilon/m) \le  \sum_{i\in I} \beta_i \le \sum_{i\in I} \alpha _i\] 
				and we conclude $\beta_i > \alpha_i -\varepsilon$ for  $i\in I$.
								
				So far we have shown that for any $1\leq i \leq m$, we have
				\[\alpha_i - \varepsilon \leq \beta_i \leq \alpha_i + \varepsilon,\]
				which completes the proof of the first inclusion.
				
				We proceed to show the second inclusion $\subseteq$. Let  $\alpha\in H_{(u_i)}^\mu (t)$  for $t=t_I \times_I \{0_{I^c}\}$. By the definition of $H$ on the boundary of the simplex,  there exists a sequence
				\[(t_1^{(k)},\dots, t_m^{(k)})=t^{(k)} \to t =(t_1, \dots, t_m)\]
				with $t^{(k)} \in {\rm int} (\Delta_m)$, and
				\[H^\mu_{(u_i)_{i=1}^m}(t^{(k)}) \to \alpha.\] 
		In particular
				$\sum_{i\in I} t_i^{(k)}\to 1$ and $\sum_{j\notin I} t_j^{(k)}\to 0$.
				Note that for $x\in A_I$, as $k \to \infty$, the minimum in the definition will be attained (from some $k_0$ onwards) on an index  $i\in I$. Therefore 
				\[\sum_{j\in I} \alpha_j \leftarrow \sum_{j\in I} (H_{(u_i)_{i=1}^m}^\mu(t_i^{(k)}))_j = \mu ( x\in X:  \arg\min_{1\leq i \leq m}  c(x,u_i) -\ln(t_i^{(k)}) \in I ) \rightarrow \mu(A_I)\] (limits with respect to $k\to \infty$), which implies $\sum_{i\in I} \alpha_i = \mu (A_I)$ and thus $ \sum_{j \notin I} \alpha_j = \mu(A_I^c)$.

				Set ${t'}_i^{(k)} = \frac{t_i^{(k)}}{\sum_{j\in I} t_j^{(k)}}$ for $i\in I$. It is well defined (for large enough $k$, as $\sum_{j\in I} t_j =1$), and its limit is clearly ${t'}_i=\frac{t_i}{\sum_{j\in I} t_j}$. Hence, 
				\begin{align*}
				H^{\mu_I}_{(u_i)_{i\in I}}(t'_i)
				&= ( \mu_I( \{ x\in X:\ \arg\inf_{k \in I}   c(x,u_k)-\ln(t'_k) = i \} ))_{i\in I}\\
				&= ( \frac{1}{\mu(A_I)}\mu ( \{ x\in A_I:\ \arg\min_{k \in I} c(x,u_k) -\ln( t'_k )= i \} ) )_{i\in I}\\
				&= ( \frac{1}{\mu(A_I)}\mu (  \{ x\in X:\ \arg\min_{k \in I}   c(x,u_k) -\ln(t_k) = i  \}  )  )_{i\in I} 
				 =\frac{1}{\mu(A_I)}(\alpha_i)_{i \in I}.
				\end{align*}
				In the second to last step we used again the fact that the minimum can be attained at $i \in I$ only if $x \in A_I$.
				Thus,
				\[(\alpha_i)_{i\in I} \in \mu(A_I)H^{\mu_I}_{(u_i)_{i\in I}}\left(\frac{t_i}{\sum_{i\in I}t_i}\right) = \mu(A_I)H^{\mu_I}_{(u_i)_{i\in I}}\left((t_i\right)_{i\in I}).\]

				Setting ${t''}_i^{(k)} = \frac{t_i^{(k)}}{\sum_{j\notin I} {t}_j^{(k)}}$  for $i \notin I$, the sequence $({t''}_i^{(k)})_{k=1}^\infty$ has a converging subsequence in  $\Delta_{m-|I|}$, denote this subsequence by $({t''}_i^{(k_{l})})_{l=1}^\infty$, and its limit $({t''}_i)_{i\notin I}$. Once again, by the same argument as above, the image of this point under the map $H$ corresponding to $\mu_I^c$ is exactly
				\[H_{(u_i)_{i\notin I}}^{\mu_I^c}(t''_j) = \frac{1}{\mu(A_I^c)}(\alpha_j)_{j \notin I}.\] 
			\end{proof}

	\subsection{Transporting a non-degenerate measure to a discrete measure }
	We proceed to the proof of the following theorem, which is a version of Theorem \ref{thm:discrete-transport}, with an extra non-degeneracy assumption of $\mu$ (recall Definition \ref{def:non-degenerate-hall}).

\begin{thm}\label{thm:onto-under-goodness}
	Let $X$ and $Y$ be measure spaces,  $c:X\times Y \to (-\infty, \infty]$ measurable, fix $(u_i)_{i=1}^m\subset Y$ and let $\mu \in {\mathcal P}(X)$ be $c$-regular and supported on $\{x\in X: \exists i\in [m]\  c(x,u_i)<\infty\}$.  
	Assume, in addition, that $\mu$ is non-degenerate with respect to $(u_i)_{i=1}^m$. 
	Then, the mapping $H_{(u_i)_{i=1}^m}^\mu: {\rm int}(\Delta_m) \to  %
	(P ((u_i)_{i=1}^m, \mu))$ covers the set ${\rm int}
	(P ((u_i)_{i=1}^m, \mu))$, that is, for any $\alpha \in {\rm int} (P((u_i)_{i=1}^m, \mu)))$ there exists some $t\in{\rm int} (\Delta_m)$ such that 
	$H_{(u_i)_{i=1}^m}^\mu(t) = \alpha$. 
\end{thm}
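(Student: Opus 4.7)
The plan is to argue by contradiction, building a continuous self-map of $\Delta_m$ that lands in $\partial\Delta_m$ and preserves the face structure, and then invoking the no-retraction theorem. Assume some $\alpha^*\in\mathrm{int}(P)$ is not in $H(\mathrm{int}(\Delta_m))$. Let $p_{\alpha^*}\colon P\setminus\{\alpha^*\}\to\partial P$ denote the radial projection from $\alpha^*$, which is continuous on its domain and acts as the identity on $\partial P$. Define $G := R\circ p_{\alpha^*}\circ H$ on $\mathrm{int}(\Delta_m)$; the goal is to extend $G$ continuously to $\Delta_m$ and then derive a topological contradiction.

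The first task, which I expect to be the main obstacle, is extending $G$ continuously and single-valuedly to all of $\Delta_m$. The difficulty is that $H$ is typically set-valued on $\partial\Delta_m$: by Proposition~\ref{prop:description-of-H}, if $t$ lies in the relative interior of $\partial_I\Delta_m$, then $H(t)\subset F_I$, and the $I$-coordinates of every element of $H(t)$ agree while the $I^c$-coordinates may vary over an entire factor $\mu(A_I^c)\hat P_I$. Two features save us: $p_{\alpha^*}$ is the identity on $F_I\subset\partial P$, and $R$ restricted to $F_I$ depends only on the $I$-coordinates, by~\eqref{eq:R-mapping-requirement}. Thus the ambiguity is annihilated by $R$, and $G$ extends single-valuedly to $\partial\Delta_m$. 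Continuity follows from Proposition~\ref{prop:H-continuous} on the interior together with the fact that any accumulation point of $H(t^{(k)})$ along a sequence $t^{(k)}\to t\in\partial_I\Delta_m$ lies in $F_I$ with the correct $I$-coordinates, so that $R\circ p_{\alpha^*}$ applied to it is forced to equal the prescribed boundary value.

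Next, directly from the constructions, $G$ sends $\partial_I\Delta_m$ into $\partial_I\Delta_m$ for every $I\subset[m]$: $H(\partial_I\Delta_m)\subset F_I$ by Proposition~\ref{prop:description-of-H}, $p_{\alpha^*}$ fixes $F_I$ pointwise, and $R(F_I)\subset\partial_I\Delta_m$ by Proposition~\ref{prop:mapping-P-to-simplex}. In particular $G$ fixes each vertex of $\Delta_m$. Convexity of $\partial_I\Delta_m$ together with this invariance means that the linear homotopy $(t,s)\mapsto(1-s)G(t)+s\,t$ stays within each face and is therefore a homotopy in $\partial\Delta_m$ from $G|_{\partial\Delta_m}$ to $\mathrm{id}_{\partial\Delta_m}$.

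Finally, since $\Delta_m$ is contractible, composing $G$ with a contraction of $\Delta_m$ to a point exhibits $G\colon\Delta_m\to\partial\Delta_m$ as nullhomotopic, so its restriction $G|_{\partial\Delta_m}$ is nullhomotopic within $\partial\Delta_m$. Combined with the homotopy from the previous paragraph, this forces the identity on $\partial\Delta_m\cong S^{m-2}$ to be nullhomotopic, which is impossible for $m\geq 2$; the case $m=1$ being trivial. This contradicts our assumption, and hence every $\alpha\in\mathrm{int}(P)$ lies in $H(\mathrm{int}(\Delta_m))$.
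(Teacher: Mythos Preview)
Your proof is correct and follows the same strategy as the paper: assume an interior point $\alpha^*$ is missed, form $G=R\circ p_{\alpha^*}\circ H$, check that the face structure makes $G$ single-valued and continuous on $\partial\Delta_m$ (since $H(t)\subset F_I$ has determined $I$-coordinates and $R|_{F_I}$ ignores the rest), and derive a topological contradiction from the face-preserving property. The only cosmetic difference is the endgame: the paper composes $G$ with a cyclic permutation of the vertices to obtain a fixed-point-free self-map of $\Delta_m$, contradicting Brouwer, whereas you argue directly that $G|_{\partial\Delta_m}$ is simultaneously homotopic to the identity (via the straight-line homotopy within each face) and nullhomotopic (since it extends over the contractible $\Delta_m$).
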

			
			\begin{proof}
	Denote $H = H_{(u_i)_{i=1}^m}^\mu$ and $P = P ((u_i)_{i=1}^m, \mu)  $. By the non-degeneracy assumption, $P$ is full dimensional, and in particular has non-empty interior.
	If the image of $H $ did not 
				cover the interior of $P$, there would be some $\alpha \in {\rm int} P$ such that $H (t)\neq \alpha$ for all $t \in {\rm int} (\Delta_m)$. We use $\alpha$ to define the radial projection $p$ of $P\setminus\{\alpha\}$ to its boundary. It follows that $p \circ H $ is well defined and continuous. We then use the function $R$ as given in Proposition \ref{prop:mapping-P-to-simplex} to map the boundary of $P$ to the boundary of the simplex.  
				Since $H(\partial_I \Delta_m) \subset F_I$	we see that $R\circ p \circ H$ is a mapping from the simplex to its boundary which maps the $I^{th}$ facet to itself.

				Note that this composition map is a well defined function, i.e. a point-valued map: It is clearly point-valued on ${\rm{int}} (\Delta_m)$. Let $t \in \pa \Delta_m$, and take the minimal $I$ (with respect to inclusion) such that $t \in \pa_I \Delta_m$. Then, the coordinates $t_I$ are all non-zero, and by  Proposition \ref{prop:description-of-H} points in the set $H(t)$ differ only on their $I^c$ coordinates. Again by Proposition \ref{prop:description-of-H}, $H(t) \in F_I \subset \pa P$, so $p (H(t))=H(t)$. Further, since the map $R$ depends only on the $I$ coordinates of $H(t)$ (as $H(t)\in F_I$), we conclude that the set $H(t)$ is mapped to a single point, and thus $R\circ p \circ H$ is point-valued.
				
				Next we claim that the composition $R\circ p \circ H$ is a continuous function on $\Delta_m$. For points in the interior of $\pa \Delta_m$  this follows from the fact that all three maps are continuous (see Propositions \ref{prop:H-continuous} and \ref{prop:mapping-P-to-simplex}). We proceed to explain why the composition is continuous on the boundary.
				Let  $t = t_I \times_I 0_{I^{c}}$ be some boundary point, with $t_i>0$ for $i\in I$ (so,  $t\in {\rm relint} (\partial_I \Delta_m)$).
				Consider a sequence $t^{(k)} \to t$ with $(R\circ p \circ H)(t^{(k)})$ converging to some vector $s$ on the boundary of the simplex.   We need to show that $s = (R\circ p \circ H)(t)$. 
				By the definition of $H$ on boundary points, and the continuity of $p$ and $R$, we may without loss of generality assume $t^{(k)} \in {\rm int} (\Delta_m)$.  
				Indeed,  for any $t'\in \partial  \Delta_m$, $y\in H(t')$ and any $\eps>0$, there is some $t'_\eps \in {\rm int} (\Delta_m)$ with $\left|y - H(t'_\eps)\right|<\eps$, so given {\em any} sequence  $t^{(k)} \to t$  with $(R\circ p \circ H)(t^{(k)})$ converging to $s$ we can construct a sequence in the interior, converging to $t$, whose image under $R\circ p \circ H$  converges to the same $s$.  By definition of $H$, all accumulation points of the sequence $H(t^{(k)})$ belong to $H(t)$. By continuity of $R\circ p$, we conclude that
				all accumulation points of $R\circ p \circ H(t^{(k)})$
				(which we have assumed converge to the point $s$) belong to $(R\circ p)(H(t))$.  However, as we have already seen, $R\circ p \circ H(t)$ is a point, %
				and we get that  $s= R\circ p \circ H(t)$.

				However, there does not exist a continuous mapping from the simplex to its boundary which preserves the facets. Indeed, this can be shown, for example, using Brouwer's fixed point theorem -- as such a map  could then be composed with a permutation, arriving at a continuous mapping from the simplex to itself with no fixed point. Hence, $H$ covers the interior of $P$, and for every $\alpha\in {\rm int} (P)$ there is some preimage $t$. Moreover, this $t$ satisfies $t\in {\rm int} (\Delta_m)$, otherwise, if $t_i =0$ for some set of indices $i\in I$, then by Proposition \ref{prop:description-of-H}, $H(t)\in F_{I^c}$, which does not contain $\alpha$ (as $ F_{I^c}$ is not in the interior of $P$).
			\end{proof}			
		
			\subsection{Removing the non-degeneracy condition}\label{subsec:removing-non-degenerate}

		The only difference between Theorem \ref{thm:discrete-transport} and Theorem \ref{thm:onto-under-goodness}, apart from notation, is that in the latter we assume not only that  \[\{x\in X: c(x,u_i) <\infty \} \cap \{x\in X: c(x,u_j) <\infty \}\] 
		contains an open set for any $i\neq j$, but that $\mu$ is non-degenerate with respect to the vectors $(u_i)_{i=1}^m$, namely that 
			\[ \mu \left(\{x: c(x,u_i)<\infty\} \cap \{x: c(x,u_j)<\infty\}\right)>0.\]

			To remove this condition, we will use a straightforward  perturbation argument, similar to constructions used  for example, by McCann \cite{mccannthesis},   adding in this case uniform measures on small disks, and taking limits. More formally, make use of the following technical lemma.

\begin{lem}\label{fact:Pk-converges-to-P}
	Let $P=P((u_i)_{i=1}^m, \mu)$ be the Hall polytope associated with $(u_i)_{i=1}^m$ and the measure $\mu$ supported on $\{x\in X: \exists i\in [m]\  c(x,u_i)<\infty\}$, and assume
	 $P$ is full dimensional.
	Further assume that
	  for $1\le i<j\le m$ the intersection
				\[\{x\in X: c(x,u_i) <\infty \} \cap \{x\in X: c(x,u_j) <\infty \} \] 
				contains a disk, for any $i\neq j$, and let $\eta_{i,j}$ denote a uniform measure on this disk, with the constants chosen so that $\sum_{i<j\in [m]} \eta_{i,j}(X) = 1$.
	For any $k\in {\mathbb N}$ let 		 
			 $$\mu_k = \frac{1}{k} \sum_{i<j\in [m]} \eta_{i,j}  + (1-\frac{1}{k}) \mu,$$ and  $P_k=P((u_i)_{i=1}^m, \mu_k)$ the associated Hall polytope. Then,
				\begin{enumerate}[(i)]
					\item $H^{\mu_k}_{(u_i)_{i=1}^m} \to_{k\to \infty} H^{\mu}_{(u_i)_{i=1}^m}$ uniformly on $\Delta_m$, and
					\item $P_k \to P$ as $k\to \infty$ in the Hausdorff metric.
				\end{enumerate}
			\end{lem}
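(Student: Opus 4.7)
My plan is to deduce both statements from a single total variation estimate. Writing $\mu_k - \mu = \tfrac{1}{k}(\eta - \mu)$ with $\eta := \sum_{i<j}\eta_{i,j}$ a probability measure on $X$, I obtain the uniform bound
\[
|\mu_k(U) - \mu(U)| \le \tfrac{1}{k}
\]
for every measurable $U \subset X$. This single inequality will drive both parts.

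For $(i)$, on $\mathrm{int}(\Delta_m)$ both maps are point-valued, provided $\mu_k$ is $c$-regular; this follows from the $c$-regularity of $\mu$ together with the observation that the uniform disk measures $\eta_{i,j}$ are absolutely continuous, so the lower-dimensional level sets $\{c(\cdot,u_p)-c(\cdot,u_q)=t\}$ appearing in the definition of $c$-regularity are $\eta_{i,j}$-null. Fixing $t \in \mathrm{int}(\Delta_m)$ and setting $U_i(t) := \{x : \arg\min_j (c(x,u_j)-\ln t_j) = i\}$, which depends only on $t$ and not on the reference measure, one has $H^\mu(t)_i = \mu(U_i(t))$ and $H^{\mu_k}(t)_i = \mu_k(U_i(t))$, so the total variation bound yields
\[
\|H^{\mu_k}(t) - H^\mu(t)\|_\infty \le \tfrac{1}{k},
\]
uniformly in $t \in \mathrm{int}(\Delta_m)$. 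For boundary $t \in \partial\Delta_m$, both $H^\mu(t)$ and $H^{\mu_k}(t)$ are the sets of accumulation points of the interior values along sequences $t^{(\ell)} \to t$ (per the definition in Section \ref{subsec:H-map}); the same estimate together with a diagonal extraction argument upgrades the interior bound to uniform Hausdorff convergence of the set values on all of $\Delta_m$.

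For $(ii)$, each of the polytopes $P$, $P_k$ is determined by the finitely many inequalities $\sum_{i\in I}\alpha_i \le \beta_I$ (respectively $\sum_{i\in I}\alpha_i \le \beta^{(k)}_I$) with $\beta_I := \mu(A_I)$, $\beta^{(k)}_I := \mu_k(A_I)$. The total variation bound gives $|\beta^{(k)}_I - \beta_I| \le 1/k$ uniformly over the $2^m$ subsets $I$. Since $P$ is assumed full-dimensional, I may pick $\alpha^* \in \mathrm{int}(P)$ so that all the defining strict inequalities hold with a uniform slack $\delta > 0$; for $1/k < \delta$ the point $\alpha^*$ lies in $\mathrm{int}(P_k)$ as well. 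Parametrizing $\partial P$ and $\partial P_k$ by their radial functions from $\alpha^*$ and applying a standard perturbation estimate for finite intersections of half-spaces, the uniform convergence $\beta^{(k)}_I \to \beta_I$ translates into uniform convergence of these radial functions, which is equivalent to $P_k \to P$ in the Hausdorff metric.

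The main technical obstacle is a bookkeeping one, lying in part $(i)$: transferring the pointwise total-variation estimate on $\mathrm{int}(\Delta_m)$ to uniform Hausdorff convergence of the set-valued extensions on $\partial\Delta_m$ requires careful extraction of converging subsequences through the very definition of $H$ on the boundary. Part $(ii)$ is essentially routine once full-dimensionality of $P$ provides an interior point with uniform slack, and the $c$-regularity of $\mu_k$ in part $(i)$ is a mild inheritance from the regularity of $\mu$ under the continuity assumptions on $c$.
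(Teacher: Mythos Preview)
Your argument for (i) is essentially the paper's: both rest on the single total-variation estimate $|\mu_k(U)-\mu(U)|\le 1/k$ applied to the sets $U_i$ (the paper records the looser bound $2/k$). You are more careful than the paper about the boundary of $\Delta_m$; the paper only treats interior $t$. One caveat: your claim that $\mu_k$ inherits $c$-regularity because the level sets $\{c(\cdot,u_p)-c(\cdot,u_q)=t\}$ are ``lower-dimensional'' is not supported by the hypotheses here or in Theorem~\ref{thm:discrete-transport}, where $c$ is merely measurable. This can be sidestepped by fixing a tie-breaking convention in the definition of $U_i$, and the paper in fact glosses over the same point.

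For (ii) your route is genuinely different and more elementary. The paper establishes $P_k\subset P+\varepsilon B_2^m$ by invoking Theorem~\ref{thm:onto-under-goodness} (surjectivity of $H_k$, which rests on Brouwer and on $c$-regularity of $\mu_k$): any $\alpha\in\mathrm{int}(P_k)$ is written as $H_k(t)$, and then part (i) gives $\|\alpha-H(t)\|\le 2\sqrt m/k$ with $H(t)\in P$; the reverse inclusion comes from an explicit convex combination $\alpha^{(k)}=(1-\tfrac1k)\alpha+\tfrac1k(\dots)\in P_k$. You instead observe that $P$ and $P_k$ are cut out in $\Delta_m$ by the \emph{same} finite family of half-spaces, with offsets $\mu(A_I)$ and $\mu_k(A_I)$ differing by at most $1/k$, and then use the common interior point $\alpha^*$ to convert this into Hausdorff convergence via radial functions. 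This decouples (ii) from (i), avoids any appeal to the surjectivity theorem or to $c$-regularity of $\mu_k$, and is the more robust argument; the paper's approach, on the other hand, has the virtue of exhibiting the link between $H_k$ and $P_k$ directly.
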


			 \begin{proof}

				Note first that each $\mu_k$ is non-degenerate, so by Proposition \ref{prop:Pfulldim} $P_k$ is full dimensional. Furthermore, we may apply				
				Theorem \ref{thm:onto-under-goodness}, and get that the mapping $H_k: = H^{\mu_k}_{(u_i)_{i=1}^m}: {\rm int}(\Delta_m) \to  P^{(k)} $ covers the set ${\rm int}(P^{(k)})$. Denote $H = H^{\mu}_{(u_i)_{i=1}^m}$.
				
				For (i), let $t=(t_1,\dots,t_m) \in {\rm int}(\Delta^m)$. 
				It will be convenient to recall the notation 
				$U_i = \left\{ x\in X:\ \arg\min_{1\leq j \leq m}   c(x,u_j)-\ln(t_j)  = i \right\}$. The $i^{th}$ component of the difference vector satisfies
				\begin{eqnarray*}
				\left|  \left(H_k(t) - H(t)\right)_i\right|
			 = \left| \mu_k\left(U_i\right)  - \mu\left( U_i \right)\right|
				= | \frac{1}{k} \sum_{i<j } \eta_{i,j}\left( U_i\right) 
		   - \frac{1}{k}\mu\left( U_i\right) | \le \frac{2}{k}.
				\end{eqnarray*}

				For (ii), let $\varepsilon >0$, we will show that for all $k>k_0=k(\varepsilon)$, $P_k \subset P +\varepsilon B_2^m$ and $P\subset P_k +\varepsilon B_2^m$. For the first inclusion, let $\alpha \in {\rm int} (P_k)$, then since $P_k$ is a Hall polytope of a non-degenerate measure, we apply Theorem \ref{thm:onto-under-goodness} and get a point $t\in \Delta_m$ for which $H_k(t)=\alpha$. As   $H(t)\in P$, the previous assertion (i) gives $\|\alpha - H(t)\|_2 \le \frac{2\sqrt{m}}{k}$, so ${\rm int}(P_k) \subset P + \frac{2\sqrt{m}}{k} B_2^m$ and therefore $P_k$ itself is also included in the same extension of $P$. For the second inclusion, let $\alpha \in P$ and let $\alpha^{(k)}$ be given by $\alpha_i^{(k)}:=\left(1-\frac{1}{k}\right)\alpha_i + \frac{1}{k}\sum_{\{j: i<j\}}\eta_{i,j}(X)$. We claim  $\alpha^{(k)}\in P_k$; we check that it satisfies all of the necessary inequalities. Clearly $\sum \alpha_i^{(k)}=1$, and as the support of $\eta_{i,j}$ is a subset of $A_{\{i\}}\cap A_{\{j\}}\subset A_I$ for all $i\in I$ we have
				\begin{align*}
				\sum_{i\in I} \alpha_i^{(k)}&=\left(1-\frac{1}{k}\right)\sum_{i\in I} \alpha_i +\frac{1}{k}\sum_{i\in I}\sum_{\{j:\, i<j\}}\eta_{i,j}(X) \le \left(1-\frac{1}{k}\right)\mu(A_I)+  \frac{1}{k}\sum_{i< j } \eta_{i,j}(A_I)=\mu_k(A_I).
				\end{align*}
				We compute 
				\[\|\alpha -\alpha^{(k)}\|_2= \left(\sum_{i=1}^m \frac{1}{k^2}(\alpha_i -\sum_{i\in I}\sum_{\{j: \,i<j\}}\eta_{i,j}(X))^2\right)^{1/2}\le \frac{2\sqrt{m}}{k}.\]
				Taking $k_0 = \frac{2\sqrt{m}}{\varepsilon}$ we see that both inclusions hold.
			\end{proof}

	We are now set up to prove the existence of a transport map  between strongly $c$-compatible measures, one of which is discrete and the other $c$-regular.

	\begin{proof}[Proof of Theorem \ref{thm:discrete-transport}]
		
			Let $\mu$ be a $c$-regular measure on $X$ and $\nu=\sum_{i=1}^m \alpha_i \ind_{u_i} $ a discrete measure on $Y$ which satisfy the assumptions of the theorem, and denote by $P=P(\mu,(u_i)_{i=1}^m)$ the  associated Hall polytope. 
			
		The condition of strong $c$-compatibility means precisely that for $I\neq \emptyset, [m]$ we have 
	$\sum \alpha_i < \mu(A_I)$, 
	or, in other words, that $\alpha = (\alpha_i)_{i=1}^m \in {\rm int} (P)$. In particular $P$ is non-empty and in fact full dimensional.  
			The conditions of Lemma \ref{fact:Pk-converges-to-P} are satisfied so we may use it to define $P_k$  and $H_k$ and find a  sequence of points $\alpha^{(k)}\in {\rm int}(P_k)$ such that $\alpha^{(k)}\to \alpha$ as $k\to \infty$. Take $t^{(k)}\in {\rm int}(\Delta_m)$ such that $H_k(t^{(k)})=\alpha^{(k)}$.  By the compactness of $\Delta_m$, there exists a converging subsequence of $t^{(k)}$, to some $t\in\Delta_m$, and we denote $t^{(k)} \to t$, abusing notation slightly. We claim that $t\in {\rm int}(\Delta_m)$ and that $H (t)=\alpha$. 
			Indeed,
			\begin{align*}
			\left|H (t) -\alpha \right| &\le \left|H (t) - H (t^{(k)}) \right|+  \left|H (t^{(k)}) - H_k(t^{(k)}) \right| + \left|H_k(t^{(k)})-\alpha^{(k)}\right|+ \left|\alpha^{(k)}-\alpha\right|.  
			\end{align*}
			Each of the terms tends to $0$ as $k\to \infty$: the leftmost by continuity of $H$ (Proposition \ref{prop:H-continuous}), the second by  uniform convergence of $H_k$ to $H$, the third term vanishes for every $k$ by choice of $t^{(k)}$, and the rightmost by choice of the sequence $\alpha^{(k)}$. Therefore $H(t) = \alpha$. Since $\alpha \in {\rm int} (P)$ we may use 
		  Proposition \ref{prop:description-of-H} to conclude that $t\in {\rm int} (\Delta_m)$. 
		  
		  We have thus established that $H$ is onto the interior of $P$. Recalling the construction in Lemma \ref{fact:c-subgradient-of-discrete-inf}, we have shown that the function 		
	 \[\varphi(x) = \min_{1\le i\le m} ( c(x,u_i)-\ln(t_i))\]
		satisfies that its $c$-subgradient supports a transport map from $\mu$ to $\nu$. 
		The function $\varphi$ is therefore our desired potential. Indeed, the map $T:X\to Y$  which maps the set $\{x:\ \min_{1\le j\le m} ( c(x,u_j)-\ln(t_j)) = i\}$ to $u_i$ for all $i \in [m]$, is a transport map (we define $T$ on the boundary of these sets arbitrarily, as it is $\mu$-negligible) and $(x,u_i) \in \partial^c\varphi$ for $x\in U_i$ by the first (and easy) part of Lemma \ref{lem:subgrad-of-basic-functions} from the appendix.
	\end{proof}

	\section{For the Polar cost: Maps versus Plans}\label{sec:plansvsmaps}
	
	Throughout the paper, we were careful to discuss transport plans, and not just maps. Indeed, even in the simplest cases of discrete measures, there is no reason for a transport {\em map} to exist, as it may require ``atom splitting'', a dangerous endeavor. Nevertheless, in the classical case, for example, when a transport plan from some absolutely continuous measure $\mu$ to a measure $\nu$ is concentrated on the usual subgradient of a convex function, $\varphi\in \cvx(\RR^n)$, it is easy to see that in fact one obtains a map, not just a plan. Indeed, a convex function has a unique subgradient almost everywhere. 
	
	For a general cost $c$ this is no longer the case, but for our main motivating example, the polar cost $p(x,y)=-\ln(\sp{x,y}-1)$, a similar argument works. Recall that for this cost the $p$-class is given by $-\ln(\v)$ where $\v\in \cvx _0(\R^n)$ is a geometric convex function, that is, a lower semi-continuous ~non-negative convex function with $\v(0)=0$. The $p$-subgradient of the function $-\ln(\varphi)$ coincides with the polar subgradient $\partial^\circ$, introduced in \cite{ArtsteinRubinstein}, of the function $\varphi\in \cvx_0(\R^n)$, and we have that
\begin{align*}
\partial^p(-\ln(\varphi))= \partial^\circ\varphi= \{(x,y): \varphi(x) \A \varphi (y) = \sp{x,y}-1>0\},
\end{align*}
where $\A\varphi(y)= \sup_{\{x:\, \sp{x,y}>1\}} \frac{\sp{x,y}-1}{\varphi(x)}$ is the polarity transform defined in \cite{ArtsteinMilmanHidden}. More details are provided in Appendix \ref{appendix:c-subg} together with the proof of the following lemma.

\begin{manuallemma}{\ref{lemma: polargrad}}
	Let $\varphi \in \cvx_0 (\RR^n)$ and let $x$ satisfy $\varphi(x) \in (0,\infty)$.  Then \begin{enumerate}[(i)]
		\item for any $z\in \partial \varphi (x)$ such that  $\sp{x,z} \neq \varphi(x)$, we have that 
		$y=\frac{z}{ \iprod{x}{z}  - \varphi(x)} \in \partial^\circ \varphi(x)$,
		\item for any $y \in \partial^\circ \varphi (x)$ there exists some $z\in \partial \varphi (x)$ such that  $\iprod{x}{z} \neq \varphi(x)$ and such that $y=\frac{z}{ \iprod{x}{z}  - \varphi(x)}$. 
	\end{enumerate}
\end{manuallemma}

	When $\varphi(x) = 0$ or $\varphi(x) = \infty$, then by definition, $\partial^\circ \varphi(x) = \emptyset$. When $\varphi(x) \in (0,\infty)$, the lemma implies that at a differentiability point of $\varphi$, the set $\partial^\circ\varphi(x)$ is either a singleton or is empty, which may happen only if the function $\varphi$ is linear on $[0,x]$. Our main theorem thus implies the following.

	\begin{manualtheorem}{\ref{thm:transpolar}}  
 	Let $X = Y = \RR^n$ and let $\mu,\, \nu\in {\mathcal P}(\RR^n)$ be probability measures with finite second moment, which are strongly $p$-compatible with respect to the polar cost, that is
 	\[\mu(K) + \nu (K^\circ) < 1 \]
 	for any convex set $K$ with $\mu(K)\neq 0,1$.
 	
 	Assume further that $\mu$ is absolutely continuous and that there exists some finite cost plan mapping $\mu$ to $\nu$. 
 	Then there exists $\varphi \in \cvx_0(\RR^n)$ such that $\grad \dual \varphi$  is an optimal transport map between $\mu$ and $\nu$, where 
 	\[ \grad \dual \varphi(x) = \{ y: \varphi(x) \A \varphi(y) = \iprod{x}{y}-1>0\}. \] 
 	In particular, for $\mu$-almost every $x$,  the set $\grad \dual \varphi(x)$ is a singleton. 
\end{manualtheorem}

	\begin{proof}
		By Theorem \ref{thm:transport-polar-comp}, we find a function $\varphi\in \cvx_0(\RR^n)$ such that there is an optimal plan $\pi$ concentrated on the graph of $\pa^p(-\log \varphi) = \pa^\circ \varphi$. We claim that $\mu$-almost everywhere, the set  $\pa^\circ \varphi(x)$ is a singleton, implying that $\pa^\circ \varphi$ is indeed a transport map. 
		
		Since $\pi$ is concentrated on $\pa^\circ \varphi$, the measure $\mu$ is concentrated on the projection of $\pa^\circ \varphi$, so in particular on the set of $x\in X$ with $\varphi(x)\neq 0, \infty$. We may also restrict to points in the interior of the domain of $\varphi$, as $\varphi$ is convex and points on the boundary of its domain have $\mu$-measure zero (using again that $\mu$ is absolutely continuous).
		We have that $\mu$-almost every point $x$ in the interior of the domain of $\varphi$ is a differentiability point of $\varphi$, and further that $\varphi$ does not vanish on $\mu$-almost every such point. Hence, by Lemma \ref{lemma: polargrad},  $\pa^\circ \varphi(x)$ is either a singleton or the empty set (in which case $x$ does not belong to the projection of $\pa^\circ \varphi$). We conclude that indeed $\pa^\circ \varphi(x)$ must be a singleton $\mu$-almost everywhere, as required.
	\end{proof}

\section{Decomposable pairs}\label{sec:decomposition}

We discussed in Section \ref{sec:c-compatibility} that when considering the transport problem of a measure $\mu\in {\mathcal P}(X)$ to $\nu \in {\mathcal P}(Y)$, with respect to a cost function $c:X\times Y\to (-\infty, \infty]$, where  $\mu$ and $\nu$ are $c$-compatible but not strongly $c$-compatible, the transport problem splits into two transport problems of disjointly supported measures. Let us make this observation more formal.

\begin{prop}\label{prop:decomposing}
Let $c:X\times Y\to (-\infty, \infty]$, and assume $\mu  \in \calP(X),\, \nu\in \calP(Y)$ are $c$-compatible measures which are not strongly $c$-compatible. There exists a $c$-class set $A\subset X$, and $B\subset Y$ such that $Y\setminus B$ is $c$-class, with $\mu(A) = \nu(B) \in (0,1)$,   such that, letting $\mu|_A$ and $\nu|_B$ denote the restricted measures, normalized, the pair $\mu|_A$ and $\nu|_B$ is $c$-compatible, as is the pair $\mu|_{X\setminus A}$ and $\nu|_{Y\setminus B}$. Moreover, 
any $\pi\in \Pi(\mu, \nu)$ which is concentrated in the set $S  =\{(x,y) : c(x,y) <\infty\}$, can be written as $\pi  = \mu(A) \pi_1 + (1-\mu(A)) \pi_2$, where $\pi_1 \in \Pi(\mu|_A, \nu|_B)$ and $\pi_2 \in \Pi(\mu|_{X\setminus A}, \nu|_{Y\setminus B})$, and $C(\mu, \nu) = \mu(A)C(\mu|_A, \nu|_B)+ (1-\mu(A))C(\mu|_{X\setminus A}, \nu|_{Y\setminus B})$. 
\end{prop}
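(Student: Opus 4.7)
The plan is to pick a witness set for the failure of strong $c$-compatibility, upgrade it to a $c$-class set by taking its $c$-envelope, and observe that any finite-cost plan is then forced to respect the resulting bipartition, because mass sitting in $A$ is forbidden (by the very definition of the $c$-dual) from moving to $A^c$.

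First I would use the hypothesis to fix a measurable $A_0 \subset X$ with $0 < \mu(A_0) < 1$ and $\mu(A_0) + \nu(A_0^c) = 1$, where $A_0^c$ denotes the $c$-dual with threshold $t = \infty$. Setting $A := A_0^{cc}$, Lemma~\ref{lem:c-dual properties} (i) and (iii) give $A_0 \subset A$ and $A^c = A_0^{ccc} = A_0^c$, so
\[
\mu(A) + \nu(A^c) \ge \mu(A_0) + \nu(A_0^c) = 1.
\]
The reverse inequality is just $c$-compatibility of $(\mu, \nu)$ applied to $A$, so equality holds, $\mu(A) = \mu(A_0) \in (0,1)$, and $A$ is $c$-class by construction. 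Put $B := Y \setminus A^c$; then $Y \setminus B = A^c$ is $c$-class and $\nu(B) = 1 - \nu(A^c) = \mu(A)$.

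Next, fix any $\pi \in \Pi(\mu, \nu)$ concentrated on $S = \{(x,y) : c(x,y) < \infty\}$. The key observation is that, by definition of the $c$-dual, $A^c = \{y : \forall x \in A,\ c(x,y) = \infty\}$, so $(A \times A^c) \cap S = \emptyset$ and hence $\pi(A \times A^c) = 0$. This forces $\pi(A \times B) = \pi(A \times Y) = \mu(A)$, and then $\pi((X \setminus A) \times B) = \nu(B) - \mu(A) = 0$. Consequently $\pi$ splits as $\pi = \mu(A)\, \pi_1 + (1-\mu(A))\, \pi_2$ with $\pi_1 \in \Pi(\mu|_A, \nu|_B)$ supported on $A \times B$ and $\pi_2 \in \Pi(\mu|_{X \setminus A}, \nu|_{Y \setminus B})$ supported on $(X \setminus A) \times (Y \setminus B)$ (after normalizing the restricted marginals to probability measures). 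Linearity of the integral gives the cost identity $\int c\, d\pi = \mu(A) \int c\, d\pi_1 + (1 - \mu(A)) \int c\, d\pi_2$ for any one finite-cost plan; conversely, any pair of finite-cost sub-plans glues back to an element of $\Pi(\mu, \nu)$ concentrated on $S$ via the same convex combination, so taking infima yields $C(\mu, \nu) = \mu(A) C(\mu|_A, \nu|_B) + (1 - \mu(A)) C(\mu|_{X \setminus A}, \nu|_{Y \setminus B})$.

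Finally, I would verify $c$-compatibility of the two sub-problems. For any measurable $A' \subset A$, let $D = \{y : \forall x \in A',\ c(x,y) = \infty\}$; since $A' \subset A$ we have $D \supset A^c = Y \setminus B$, whence $\nu(D) = \nu(D \cap B) + (1 - \mu(A))$. Substituting into the $c$-compatibility inequality $\mu(A') + \nu(D) \le 1$ and simplifying gives $\mu(A') + \nu(D \cap B) \le \mu(A) = \nu(B)$, which, after dividing by $\mu(A)$, is precisely $c$-compatibility of the normalized pair $(\mu|_A, \nu|_B)$. A symmetric argument (applied to subsets $B' \subset Y \setminus B$, using that $\{x : \forall y \in B',\ c(x,y) = \infty\} \supset A$) handles the complementary pair. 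The main obstacle is the very first step: the witness $A_0$ need not itself be $c$-class, and one must check that replacing it by its $c$-envelope preserves the defining equality; this is exactly what the identity $A_0^{ccc} = A_0^c$ from Lemma~\ref{lem:c-dual properties} (iii) delivers for free. Everything else is bookkeeping with $c$-duality and the finiteness set $S$.
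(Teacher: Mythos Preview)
Your proof is correct and follows essentially the same route as the paper's. The only cosmetic difference is that the paper invokes Lemma~\ref{lem:polar-compatible1} to obtain a $c$-class witness $A$ directly, whereas you start from an arbitrary witness $A_0$ and pass to its $c$-envelope $A=A_0^{cc}$ --- which is precisely the content of that lemma's proof; you also spell out the total-cost identity $C(\mu,\nu)=\mu(A)C(\mu|_A,\nu|_B)+(1-\mu(A))C(\mu|_{X\setminus A},\nu|_{Y\setminus B})$ via the gluing argument, which the paper leaves implicit.
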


\begin{proof}
Indeed, by	Lemma \ref{lem:polar-compatible1}, the fact that the measures are not strongly $c$-compatible  implies that there exists some set $A\subset X$, which is a $c$-class set (this means there is some $D\subset Y$, which can also be assumed to be a $c$-class set, such that $A = \{x: \forall y\in D,\, c(x,y) = \infty\}$), and such that 
$\mu(A) \in (0,1)$ and 
\[ \mu (A) + \nu(\{y: \forall x\in A,\,\, c(x,y) = \infty\}) = 1. \] 
	Rearranging, this means that 
	\[ \mu (A)  = \nu(\{y: \exists x\in A,\,\, c(x,y) <\infty\})\quad{\rm and}\quad \mu(X\setminus A) = \nu(\{y: \forall x\in A ,\,\, c(x,y) = \infty\}). \] 
	Let $B = \{y: \exists x\in A,\,\, c(x,y) <\infty\} = Y\setminus D$. 
To see that $\mu|_A$ and $\nu|_B$ are $c$-compatible, 	letting $\mu(A) = a$, say, and fixing some set $A'\subset A$, we see that 
\begin{eqnarray*}
	&& \mu(A') + \nu(\{y\in B: \forall x\in A' ,\,\,c(x,y) = \infty\})  = \\
	&& \mu(A') + \nu(\{y\in Y: \forall x\in A',\,\, c(x,y)  =  \infty\}) -\nu(Y\setminus B)  \le  1-(1-a) = a,
\end{eqnarray*}
as required. Similarly for the complementary measures. 
If a transport plan $\pi \in \Pi(\mu, \nu)$ is concentrated on $S$, then $\pi$ cannot have non-zero measure in $A\times (Y\setminus B)$ or in $(X\setminus A)\times B$. 
Indeed, as 
	\[\mu(A) = \pi((A\times Y)\cap S)= \pi(A\times B)\le \nu(B) = \mu(A)\]
implying that we have equalities all along, and $\pi (A\times (Y\setminus B)) = 0$. Similarly, as $Y\setminus B = D$ is a $c$-class set, $D = \{ y: \forall x\in A,\, c(x,y) = \infty \}$, so $D$ must be mapped to $X\setminus A$, and as these sets have the same measure, 
\[ \nu(D) = \pi((X\times D) \cap S) = \pi ((X\setminus A) \times D) \le \mu(X\setminus A) = \nu(D),  \]
and by the same reasoning,  $\pi ((A\setminus X)\times B) = 0$. 
(Figure \ref{fig:example35} is a good illustration of this event.)

In other words, such a  	
 transport plan can be split into its components, $\pi_1 = \pi|_{A\times B}$ and $\pi_2 = \pi|_{(X\setminus A) \times (Y\setminus B)}$ (where as above, restriction means to restrict, and renormalize to a probability measure). 
  This completes the proof.  
\end{proof}

Of course, the fact that the problem splits into two sub-problems does not necessarily imply we may solve it in a satisfactory way. Indeed, it may be the case that each sub-problem has an associated potential function, but these two functions cannot be ``glued'' so as to form a potential for the original problem. This is the case for example for the polar cost in the following example

\begin{exm}\label{ex:no potential}
Consider the set \[
A= \{ (x,y): \ x \in (\tfrac{1}{2},1), \ y=3-2x \} \cup \{ (x,y): \ x \in (1,2), \ y=\tfrac{3}{2}-\tfrac{1}{2}x \} \subset \R^+ \times \R^+.\]
The set is a $p$-cyclically monotone (with respect to the polar cost $p(x,y) = -\ln(\iprod{x}{y}-1)$   since for every point $(x,y)\in A$ we have $\sp{x,y}>1$ and it is a graph of non-increasing function on its domain, which characterized $p$-cyclically monotone sets on the ray $\RR^+$, see \cite{paper1}. 
However, the set is not $p$-path-bounded, and thus admits no potential. 

Next, consider the measure $\mu = \nu$ on $[1/2, 2]$ with density $1$ on $[1/2, 1]$ and density $1/2$ on $[1,2]$. This is a probability measure. In fact, $\mu$ and $\nu$ are $p$-compatible as the normalized uniform measure on the set $A$ constitutes a plan $\pi \in \Pi(\mu, \nu)$. However, they are not strongly $p$-compatible since the set $A = [1/2, 1]$ must be mapped to $B = [1, 2]$ and vice versa. 

In this case we see the splitting very clearly, and indeed $A$ is written as the union of two sets, each of which admits a potential (so, in particular, each is $c$-path-bounded, and is an optimal plan between the corresponding restricted measures). However, there is no potential for the full set $A$, as it is not $c$-path-bounded, and in particular no ``gluing'' of the two potentials is possible. 
 \end{exm}

\appendix
\addcontentsline{toc}{section}{Appendices}

\section{$c$-subgradients and polar subgradients}\label{appendix:c-subg}

Since $c$-subgradients play such an important role in this theory, we gather here some relevant information regarding them but which we did not include in the main text so as not to disturb its flow.

Let us recall that given a function $\varphi$ in the $c$-class, its $c$-subgradient is defined by 
\begin{equation*}
	\pa^c\varphi =\{(x,y):\, \varphi(x)+\varphi^c(y)=c(x,y) \,\text{ and } \, c(x,y)<\infty\}.
\end{equation*}

Denoting by $\pa^c\varphi(x)$ the set of points $y\in Y$ for which $(x,y)\in \pa^c \varphi$,  we have by definition that 
\[x\in \pa^c\varphi^c(y)\iff y\in \pa^c\varphi(x).\]
Notice that $y\in \pa^c\varphi(x)$ if and only if the function $c(\cdot,y)-\varphi^c(y)$ is above $\varphi$ and coincides with it at $x$. This provides the first simple but useful way to think about $c$-subgradients, summarized in Lemma \ref{lem:c-sub-geom}. 
Given a function $\varphi$ in the $c$-class, it is the image, under the $c$-transform, of another $c$-class function $\psi = \varphi^c$ and therefore, it can be written as an infimum over basic functions as follows:
\[ \varphi (x) = \inf_y \left(c(x,y) - \varphi^c(y)\right).\]
All the functions on the right hand side lie above $\varphi$. 
If any one of the basic functions (indexed by $y$)  on the right hand side is tangent to $\varphi$ at the point $x$, then the pair $(x,y)$ belongs to $\partial^c \varphi$, and $y \in \partial^c \varphi (x)$.  In other words 

\begin{lem}\label{lem:c-sub-geom}
	Let $\varphi$ be a $c$-class function, and $x\in X$ and assume that $\v(x)<\infty$. Then
	$y_0\in \partial ^c \v(x)$ if and only if  $c(x,y_0)<\infty$ and 
	the function $\ell(z)=c(z,y_0)-c(x,y_0)+\varphi(x)$ satisfies \[\ell(z)\ge \varphi(z) \ \text{ for all } \ z\in X.\]	
\end{lem}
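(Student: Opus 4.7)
The plan is to prove this lemma by direct unpacking of the definitions of the $c$-subgradient and the $c$-transform, proving each direction separately. The key observation driving both directions is that the condition $\ell(z) \ge \varphi(z)$ for all $z$ can be rewritten as $c(z,y_0) - \varphi(z) \ge c(x,y_0) - \varphi(x)$ for all $z$, which is precisely the statement that $x$ realizes the infimum in the definition of $\varphi^c(y_0)$.

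For the forward direction, I would assume $y_0 \in \partial^c \varphi(x)$, which by definition means $c(x,y_0) < \infty$ and $\varphi(x) + \varphi^c(y_0) = c(x,y_0)$. Rearranging gives $\varphi^c(y_0) = c(x,y_0) - \varphi(x)$. Since $\varphi^c(y_0) = \inf_z (c(z,y_0) - \varphi(z))$, for any $z \in X$ we have $c(x,y_0) - \varphi(x) = \varphi^c(y_0) \le c(z,y_0) - \varphi(z)$, which rearranges to $\varphi(z) \le c(z,y_0) - c(x,y_0) + \varphi(x) = \ell(z)$, as desired.

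For the reverse direction, I would assume $c(x,y_0) < \infty$ and that $\ell(z) \ge \varphi(z)$ holds for all $z \in X$. Rearranging this inequality gives $c(z,y_0) - \varphi(z) \ge c(x,y_0) - \varphi(x)$ for all $z$. Taking the infimum on the left over all $z \in X$ yields $\varphi^c(y_0) \ge c(x,y_0) - \varphi(x)$. On the other hand, substituting $z = x$ directly into the definition of $\varphi^c$ gives $\varphi^c(y_0) \le c(x,y_0) - \varphi(x)$. We thus obtain equality $\varphi^c(y_0) = c(x,y_0) - \varphi(x)$, which together with $c(x,y_0)<\infty$ is precisely the defining condition $y_0 \in \partial^c \varphi(x)$.

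There is no serious obstacle; the only minor subtlety worth flagging is the handling of infinities when manipulating the inequalities. Since we are assuming both $\varphi(x) < \infty$ and $c(x,y_0) < \infty$ in the hypothesis, the quantity $c(x,y_0) - \varphi(x)$ is a well-defined real number (or $-\infty$ if $\varphi(x)$ is allowed to be, but this is excluded), and the rearrangements above are unambiguous. The statement $\ell(z) \ge \varphi(z)$ is interpreted in the obvious way when $c(z,y_0) = \infty$ (the inequality is trivially satisfied), which is consistent with the convention that a basic function can take the value $+\infty$ outside its effective domain.
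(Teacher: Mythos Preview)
Your proof is correct and follows essentially the same approach as the paper's: both unpack the definitions of $\partial^c\varphi$ and $\varphi^c$ to recognize that the condition $\ell(z)\ge\varphi(z)$ is equivalent to $x$ attaining the infimum defining $\varphi^c(y_0)$. The only cosmetic difference is that the paper compresses the argument into a single chain of equivalences, while you write out the two directions separately and explicitly check both inequalities for $\varphi^c(y_0)$ in the reverse direction.
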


\begin{proof}
	By the definition we have that $y_0\in \partial^c \varphi (x)$ if and only if \[\varphi(x)+ \varphi^c(y_0)=c(x,y_0)<\infty.\] Using the definition of the $c$-transform we see that
	\[\varphi(x) = c(x,y_0)-\varphi^c (y_0)= \sup_z(c(x,y_0)-c(z,y_0)+\varphi(z)),\]
	which holds if and only if for all $z$ we have $c(z,y_0)-c(x,y_0)+\varphi(x) \ge \varphi(z)$.
\end{proof}

It is useful to understand the structure of the $c$-subgradient of the basic functions. In parallel to the classical case, where the linear functions have constant subgradient, we show that under mild assumptions the same is true for $c$-subgradients of basic functions. This was, of course, our motivation for using the specific candidates for the potential functions in Section \ref{sec:discrete-pf}.

\begin{lem}\label{lem:subgrad-of-basic-functions}
	Let $X,\,Y$ be  measure spaces and let  $c:X\times Y \to (-\infty, \infty]$ be a measurable cost function. Consider a basic function $\varphi(x) = c(x,y_0)+ t$ for some $y_0 \in Y$. If $c(x,y_0)<\infty$, 
	then $y_0\in \cgrad \v(x)$. If, in addition, for any $y_1\neq y_0$ we have that $\inf_z\left( c(z,y_1) - c(z,y_0)\right)$ is not attained at $x$ (for example, if the infimum is $-\infty$, or bounded but not attained at all) then $\{y_0 \}= \cgrad \v(x)$. 
\end{lem}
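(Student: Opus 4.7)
The lemma has two parts, and both follow by direct unwinding of the definition of $c$-subgradient together with Lemma \ref{lem:c-sub-geom}. The plan is to compute $\varphi^c(y_0)$ explicitly for the first part, and for the second part, to show that any candidate $y_1 \in \partial^c \varphi(x)$ forces the infimum $\inf_z(c(z,y_1) - c(z,y_0))$ to be attained at $x$.

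For the first assertion, I would observe that for $\varphi(x) = c(x,y_0) + t$,
\[
\varphi^c(y_0) = \inf_{z \in X}\bigl(c(z,y_0) - \varphi(z)\bigr) = \inf_{z \in X}\bigl(c(z,y_0) - c(z,y_0) - t\bigr) = -t.
\]
Therefore, provided $c(x,y_0) < \infty$,
\[
\varphi(x) + \varphi^c(y_0) = c(x,y_0) + t - t = c(x,y_0),
\]
which is precisely the condition in \eqref{eq:c-subgradient} for $y_0 \in \partial^c \varphi(x)$.

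For the second assertion, I would argue by contradiction: suppose $y_1 \ne y_0$ with $y_1 \in \partial^c \varphi(x)$. By Lemma \ref{lem:c-sub-geom}, the function $\ell(z) = c(z,y_1) - c(x,y_1) + \varphi(x)$ lies everywhere above $\varphi$, so for every $z \in X$,
\[
c(z,y_1) - c(x,y_1) + c(x,y_0) + t \;\ge\; c(z,y_0) + t.
\]
Rearranging, and using that $c(x,y_0), c(x,y_1) < \infty$ (the latter since $y_1 \in \partial^c\varphi(x)$ forces $c(x,y_1) < \infty$), one gets
\[
c(z,y_1) - c(z,y_0) \;\ge\; c(x,y_1) - c(x,y_0) \qquad \text{for all } z \in X,
\]
with equality at $z = x$. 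Hence $\inf_z(c(z,y_1) - c(z,y_0))$ is attained at $x$, contradicting the hypothesis. This forces $\partial^c \varphi(x) = \{y_0\}$.

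The argument is essentially bookkeeping, and I do not foresee a real obstacle; the only mild care needed is handling the possibility that $c(z,y_1) = +\infty$ for some $z$ (in which case the inequality is trivially valid) and the convention on subtracting infinite values, but since only finite values of $c(x,y_0)$ and $c(x,y_1)$ appear on the right-hand side where equality is attained, this causes no issue.
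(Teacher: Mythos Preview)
Your proof is correct and follows essentially the same approach as the paper's: both unwind the defining equation $\varphi(x)+\varphi^c(y)=c(x,y)$ and plug in $\varphi(z)=c(z,y_0)+t$ to arrive at the inequality $c(z,y_1)-c(z,y_0)\ge c(x,y_1)-c(x,y_0)$ for all $z$. The only cosmetic difference is that the paper treats both assertions in one stroke via this inequality (observing that $y=y_0$ gives equality trivially), whereas you separately compute $\varphi^c(y_0)=-t$ for the first part and invoke Lemma~\ref{lem:c-sub-geom} for the second.
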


\begin{proof}
	Indeed, let $\v$ be as in the statement. From the definition it follows that $y\in \cgrad \v(x)$ if and only if $c(x,y)<\infty$ and
	\[c(x,y)-\v(x)=\v^c(y) =\inf_{z} ( c(z,y)-\v(z)), \]
	which can be reformulated as
	\[c(x,y)-\v(x) \le c(z,y_1)-\v(z) \ \ \ \text{ for all } z\in X.\]
	Plugging in the definition of $\v$ we get
	\[c(x,y)-c(x,y_0)\le c(z,y)-c(z,y_0) \ \ \ \text{ for all } z\in X.
	\]
	We see that $y=y_0$ always satisfies the equality, so that $y_0\in \partial^c \v(x)$.
	Clearly for $y_1\neq y_0$, such an inequality means precisely that the infimum is attained at $x$. 
\end{proof}

An important and motivating first example is the one coming from the clasical cost function $c(x,y) = -\iprod{x}{y}$. 
 
 \begin{exm}
 	For the cost function $c(x,y)=-\iprod{x}{y}$,  
 	whose transport plans and maps coincide with those associated to the quadratic cost, the $c$-subgradient coincides, up to a minus sign,  with the well known subgradient. More formally, a function $\varphi$ is in the $c$-class if and only if $-\varphi\in \cvx(\RR^n)$, namely is convex and lower semi-continuous. Denoting $\psi = -\varphi$ and 
 	 using the definition of the $c$-transform we see that $(x,y)\in \partial ^c \varphi$ if and only if  for all $z\in X$ we have
 	\[\psi(z) - \psi(x) = \v(x)-\v(z)\ge  c(x,y)-c(z,y).\]
 	Plugging in the quadratic cost we indeed get that $y\in \partial^c \v(x)$ if for all $z$ it holds that $ \psi(x) + \iprod{z-x}{y} \le \psi(z)$, namely $y\in \partial \psi(x)$.

 \end{exm} 

The second motivating example, which is our main point of interest, is that of the polar cost $p:\RR^n \times \RR^n \to (-\infty, \infty]$, which we once again recall 
\begin{equation*}
p(x,y) =- \ln(\langle x,y \rangle -1)_+ = \begin{cases}
- \ln(\langle x,y \rangle -1), & \ \ \text{if } \langle x,y \rangle >1 \\
+\infty, & \ \ \text{otherwise.}
\end{cases} 
\end{equation*}  

It was shown in  \cite{hila} that
for the polar cost the $p$-class consists of all functions of the form $-\ln(\varphi)$, where $\varphi$ is a geometric convex function, that is, a lower semi-continuous ~non-negative convex function with $\v(0)=0$. The associated cost transform is linked with the $\A$-transform defined in \cite{ArtsteinMilmanHidden} and given by
\begin{equation}\label{def:A-transform}
\A\varphi(y)= \sup_{\{x:\, \sp{x,y}>1\}} \frac{\sp{x,y}-1}{\varphi(x)}.
\end{equation}

More precisely, one may easily verify that $- \ln(\A \v)=(-\ln(\v))^p$. Further, the $p$-subgradient of the function $-\ln(\varphi)$ can be rewritten as the polar subgradient $\partial^\circ$, introduced in \cite{ArtsteinRubinstein}, of the function $\varphi\in \cvx_0(\R^n)$. Indeed, we have that
\begin{align}\label{def:polar-subgrad}
\partial^p(-\ln(\varphi))= \partial^\circ\varphi= \{(x,y): \varphi(x) \A \varphi (y) = \sp{x,y}-1>0\}.
\end{align}
 This convenient form is a reason for us to sometimes consider a ``multiplicative'' setting, where the basic functions are of the form
\[ \varphi_{u,t}(x) =  t (\iprod{x}{u }-1)_+.\]

The next lemma, which is a version of \cite[Lemma 3.3]{ArtsteinRubinstein}, describes the connection between the polar subgradient and the classical subgradient. We will use the following notation for the zero set $Z_\varphi = \{ x: \varphi(x) = 0\}$ and ${\rm dom}( \varphi)  = \{x: \varphi (x) <\infty\}$ for the domain where $\varphi$ is finite.

\begin{lem}\label{lemma: polargrad} 
Let $\varphi \in \cvx_0 (\RR^n)$ and let $x\in { \rm dom}(\varphi)\setminus Z_\varphi$. Then \begin{enumerate}[(i)]
    \item for any $z\in \partial \varphi (x)$ such that  $\sp{x,z} \neq \varphi(x)$, we have that 
	$y=\frac{z}{ \iprod{x}{z}  - \varphi(x)} \in \partial^\circ \varphi(x)$,
	\item for any $y \in \partial^\circ \varphi (x)$ there exists some $z\in \partial \varphi (x)$ such that  $\iprod{x}{z} \neq \varphi(x)$ and such that $y=\frac{z}{ \iprod{x}{z}  - \varphi(x)}$. 
\end{enumerate}
\end{lem}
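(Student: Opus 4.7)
The plan is to verify both parts by direct algebraic manipulation, using the explicit candidate formulas given in the statement and exploiting the subgradient inequality (respectively the supremum characterization of $\A\varphi$) to produce the needed identities and bounds. Throughout, let $\varphi \in \cvx_0(\RR^n)$ with $x \in \dom(\varphi)\setminus Z_\varphi$, so $\varphi(x)\in(0,\infty)$.

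For part (i), given $z\in\partial\varphi(x)$ with $\iprod{x}{z}\neq\varphi(x)$, I would first extract the crucial sign information: evaluating the subgradient inequality $\varphi(w)\geq\varphi(x)+\iprod{w-x}{z}$ at $w=0$ and using $\varphi(0)=0$ yields $\iprod{x}{z}\geq\varphi(x)$, so the hypothesis forces $\iprod{x}{z}>\varphi(x)$ and the denominator in $y=z/(\iprod{x}{z}-\varphi(x))$ is strictly positive. A direct computation then gives
\[
\iprod{x}{y}-1 \;=\; \frac{\varphi(x)}{\iprod{x}{z}-\varphi(x)} \;>\;0.
\]
It remains to show $\varphi(x)\A\varphi(y)$ equals this same quantity. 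Substituting into the definition of $\A\varphi(y)$ and applying the subgradient inequality $\varphi(w)\geq \iprod{w}{z}-\iprod{x}{z}+\varphi(x)$ (valid for all $w$) to bound the denominator, I would obtain
\[
\frac{\iprod{w}{y}-1}{\varphi(w)} \;\le\; \frac{1}{\iprod{x}{z}-\varphi(x)}
\]
for every $w$ with $\iprod{w}{y}>1$, and check that equality is attained at $w=x$. This identifies $\A\varphi(y)=1/(\iprod{x}{z}-\varphi(x))$, so $\varphi(x)\A\varphi(y)=\iprod{x}{y}-1>0$, which is precisely membership in $\partial^\circ\varphi(x)$.

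For part (ii), given $y\in\partial^\circ\varphi(x)$, the defining equality $\varphi(x)\A\varphi(y)=\iprod{x}{y}-1>0$ means that $x$ realizes the supremum in the definition of $\A\varphi(y)$ and that $\iprod{x}{y}>1$. Inverting the formula of part (i) suggests the candidate
\[
z \;=\; \frac{\varphi(x)\, y}{\iprod{x}{y}-1}.
\]
A short computation shows $\iprod{x}{z}-\varphi(x)=\varphi(x)/(\iprod{x}{y}-1)>0$, so in particular $\iprod{x}{z}\neq\varphi(x)$ and one recovers $z/(\iprod{x}{z}-\varphi(x))=y$. The remaining task is to verify $z\in\partial\varphi(x)$. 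Rewriting the candidate affine minorant, this reduces to showing
\[
\varphi(w)\;\geq\;\varphi(x)\,\frac{\iprod{w}{y}-1}{\iprod{x}{y}-1}\qquad\text{for all }w\in\RR^n.
\]
For $w$ with $\iprod{w}{y}>1$ this is exactly the content of $\A\varphi(y)=(\iprod{x}{y}-1)/\varphi(x)$ being the supremum, while for $w$ with $\iprod{w}{y}\le 1$ the right hand side is nonpositive and the inequality follows from $\varphi\geq 0$.

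The proof is essentially computational, so the main obstacle is just bookkeeping: making sure the denominators are nonzero, that all relevant points lie in $\dom(\varphi)\setminus Z_\varphi$ where division by $\varphi$ makes sense, and that the two-sided verification is tight. No topological or compactness arguments appear to be needed, since everything follows from the subgradient inequality in one direction and from the attainment of the supremum defining $\A\varphi(y)$ in the other.
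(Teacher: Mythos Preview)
Your proposal is correct and follows essentially the same route as the paper's proof: both parts proceed by the same candidate formulas and the same reduction to the subgradient inequality (for (i)) and the supremum characterization of $\A\varphi$ (for (ii)). The only difference is organizational: in (ii) the paper singles out the case $w\in Z_\varphi$ explicitly (using $\dom(\A\varphi)\subset Z_\varphi^\circ$), whereas your two-case split on the sign of $\iprod{w}{y}-1$ absorbs this implicitly via the finiteness of $\A\varphi(y)$---you may want to make that one line explicit, but the argument is sound.
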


 \begin{proof} 
 \textit{(i)} Let $z\in \partial \varphi (x)$ with  $\sp{x,z} \neq \varphi(x)$, which means that for every $w$ we have $\iprod{w}{z}-\varphi(w)\le \iprod{x}{z}-\varphi(x)$. In particular, %
 $\sp{x,z}-\v(x)> 0$. Hence, letting $y=\frac{z}{ \iprod{x}{z}  - \varphi(x)}$ we have that $\sp{x,y}>1$.
 
 To show that $y \in \partial^\circ \varphi(x)$, it remains to show that $ \varphi(x)\A  \varphi (y)  = \iprod{x}{y}-1$. According to the definition of $\A$, this holds if for every  $w$ with $\iprod{w}{y}>1$ and $\varphi(w)>0$, we have 
 \[  \frac{\iprod{w}{y}-1}{\varphi(w)}\le \frac{\iprod{x}{y}-1}{\varphi(x)}. \] 
 Plugging in $y$ and rearranging gives 
 \[  \frac{\iprod{w}{\frac{z}{ \iprod{x}{z}  - \varphi(x)}}-1}{\varphi(w)}\le \frac{\iprod{x}{\frac{z}{ \iprod{x}{z}  - \varphi(x)}}-1}{\varphi(x)} =  {\frac{1}{ \iprod{x}{z}  - \varphi(x)}}. \]
Using that   $\iprod{x}{z}-\varphi(x)> 0$, the above inequality is equivalent to our initial assumption $\iprod{w}{z}-\varphi(w)\le \iprod{x}{z}-\varphi(x)$.

 	\noindent \textit{(ii)} Given $y \in \partial^\circ \varphi (x)$ it follows from the definition that $\iprod{x}{y}>1$. Consider 
 	\[z = \frac{ y \varphi(x)}{\iprod{y}{x}-1},\] 
 	which is well defined, and also implies that 
 	$y=\frac{z}{ \iprod{x}{z}  - \varphi(x)}$. We need to show that $z\in \partial \varphi(x)$ and  $\iprod{z}{x}\neq \varphi(x)$.

 	The latter follows easily since $\sp{z,x}=\v(x)\big(1+\tfrac{1}{\sp{x,y}-1}\big) $ and once again that $\sp{x,y}>1$. For the former, we use as before that if $y\in \partial^\circ \varphi (x)$ then for any $w$ with $\iprod{w}{y}>1$ and $\varphi(w) >0$ we have $ \frac{\iprod{w}{y}-1}{\varphi(w)}\le \frac{\iprod{x}{y}-1}{\varphi(x)}$.  
Plugging in $y$ and rearranging, we get that \[\varphi(x) + \iprod{w-x}{z} \le \varphi(w)\]
holds for any $w$ such that $\iprod{w}{z}>\iprod{x}{z}-\varphi(x)$ and $\varphi(w)>0$. In the case when $w$ is such that 
 	$\iprod{w}{z}\le \iprod{x}{z}-\varphi(x)$, this actually means that $	\varphi(x) + \iprod{w-x}{z} \le 0$ 
 and since the geometric convex functions are non-negative the desired inequality trivially follows.
 
 It remains to consider the case when $w\in Z_\v$, i.e. when $\varphi(w)=0$. Then, plugging in the previously defined $z$, we have that the inequality defining the subgradient of $\v$ at $x$ becomes simply
 \[ \sp{w,y} \le 1.\]
 
  That is, we need to show that $\partial^\circ \varphi (x)$
is contained in the polar set of $Z_\varphi$. Indeed, $y\in \partial^\circ \varphi(x)$ implies in particular that $y \in \dom(\A\varphi)$ (since the value of $\A \varphi(y) = \frac{\iprod{x}{y}-1}{\varphi(x)} <\infty$) and it follows from the definition of $\A$ that $\dom (\A\varphi)\subset Z_\varphi^\circ$, which completes the proof. %
 \end{proof}

 We end this appendix with one explicit example of 
 a function and its $p$-subgradient. More examples and applications can be found in \cite{hila,kasia-thesis} and in the forthcoming \cite{paper3}.
 
\begin{exm}
	Let $\varphi(x) = |x|^2/2$, in which case ${\A}\varphi(y)= |y|^2/2$ and the supremum in the definition of $\A \varphi$ is satisfied for $x = 2y/|y|^2$.  Hence,  $\partial^\circ \varphi(x) = \frac{2x}{|x|^2}$.
	Note that the mapping $x\mapsto \partial^\circ \varphi(x)$ in this case is a (rescaled) spherical inversion. 
\end{exm}

\bibliographystyle{amsplain}
\addcontentsline{toc}{section}{References}\bibliography{Constructing-potentials}

\smallskip \noindent
School of Mathematical Sciences, Tel Aviv University, Tel Aviv 69978, Israel  
\smallskip \noindent

{\it e-mail}: shiri@tauex.tau.ac.il\\
{\it e-mail}: shaysadovsky@mail.tau.ac.il\\
{\it e-mail}: kasiawycz@outlook.com\\

\end{document}